\newtheorem{thm}{Theorem}[section]
\newtheorem{prop}[thm]{Proposition}
\newtheorem{lem}[thm]{Lemma}
\newtheorem{cor}[thm]{Corollary}
\theoremstyle{definition}
\newtheorem{definition}[thm]{Definition}
\theoremstyle{remark}
\newtheorem{remark}[thm]{Remark}
\newtheorem{example}[thm]{Example}
\numberwithin{equation}{section}
\newcommand{\wt}{\widetilde}
\begin{document}

%%
%% The title of the paper goes here.  Edit to your title.
%%

\title[Cuntz-Krieger relations and Bratteli diagrams]{Representations of Cuntz-Krieger relations, dynamics on Bratteli diagrams, and path-space measures}

%%
%% Now edit the following to give your name and address:
%%

\author{Sergey Bezuglyi}
\address{Department of Mathematics, Institute for Low Temperature Physics, Kharkiv, Ukraine.
\textit{Current address: Department of Mathematics, University of Iowa,
Iowa City, IA 52442}}
\email{bezuglyi@gmail.com}
%\urladdr{www.math.sc.edu/$\sim$howard} % Delete if not wanted.

%%
%% If there is another author uncomment and edit the following.
%%

\author{Palle E.T. Jorgensen}
\address{Department of Mathematics,  University of Iowa,
Iowa City, IA 52442}
\email{palle-jorgensen@uiowa.edu}
%\urladdr{www.math.sc.edu/$\sim$second}

%%
%% If there are three of more authors they are added in the obvious
%% way.
%%

%%%
%%% The following is for the abstract.  The abstract is optional and
%%% if not used just delete, or comment out, the following.
%%%

\begin{abstract}
We study a new class of representations of the Cuntz-Krieger algebras $\mathcal O_A$ constructed by semibranching function systems, naturally related to stationary Bratteli diagrams. The notion of isomorphic semibranching function systems is defined and studied. We show that any isomorphism of such systems  implies  the equivalence of the corresponding representations of Cuntz-Krieger algebra $\mathcal O_A$. In particular, we show that equivalent measures generate equivalent representations of $\mathcal O_A$. We use Markov measures which are defined  on the path space of stationary Bratteli  diagrams to construct isomorphic representations of $\mathcal O_A$.  To do this, we  associate a (strongly) directed graph to a stationary (simple) Bratteli diagram, and show that isomorphic  graphs generate isomorphic semibranching function systems. We also consider a class of monic representations of the Cuntz-Krieger algebras, and we classify them up to unitary equivalence. Several examples that illustrate the results are included in the paper.

\end{abstract}

%%
%%  LaTeX will not make the title for the paper unless told to do so.
%%  This is done by uncommenting the following.
%%

\maketitle

%%
%% LaTeX can automatically make a table of contents.  This is done by
%% uncommenting the following:
%%

%\tableofcontents
\section{Introduction}\label{intro}

Our paper is at the cross-roads of non-commutative and commutative harmonic analysis. The non-commutative component deals with representations of a class of $C^*$-algebras, called the Cuntz-Krieger algebras, written $\mathcal O_A$, and indexed by a matrix $A$ ; it is a matrix over the integers, called the incidence matrix. As the matrix  $A$  varies, so does (i) the Bratteli diagram, (ii) the $C^*$-algebra, and (iii) the associated compact measure space.
      The representations of $\mathcal O_A$  that we study, and find, are motivated by, and have applications to, an important commutative problem; the problem of understanding ``dynamics on infinite path-space measures''. The latter dynamical systems are called Bratteli diagrams, although this may be a stretch.
      A Bratteli diagram is a certain discrete graph (see details in the paper), but there is a dual object which is a compact space, from which we get a family of measure-space dynamical systems. It is the latter we study here. We add that the afore mentioned duality, i.e., discrete vs compact, generalizes Pontryagin's duality for Abelian groups: The Pontryagin-dual of a discrete abelian group, is a compact Abelian group, and vice versa. But we study cases here which go beyond the category of groups.
     Both our non-commutative questions about representations of $\mathcal O_A$, and the associated Abelian questions about equivalence of measure spaces, are quite subtle. Our overall plan is to prove theorems on representations of the $\mathcal O_A$  $C^*$-algebras, and then to apply them to classifications of dynamics on this family of compact path-space measure dynamical systems.

\subsection{ Motivation and earlier papers}    Our main theme is the question of deciding equivalence of path-space dynamical systems on Bratteli diagrams  $X_B$.
We note that Bratteli diagrams in a variety of guises have found applications to such areas of analysis, both commutative, and non-commutative, as representations of groups, classification of approximately finite dimensional (AF) $C^*$-algebras, and algebraic $K$-theory. To give a sample, we mention the following papers \cite{BrJKR00}, \cite{BrJKR02a}, \cite{BrJKR02b}, \cite{BrJO2004},  \cite{Ell89}, \cite{EllMu93} and the works cited there. In these papers, all three versions of the diagrams are used: (1) graphs (vertices, edges), (2) ordered discrete abelian groups (representing $K_0$ and $K_1$ groups), and (3) compact abelian groups (their duals).

In the present paper, we study equivalence of pairs of measures  $\mu$   arising in this class of dynamical systems (symbolic dynamics). Our answers are presented in terms of the representations of certain Cuntz-Krieger (CK) algebras \cite{CuKr80} used in generating the particular system. The CK-algebras are $C^*$-algebras, but our focus is on their representations in Hilbert spaces of the form $L^2(X_B, \mu)$ where the measures  $\mu$ are as described.  Departing from more traditional dynamical systems, and stochastic processes, we study here systems derived from one-sided endomorphisms, and our measures are typically not Gaussian.

In our setting, we study measures on one-sided infinite paths  $X_B$;  i.e., paths represented by infinite words, and with the alphabet in turn represented by vertices in the given Bratteli diagram $B$. For a given infinite word, the choice of letter at place $n$  is from a set of vertices $V_n$  of $B$; so with the vertex set depending on the given Bratteli diagram. In the special case of the Cuntz algebra $\mathcal O_N$ ($N$ fixed), \cite{Cu77, Cu79}, we may use the same alphabet $\mathbb Z_N$ (the cyclic group of order $N$), at each place in an infinite word.  In this case $X_B$ is a Cantor group. The case of $\mathcal O_N$ has been studied in recent papers \cite{DJ14a, DJ14b}.  In both cases, Cuntz, and Cuntz-Krieger, by a representation we mean an assignment of isometries  in a Hilbert space  $H$, assigning to every letter from the finite alphabet an isometry, and assigned  in such a way that distinct isometries have orthogonal ranges, adding up to the identity operator in $H$, in the case of $\mathcal O_N$. By contrast, in the case of the Cuntz-Krieger algebras, the corresponding sum-relation depends on a choice of incidence matrix $A$, the matrix defining the CK-algebra under consideration, see (\ref{CK relations}) in subsection 2.5 below.

This particular family of representations is motivated in part by quantization of particle systems.  Now the �quantized� system must be realized as an $L^2$-space with respect to a suitable measure on some path-space.  Here we study the case of path-space dynamical systems $X_B$  on Bratteli diagrams $B$. Because of this particular setting, it is not be possible to use of more traditional Gaussian measures (on infinite product spaces); and as an alternative we suggest a family of quasi-stationary Markov measures. In each of our symbolic representations we use one-sided shift to the left; and a system of a finite number of branches (vertices) defining shift to the right, shifting to the right, and filling in a letter from the alphabet. As a result, we get Markov measures  on $X_B$, see e.g., \cite{Ak12, JP12}.

When associated representations of the Cuntz-Krieger algebras are brought to bear, we arrive at useful non-commutative versions of these (commutative) symbolic shift mappings. For earlier related work, see e.g., \cite{DPS14, DJ12, Sk97, GPS95}.

In more traditional instances of the dynamics problem in its commutative and non-commutative guise, see e.g.,  \cite{BrRo81, Hi80} , there are three typical methods of attacking the question of equivalence or singularity of two measures: (1) Kakutani's theorem for infinite-product measures \cite{Ka48}, which asserts that two infinite product measures are either equivalent or mutually singular, (2)  methods based  on entropy considerations, and (3) the method of using the theory of reproducing kernels; see \cite{Hi80}. Because of the nature of our setting, one-sided shifts, commutative and non-commutative, we must depart from the setting of Gaussian measures. As a result, in our present study of equivalence or singularity of two measures, vs equivalence (or disjointness) of representations, only ideas from (1) seem to be applicable here.

\subsection{Representations of the Cuntz-Krieger algebras} A Cuntz-Krieger algebra $\mathcal O_A$ is a $C^*$-algebra on generators and relations as in (\ref{CK relations}) and depending on a fixed 0-1 matrix $A$.
Recently there has been an increased interest in use of the Cuntz-Krieger algebras and their representations in dynamics (including the study of fractals, and geometric measure theory), in ergodic theory, and in quantization questions from physics. Perhaps this is not surprising since Cuntz, and Cuntz-Krieger algebras are infinite algebras on a finite number of generators, and defined from certain relations. By their nature, these representations reflect intrinsic self-similar inherent in the problem at hand; and thus they serve ideally to encode iterated function systems (IFSs), their dynamics, and their measures. The study of representations of operator algebras related to Bratteli diagrams have found increasing use in pure and applied problems, such as physics, wavelets, fractals, and signals.

Our paper is partitioned in five sections. In Section 2, we collect all necessary definitions and prove some auxiliary results that are used in the paper. More precisely, we first recall the concept of a Bratteli diagram, focusing mostly on the case of a stationary Bratteli diagram. The structure of the path space of a diagram allows considering an analogue of Markov measures. We show in detail how such measures can be constructed. Combinatorial properties of stationary (simple) Bratteli diagrams allow us to define (strongly) directed graphs (we call them coupled graphs) that are  naturally associated to  such  diagrams. It turns out that properties of stationary diagrams can be translated into properties of the corresponding graphs and vice versa. In this section, we also consider one of the main ingredients of our methods, this is the notion of semibranching function systems (s.f.s.) defined on a measure space $(X, \mu)$. It turns out that one can define at least two s.f.s. acting on the path space of a stationary Bratteli  diagram $B$, which are indexed by either vertices or edges of $B$. We find relations between these s.f.s.; they lead to a general concept of refinement of a s.f.s. The importance of s.f.s. follows from the result proved in \cite{MaPa11}: any such a system generates a representation of the Cuntz-Krieger algebras on the Hilbert space $L^2(X,\mathcal B, \mu)$.  This idea is elaborated in the present paper  where we study such representations of the Cuntz-Krieger algebras $\mathcal O_{\wt A}$ and $\mathcal O_A$. Here $\wt A$ is the matrix ``dual'' to $A$ whose non-zero entries are defined by linked edges of $B$ when $s(f) = r(e)$ (see the details below). In order to have a s.f.s., a measure on the path space $X_B$ must be defined.

Amongst the variety of possible measures, we work with quasi-stationary Markov measures and shifts on the path space of a diagram. In Section 3, we define the notion of isomorphic s.f.s. and prove that an isomorphism   of two s.f.s. implies the equivalence of the corresponding representations of the Cuntz-Krieger algebra $\mathcal O_A$. In particular, equivalent measures will give equivalent representations of $\mathcal O_A$.
Section 4 is devoted to the study of coupled graphs related to
stationary Bratteli diagrams. Isomorphic coupled graphs produce a map (called an admissible map) that, in its turn, generate an isomorphism of s.f.s. This allows us to find new necessary and sufficient conditions for the equivalence of representations of $\mathcal O_A$. In the last section we define and study a class of representations of $\mathcal O_A$ called {\em monic} representations. They are characterized by the property that the abelian subalgebra $\mathcal D_A$  of $\mathcal O_A$ has a cyclic vector. We prove that such representations can arise only when we take a monic system naturally related to the s.f.s defined on  the topological Markov chain $X_A$ (see Theorem \ref{thm on inherent systems}).

\section{Ingredients of the main results}\label{main ingredients}

In this section, we collect the definitions of the main notions and results that are used in the paper. The most important concepts defined below are (stationary) Bratteli diagrams, coupled directed graphs, and semibranching function systems.

\subsection{Stationary Bratteli diagrams}\label{Bratteli diagrams}

Here we  give the necessary definitions in the context of  general Bratteli diagrams. These definitions are utilized mostly in a particular case of stationary diagrams. We recall that Bratteli diagrams are used in Cantor dynamics to produce convenient models of homeomorphisms of a Cantor set \cite{GPS95}, \cite{HPS92}, \cite{Me06}. For instance, any substitution dynamical systems is represented as a homeomorphism (Vershik map) acting on the path space of a stationary Bratteli diagram \cite{F97}, \cite{DHS99}, \cite{BKM09}, \cite{BKMS10}. Because of the transparent structure of stationary diagrams, many important properties of such dynamical system can be explicitly computed.

\begin{definition}
A \textit{Bratteli diagram} is an infinite graph $B = (V,E)$ such that the vertex set $V = \bigcup_{i\geq 0}V_{i}$ and the edge set $E = \bigcup_{i\geq 1}E_{i}$ are partitioned into disjoint subsets $V_{i}$ and $E_{i}$ such that

(i) $V_{0} = \{v_{0}\}$ is a single point;

(ii) $V_{i}$ and $E_{i}$ are finite sets;

(iii) there exist a range map $r$ and a source map $s$ from $E$ to $V$ such that $r(E_{i}) = V_{i}$, $s(E_{i}) = V_{i-1}$, and $s^{-1}(v)\neq \emptyset$, $r^{-1}(v')\neq \emptyset$ for all $v \in V$ and $v' \in V \setminus V_{0}$.
\end{definition}

The pair $(V_{i}, E_{i})$ or just $V_i$ is called the $i$-th level of the diagram $B$.
A sequence (finite or infinite) of edges $(e_{i} : e_{i} \in E_{i})$ such that $r(e_{i}) = s(e_{i + 1})$ is called a \textit{path}. We denote by $X_{B}$ the set of all infinite paths starting at the vertex $v_{0}$. We suppose that $X_B$ is endowed  with the clopen topology generated by cylinder sets (finite paths, in other words) such that $X_{B}$ turns out a  Cantor set. This can be done for any simple Bratteli diagram and for a wide class of non-simple diagrams that do not have isolated points.

Given a Bratteli diagram $B = (V,E)$, define a sequence of incidence matrices  $F_{n} = (f_{v,w}^{(n)})$ of $B$ where $f_{v,w}^{(n)} = |\{e \in E_{n+1} : r(e) = v, s(e) = w\}|$ and  $v \in V_{n+1}$ and $w \in V_{n}$. Here and thereafter $|V|$ denotes the cardinality of the set $V$. The transpose matrix $F_n^T$ will be denoted by $A_n$.

A Bratteli diagram is called \textit{stationary} if $F_{n} = F_{1}=F$ for every $n \geq 2$. For a stationary diagram $B$ the notation $V$ and $E$ will stand for the sets of vertices of any level and the set of edges between any two consecutive levels below the first one. With some abuse of terminology,  we will also call $A = F^T$ the incidence matrix of the stationary diagram $B$.

A Bratteli diagram $B' = (V',E')$ is called the \textit{telescoping} of a Bratteli diagram $B = (V,E)$ to a sequence $0 = m_0 < m_1 < ...$ if $V_n' = V_{m_n}$ and $E_n'$ is the set of all paths from $V_{m_{n-1}}$ to $V_{m_n}$, i.e. $E_n' = E_{m_{n-1}}\circ...\circ E_{m_n} = \{(e_{m_{n-1}},...,e_{m_n}) : e_i \in E_i, r(e_i) = s(e_{i+1})\}$.

Observe that every vertex $v \in V$ is connected to $v_{0}$ by a finite path, and the set $E(v_0,v)$ of all such paths is finite. A Bratteli diagram is called \textit{simple} if for any $n > 0$ there exists $m > n$ such that any two vertices $v \in V_n$ and $w \in V_m$ are connected by a finite path. Using the telescoping procedure, we can always assume, without loss of generality, that any pair of vertices from two consecutive levels are connected by at least one edge. In case of a stationary Bratteli diagram $B$, the simplicity of $B$ is equivalent to the primitivity of the incidence matrix $F$.

\begin{definition}
Let $B = (V,E)$ be a Bratteli diagram. Two infinite paths $x = (x_{i})$ and $y = (y_{i})$ from $X_{B}$ are called \textit{tail equivalent} if there exists $i_{0}$ such that $x_{i} = y_{i}$ for all $i \geq i_{0}$. Denote by $\mathcal{R}$ the tail equivalence relation on $X_{B}$.
\end{definition}

It can be easily  seen that a Bratteli diagram is simple if and only if the tail equivalence relation $\mathcal R$ is \textit{minimal}; i.e., for arbitrary path $x \in X_B$ the set $\{y \in X_B : y \mbox{ is\ tail\ equivalent\ to\ } x\}$ is dense in $X_B$.

\begin{definition}
A {\em cylinder set} in the path space $X_B$ is the set $ \{x = (x_{i}) \in X_{B} : x_{i} = e_{i}, i = 1,...,n\}  =: X_{w}^{(n)}(\overline{e})$, where  $\overline{e} = (e_{1}, \ldots ,e_{n}) \in E(v_{0}, w)$, $n \geq 1$; we set
$$
X_{w}^{(n)} = \bigcup_{\overline e \in E(v_0, w)} X_{w}^{(n)}(\overline{e}).
$$
The cylinder set $X_{w}^{(n)}(\overline{e})$ will also be denoted as $[\overline e] =
[(e_0, e_1, ... , e_n)]$.
\end{definition}

Clearly, $\xi_n = \{X_{w}^{(n)} : w \in V_n\}$ forms a refining sequence of clopen  partition of $X_B$, $n\in \mathbb N$.

We recall the following facts that are widely used for the study of stationary Bratteli diagrams.

Let $\mathcal{A} = \{a_1, ... ,a_s\}$ be a finite alphabet, $\mathcal{A}^{*}$  the collection of finite non-empty words over $\mathcal{A}$. Denote by $\Omega = \mathcal{A}^\mathbb{Z}$, the set of all two-sided infinite sequences on $\mathcal{A}$. A {\em substitution} $\tau$ is a map $\tau \colon \mathcal{A} \rightarrow \mathcal{A}^{*}$. It extends to maps $\tau \colon \mathcal{A}^{*} \rightarrow \mathcal{A}^{*}$, and $\tau \colon \Omega \rightarrow \Omega$ by concatenation.
Denote by $T$ the shift on $\Omega$, that is $T(... x_{-1}.x_0 x_1 ...) = (... x_{-1}x_0.x_1x_2 ...)$.

For $x \in \Omega$, let $L_n(x)$ be the set of all words of length $n$ occurring in $x$; we set $L(x) = \bigcup_{n \in \mathbb{N}} L_n(x)$. The language of $\tau$ is the set $L_\tau$ of all finite words occurring in $\tau^n(a)$ for some $n \geq 0$ and $a \in \mathcal{A}$. The set $X_\tau ::= \{x \in \Omega : L(x) \subset L_\tau\}$ is $T$-invariant.
The dynamical system $(X_\tau, T_\tau)$, where $T_\tau$ is the restriction of $T$ to the $T$-invariant set $X_\tau$, is called \textit{the substitution dynamical system} associated to $\tau$.

Depending on properties of $\tau$, the system  $(X_\tau, T_\tau)$ can be minimal or, more generally, aperiodic.
As proved in \cite{DHS99} (for a minimal homeomorphism $T_\tau$) and \cite{BKM09} (for aperiodic $T_\tau$), there is a one-to-one correspondence between substitution dynamical systems and  stationary Bratteli diagrams where the dynamics is generated by the so called Vershik map.

The following result simplifies, in general, the study of stationary Bratteli diagrams.

\begin{lem} [\cite{DHS99}, \cite{GPS95}]\label{0-1_matrix}
Given a stationary Bratteli diagram $B$,  there exists a stationary Bratteli diagram $B'$ such that:
\begin{enumerate}
\item $|E(v_0, v)| =1, \forall v \in V$,

\item  the incidence matrix $F'$ is a 0-1 matrix,

\item  $B$ and $B'$ are isomorphic Bratteli diagrams.
\end{enumerate}
\end{lem}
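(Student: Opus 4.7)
The plan is to construct $B'$ from $B$ by the standard ``edge-expansion'' (line-graph) device: each edge of $B$ becomes a vertex of $B'$, and two such vertices are joined by a new edge precisely when the corresponding edges of $B$ are composable (the range of one equals the source of the next). This classical trick converts a non-negative integer incidence matrix into a $0$--$1$ one while leaving the infinite-path space unchanged up to a natural homeomorphism, and it is tailored so that condition (1) holds automatically at the top.

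Concretely, let $E$ denote the common set of edges of $B$ between any two consecutive levels below level~$1$; by stationarity this set and its source/range maps are independent of the level. I would set $V'_0 := \{v'_0\}$ and $V'_n := E$ for all $n \geq 1$; draw a single edge from $v'_0$ to each $e \in V'_1$; and for $n \geq 1$ draw a single edge from $e \in V'_n$ to $f \in V'_{n+1}$ whenever $r(e) = s(f)$ in $B$, and none otherwise. Properties (1) and (2) are immediate: $|E(v'_0, e)| = 1$ for every $e \in V'_1$ by construction, and the resulting incidence matrix $F' = (f'_{f,e})_{e,f \in E}$ with $f'_{f,e} = 1 \Leftrightarrow r(e) = s(f)$ has entries in $\{0,1\}$; since $F'$ is level-independent, $B'$ is stationary.

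For (3), the ``isomorphism'' is most naturally realized at the path-space level. I would define $\phi \colon X_B \to X_{B'}$ by sending $(e_1, e_2, e_3, \ldots) \in X_B$ to $(f_1, f_2, f_3, \ldots) \in X_{B'}$, where $f_1$ is the unique edge of $E'_1$ from $v'_0$ to $e_1 \in V'_1$, and for $n \geq 1$, $f_{n+1}$ is the unique edge of $E'_{n+1}$ joining $e_n$ to $e_{n+1}$ (it exists precisely because $r(e_n) = s(e_{n+1})$ in $B$). A direct check shows that $\phi$ bijects cylinder sets onto cylinder sets, hence is a homeomorphism of Cantor sets, and that it intertwines the tail equivalence relations $\mathcal{R}_B$ and $\mathcal{R}_{B'}$; this is the notion of isomorphism of Bratteli diagrams used in the Cantor-dynamical papers cited.

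The hard part is really interpretative rather than computational. Two Bratteli diagrams with different vertex cardinalities per level (as here, since $|V'_n| = |E|$ typically exceeds $|V|$) cannot be isomorphic as abstract labelled graphs, so (3) must be read as an equivalence in the Bratteli sense, e.g., orbit equivalence of tail relations, equivalently a finite sequence of telescopings and microscopings; the path-space homeomorphism $\phi$ realizes this equivalence. A minor bookkeeping issue is that the edge set $E_1$ from $v_0$ in the original $B$ need not coincide with $E$; this is absorbed either by an initial telescoping of levels $0$ and $1$ of $B$, or, as above, by treating level~$1$ of $B'$ separately so that (1) holds verbatim.
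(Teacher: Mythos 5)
The paper does not actually prove this lemma --- it is quoted from \cite{DHS99} and \cite{GPS95} --- so there is no internal argument to compare against; your edge-expansion (line-graph) construction is exactly the standard proof from those references, and it is correct. Your closing remarks rightly flag the only delicate points: that ``isomorphic'' must be read as Bratteli-diagram equivalence (telescoping/microscoping, realized by a tail-preserving path-space correspondence) rather than labelled-graph isomorphism, and that the edges out of $v_0$ in $B$ must first be normalized, since your map $\phi$ forgets $e_0$ and is injective only once $|E(v_0,v)|=1$ is arranged on the $B$ side as well.
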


A Bratteli diagram $B$ satisfying conditions (1)  and (2) of Lemma \ref{0-1_matrix} will be called (with some abuse of terminology) a \textit{0-1 Bratteli diagram}.

In fact, an obvious modification of Lemma \ref{0-1_matrix} remains true for arbitrary Bratteli diagram.

\begin{remark}
We observe that the path space of a stationary Bratteli diagram can be endowed with a group structure.

Let $F$ be a $d\times d$ matrix over $\mathbb Z$ with transposed $A$; then  $A$ (and $F$) acts on $\mathbb R^d$ by matrix-multiplication: $x \mapsto Ax,\ x \in \mathbb R^d$; and this action passes to the quotient $\mathbb T^d := \mathbb R^d/ \mathbb Z^d$, $x\ (\mathrm{mod}\ \mathbb Z^d) \mapsto Ax \ (\mathrm{mod}\ \mathbb Z^d)$. Setting $[x] := x\ \mod \mathbb Z^d$, we write  $A[x] := [Ax],\ x \in \mathbb T^d$.

The system of mappings
\begin{equation*}
D_F := \mathbb Z^d \stackrel{F}\longrightarrow \mathbb Z^d \stackrel{F}\longrightarrow  \cdots  \stackrel{F}\longrightarrow \mathbb Z^d  \stackrel{F}\longrightarrow \cdots,
\end{equation*}
defines the \textit{inductive limit group}  $D_F$. We recall that the discrete abelian group $D_F$  is formed by equivalence classes of elements $(i, x) \in \mathbb N_0 \times \mathbb Z^d$ with respect to the equivalence relation $((i,x) \sim (j, y) ) \Longleftrightarrow  (\exists n, m \in \mathbb N_0,\ (F)^nx = (F)^m y)$.
If $F$ is invertible, then $(F)^{-i} \mathbb Z^d \hookrightarrow (F)^{-i-1} \mathbb Z^d$. In this case, the equivalence relation $\sim$ is generated by $(i, x) \sim (i+1, Fx)$, so that
$$
D_F = \left(\bigcup_{k\in \mathbb Z}(F)^{-k}\right)\biggl/\mathbb Z^d \ \subset \ \mathbb Q^d.
$$

Using Pontryagin duality for locally compact abelian groups, we get the compact dual group $(D_F)^*$ which is realized as a {\em projective limit group}; it is also called a compact solenoid,
$$
Z_A := (D_F)^* = \{ (x_k) \in \prod_{k\in \mathbb N_0} \mathbb Z_d : A[x_{k+1}] = [x_k],\ \forall k \in \mathbb N_0\},
$$
i.e., $Ax_{k+1}\ (\mathrm{mod}\ \mathbb Z^d)   = x_k\  (\mathrm{mod}\ \mathbb Z^d)$. Here $\mathbb Z_d$ denotes the cyclic group of order $d$, and $\prod \mathbb Z_d$ is embedded into  $\prod \mathbb T^d$ via $A$.

\begin{lem}
As  a compact Cantor space, the path space $X_B$ of a stationary Bratteli diagram $B$ with incidence matrix $F$ is homeomorphic to the compact abelian group $Z_A = (D_F)^* $.
\end{lem}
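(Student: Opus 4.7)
The plan is to exhibit an explicit homeomorphism $\phi: X_B \to Z_A$ by sending each path to the sequence of vertices it visits.

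First, by Lemma \ref{0-1_matrix} we may replace $B$ by an isomorphic 0-1 Bratteli diagram without changing the path space up to homeomorphism. In this normalization, each infinite path $x=(x_k)_{k\geq 1}\in X_B$ is uniquely determined by its sequence of ranges $v_k(x):=r(x_k)\in V$, and the collection of such sequences coincides with $\{(v_k)\in V^{\mathbb N}: A_{v_{k+1},v_k}=1 \text{ for all } k\geq 1\}$, where the (unique) edge from $v_0$ is absorbed into the choice of $v_1$.

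Next, after fixing a bijection $\iota: V\to \mathbb Z_d$ with the cyclic group of order $d=|V|$, define $\phi: X_B\to \prod_{k\in\mathbb N_0}\mathbb Z_d$ by $\phi(x)=(\iota(v_k(x)))_{k\geq 1}$. The crux of the argument is to check that the image of $\phi$ is exactly $Z_A$: one verifies that the compatibility relation $A[x_{k+1}]=[x_k]$ appearing in the projective-limit description of $Z_A$ translates precisely into the adjacency condition $A_{v_{k+1},v_k}=1$ inherited from $B$. This amounts to unwinding the Pontryagin-dual realization of $Z_A=\widehat{D_F}$ as a projective limit of finite approximations and matching its $n$-th level with the set of admissible vertex prefixes of length $n$.

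Continuity of $\phi$ is clear since the preimages of cylinders in $\prod \mathbb Z_d$ (which form a basis for the product/projective-limit topology of $Z_A$) are precisely the cylinder sets $X_w^{(n)}(\bar e)$ that generate the Cantor topology on $X_B$. Injectivity is immediate from the uniqueness of path encodings in the 0-1 case, and surjectivity onto $Z_A$ is the image identification just discussed. Since $X_B$ is compact and $Z_A$ is Hausdorff, $\phi$ is automatically a homeomorphism. The main obstacle is the middle step: matching the compatibility condition defining the solenoid $Z_A$ with the edge-adjacency condition of $B$; once the 0-1 reduction is in place this becomes a careful bookkeeping argument, but it requires tracking the interplay between $F$, its transpose $A$, and their induced maps on the discrete inductive system versus its compact dual approximations.
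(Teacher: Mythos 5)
Your overall route is the same as the paper's own (sketched) proof: reduce to a $0$-$1$ diagram via Lemma \ref{0-1_matrix}, encode a path by its vertex sequence so that $X_B$ becomes $\{(s_k): f_{s_k,s_{k+1}}=1\}$, label the vertices by elements of a finite group, and finish with the standard ``continuous bijection from a compact space to a Hausdorff space'' argument. The topological bookkeeping at the end is fine. The problem is that the one step carrying all of the mathematical content --- which you yourself single out as ``the crux'' and ``the main obstacle'' --- is asserted rather than proved, and the way you have set it up it would not go through. You fix an \emph{arbitrary} bijection $\iota\colon V\to\mathbb Z_d$ with the cyclic group of order $d=|V|$ and then claim that the solenoid compatibility relation $A[x_{k+1}]=[x_k]$ ``translates precisely'' into the adjacency condition of $B$. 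For a generic bijection this is false: the set of pairs $(y,z)\in\mathbb Z_d\times\mathbb Z_d$ satisfying $A[z]=[y]$ is dictated by the arithmetic of $A$ acting on the quotient tower, and there is no reason it should coincide with $\{(\iota(v),\iota(w)) : a_{w,v}=1\}$ for a labeling chosen without reference to that arithmetic.

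What the paper's proof actually does at this point is to choose the vertex labels as a set of representatives of the finite quotient $\mathbb Z^d/A\mathbb Z^d$, so that the relation $A[x^{(k+1)}_{s_{k+1}}]=x^{(k)}_{s_k}$ coming from Pontryagin duality for $(D_F)^*$ is, by construction, the incidence relation of the $0$-$1$ matrix. That choice (and the verification that with this choice the two relations agree level by level) is precisely the content you have deferred to ``careful bookkeeping''; note also that $|\mathbb Z^d/A\mathbb Z^d|=|\det A|$ need not equal $|V|$, so even the cardinality bookkeeping in your identification with the cyclic group $\mathbb Z_d$ does not line up with the object the duality produces. To repair the argument you must replace the arbitrary bijection by the duality-compatible identification and actually verify the equivalence $f_{s_k,s_{k+1}}=1 \Longleftrightarrow A[x_{k+1}]=[x_k]$; everything else in your write-up is routine once that is in place.
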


\begin{proof} (Sketch)  As mentioned above, we can assume that  $B$ is a 0-1 Bratteli diagram. For the  0-1 incidence matrix $F$, there is an alphabet $\Sigma$ (see \cite{CuKr80}) such that
$$
X_B = \{(s_k) \in \prod_{k\in \mathbb N_0} V : f_{s_k, s_{k+1}} = 1\}.
$$
Picking a set of elements $V$ in the finite quotient $\mathbb Z^d/A\mathbb Z^d$ and using the Pontryagin duality $Z_A = (D_F)^* $, we see that there is a homeomorphism $X_B \longleftrightarrow Z_A$ via $A[x^{(k+1)}_{s_{k+1}}] = x_{s_k}^{(k)}$.
\end{proof}

It follows from the proved result that the path space $X_B$ has the structure of a compact abelian group $Z_A$ with the probability Haar measure.
\end{remark}

\subsection{Measures on Bratteli diagrams}\label{measures on BD}
Here we give a few definitions and facts related to a class of probability Borel measures on the path space $X_B$ of a Bratteli diagram $B$, stationary and non-stationary ones. More details can be found in \cite{BKMS10} and \cite{BJ14}. In order to avoid some unnecessary complications,  we make the following assumption: \textit{all measures considered in this paper are assumed to be non-atomic and Borel}.

We will now describe a procedure that would allow us to extend a measures $m$, which is initially defined on cylinder sets of $X_B$, to the sigma-algebra $\mathcal B(X_B)$ of all Borel sets.

Let $X$ be a compact metric space and let  $\mathcal B = \mathcal B(X)$ be the sigma-algebra of all Borel sets. Suppose that  $\mathcal F$ and $\mathcal G$ are  two finitely generated  sigma-subalgebras such that $\mathcal F \subset \mathcal G$. Denote by $\mathcal{M(F)}$ and $\mathcal{M(G)}$ the corresponding algebras of $\mathcal F$-measurable and $\mathcal G$-measurable functions on $X$; let $\mathbb I$ denote the constant function ``one'' on $X$.

A positive operator $\mathcal E = \mathcal E_{\mathcal{FG}} : \mathcal {M(G)} \to \mathcal {M(F)}$ is said to be a \textit{conditional expectation} if

(i) $\mathcal E(\mathbb I) = \mathbb I$;

(ii) $\mathcal E$ is positive, that is $\mathcal E$ maps positive functions in $\mathcal{M(G)}$ onto positive functions in $\mathcal{M(F)}$;

(iii) $\mathcal E(fg) = f \mathcal E(g)$ hold for all $g \in \mathcal{M(G)}$ and $f \in \mathcal{M(F)}$.

\begin{lem}\label{measure extension} Let  $(X, \mathcal B)$ be as above, and  let $(\mathcal F_n)_{n\in \mathbb N}$ be a sequence of finitely generated  sigma-subalgebras such that $\mathcal F_n \subset \mathcal F_{n+1}$, and let $\mathcal E_{n} : \mathcal {M}(\mathcal F_{n+1}) \to \mathcal {M}(\mathcal F_n)$ be an associated sequence of conditional expectations such that the following property holds for all $k < l < n$: if $\mathcal E_{j,i} =\mathcal E_i \circ \cdots \circ \mathcal E_{j-1}$, then
$$
\mathcal E_{n,l}\mathcal E_{l,k} = \mathcal E_{n,k}
$$
(in the above notation, $\mathcal E_n = \mathcal E_{n+1, n}$), or, in other words, the following diagram is commutative
$$
\begin{array}[c]{ccc}
 \mathcal {M}(\mathcal F_n)  & \stackrel{\mathcal E_{n,k}}{\longrightarrow} &  \mathcal {M}(\mathcal F_k)\\
\searrow\scriptstyle{\mathcal E_{n,l}}
&&\nearrow\scriptstyle{\mathcal E_{l,k}}\\
&\mathcal {M}(\mathcal F_l)&
\end{array}
$$
Assume further that
 $$
\bigcup_n \mathcal F_n = \mathcal B.
$$
Let $(\mu_n)$ be a sequence of measures, $\mu_n$ defined on $\mathcal F_n$ for all $n \in \mathbb N$, and assume that the conditional expectations satisfy
\begin{equation}\label{expectations1}
     \mathcal E_n(\mu_{n+1}) = \mu_{n}, \ \ \ n \in \mathbb N.
\end{equation}
Then there is a unique measure $\widetilde \mu$ on $\mathcal B$ such that
\begin{equation}\label{expectations2}
      \mathcal E_{\mathcal F_n, \mathcal B}(\widetilde \mu) = \mu_n, \ \ \ n \in \mathbb N.
\end{equation}
\end{lem}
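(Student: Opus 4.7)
The plan is to apply Carath\'eodory's extension theorem to the finitely additive set function obtained by patching the $\mu_n$ on the algebra $\mathcal{A} := \bigcup_n \mathcal{F}_n$. Since each $\mathcal{F}_n$ is finitely generated, it is a finite $\sigma$-algebra determined by a finite partition $\xi_n$ of $X$, and the inclusion $\mathcal{F}_n \subset \mathcal{F}_{n+1}$ refines $\xi_n$ to $\xi_{n+1}$. The compatibility $\mathcal{E}_n(\mu_{n+1}) = \mu_n$, read at the level of measures as $\mu_{n+1}|_{\mathcal{F}_n} = \mu_n$, then becomes the familiar projective consistency
\[
\mu_n(A) = \sum_{B \in \xi_{n+1},\, B \subset A} \mu_{n+1}(B) \qquad (A \in \xi_n).
\]

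First I would define $\nu(E) := \mu_n(E)$ whenever $E \in \mathcal{F}_n$ and check that this is well-defined on $\mathcal{A}$ by iterating (\ref{expectations1}) along the tower relation $\mathcal{E}_{n,l}\mathcal{E}_{l,k} = \mathcal{E}_{n,k}$ to deduce $\mu_m|_{\mathcal{F}_n} = \mu_n$ for every $m \geq n$. Finite additivity of $\nu$ on $\mathcal{A}$ is inherited directly from the finite additivity of each $\mu_n$, so what remains is to verify countable additivity on $\mathcal{A}$, invoke Carath\'eodory, and then read off the identification $\mathcal{E}_{\mathcal{F}_n, \mathcal{B}}(\widetilde{\mu}) = \mu_n$ as the tautology that $\widetilde{\mu}$ restricts to $\mu_n$ on $\mathcal{F}_n$.

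The main obstacle is precisely countable additivity on the algebra, equivalently that $\nu(E_k) \to 0$ whenever $E_k \downarrow \emptyset$ in $\mathcal{A}$. Here I would exploit the topological structure of $X$: in the intended application the atoms of each $\mathcal{F}_n$ are cylinder sets $X_w^{(n)}$, which are clopen and hence compact, so any decreasing sequence in $\mathcal{A}$ is a decreasing sequence of finite unions of compact sets; if the total intersection is empty, the finite intersection property forces $E_k = \emptyset$ for large $k$, whence $\nu(E_k) = 0$ trivially. In the fully general compact metric setting one would instead use inner regularity of finite Borel measures, approximate each $E_k$ from within by a compact $K_k \subset E_k$ with $\mu_{n(k)}(E_k \setminus K_k) < \varepsilon\, 2^{-k}$, and apply the finite intersection property to $\bigcap_k K_k = \emptyset$.

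Finally, uniqueness of $\widetilde{\mu}$ is immediate from the Dynkin $\pi$--$\lambda$ theorem: $\mathcal{A}$ is a $\pi$-system (indeed an algebra) that generates $\mathcal{B}$, so any two Borel probability measures agreeing on $\mathcal{A}$ must agree on $\mathcal{B}$.
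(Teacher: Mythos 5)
Your proposal is correct in the setting the lemma is actually used in, but it takes a genuinely different route from the paper. The paper argues on the function side: it forms $\mathcal A = \bigcup_n C(X)\cap\mathcal M(\mathcal F_n)$, shows $L(f)=\int f\,d\mu_n$ is well defined on $\mathcal A$ by the compatibility $\mathcal E_n(\mu_{n+1})=\mu_n$, invokes Stone--Weierstrass to see $\mathcal A$ is dense in $C(X)$, extends $L$ by continuity, and then applies the Riesz representation theorem to produce $\widetilde\mu$. You instead work on the set side: patch the $\mu_n$ into a finitely additive set function on the algebra $\bigcup_n\mathcal F_n$, prove countable additivity via a compactness argument, and apply Carath\'eodory, with uniqueness from the $\pi$--$\lambda$ theorem. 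Both hinge on the same translation of $\mathcal E_n(\mu_{n+1})=\mu_n$ into $\mu_{n+1}|_{\mathcal F_n}=\mu_n$ (which you should justify by noting $\mathcal E_n(f)=f\,\mathcal E_n(\mathbb I)=f$ for $f\in\mathcal M(\mathcal F_n)$). Your route is more elementary and makes explicit where the real work is (countable additivity on the algebra), whereas the paper's route gets uniqueness and the identity $\mathcal E_{\mathcal F_n,\mathcal B}(\widetilde\mu)=\mu_n$ almost for free from Riesz duality.

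One caveat. Your primary argument for countable additivity --- clopen atoms, hence compact, hence the finite intersection property forces a decreasing sequence with empty intersection to be eventually empty --- is exactly right for the intended application (cylinder sets in $X_B$). But your ``fully general compact metric'' fallback does not work as written: $\mu_{n(k)}$ is defined only on the finite $\sigma$-algebra $\mathcal F_{n(k)}$, so the quantity $\mu_{n(k)}(E_k\setminus K_k)$ is undefined for an arbitrary compact $K_k\subset E_k$; inner regularity is a property of the Borel measure you are trying to construct, not something you can assume of the $\mu_n$. Indeed, with arbitrary Borel atoms the lemma as literally stated can fail (take atoms built from a decreasing sequence of open sets with empty intersection, each carrying mass $1$), so some form of the clopen/compact-class hypothesis is genuinely needed. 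You are in good company: the paper's own proof has the same hidden dependence, since $\mathcal A$ separates points only when the atoms of the $\mathcal F_n$ are clopen. I would simply drop the general fallback and state the clopen hypothesis explicitly.
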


\begin{proof} We give a sketch of the proof. Let $C(X)$ denote the space of continuous functions. Define $\mathcal A = \bigcup_n C(X) \cap\mathcal M(\mathcal F_n)$. Then $\mathcal A$ is closed under the complex conjugacy and separates points in $X$. Hence, it is uniformly dense in $C(X)$ by the Stone-Weierstrass theorem.

For $f \in \mathcal A$, pick $n \in \mathbb N$ such that $f \in \mathcal M(\mathcal F_n)$, and set
\begin{equation}\label{functional L}
     L(f) =\int_X f d\mu_n
\end{equation}
Using (iii) of the definition of the conditional expectation and relation (\ref{expectations1}), we note that $L$ is a well defined linear functional on $\mathcal A$. Indeed, for any $f \in \mathcal M(\mathcal F_n)$,
$$
\int_X f d\mu_{n+1} = \int_X \mathcal E_n(f) d\mu_{n+1}  = \int_X f d(\mathcal E_n(\mu_{n+1})) = \int_X f d\mu_{n}.
$$
Now using the Stone-Weierstrass theorem on $\mathcal A$, we note that $L$ in (\ref{functional L}) extends by closure to $C(X)$. Denote this uniquely defined extension by $\widetilde L$. The Riesz' theorem applied to $\widetilde L$ yields a unique  probability measure $\widetilde \mu$, defined on $\mathcal B$, such that
$$
 \widetilde  L(f) =\int_X f d\widetilde \mu
$$
holds for all $f \in C(X)$.
By standard arguments, we can show that $\mathcal E(\widetilde \mu) = \mu_n,\ \forall n\in \mathbb N$. The result then follows.
\end{proof}

\begin{remark} In order to illustrate the described above method, one can  consider, for instance, the case of the infinite Cartesian product $(X, \nu) = (\prod_i X_i, \prod_i\nu_i)$ of compact measure spaces with $\mathcal F_n = \mathcal B(X_1) \times \cdots \times \mathcal B(X_n)$. Then $\mathcal M(\mathcal F_n)$ is formed by functions $f(x)$ depending on the first $n$ coordinates, i.e. $f(x) = f_n(x_1, ... , x_n)$. The conditional expectation $\mathcal E_n : \mathcal M(\mathcal F_{n+1}) \to \mathcal M(\mathcal F_n)$ is defined by
$$
(\mathcal E_n (f_{n+1}))(x_1, ... , x_{n}) = \int_{X_{n+1}} f_{n+1}(x_1, ... , x_{n+1})d\nu_{n+1}(x_{n+1}).
$$
A direct computation shows that for the measure $\mu_n = \nu_1 \times \cdots \times \nu_n$ on $\mathcal F_n$, we have $\mathcal E_n (\mu_{n+1}) = \mu_n$
and Lemma \ref{measure extension} is applicable.

Another sort of examples that explains the result of the lemma is  based on Bratteli diagrams. Given a Bratteli diagram $B = (V,E)$, define $\mathcal F_n$ as the sigma-subalgebra generated by cylinder sets $[\overline e] = [(e_0, ... , e_n)]$ of length $n$. Then the measure $\mu_n$ on $\mathcal F_n$ can be computed by formulas given, for example,  in (\ref{inv_mesaure}) or (\ref{m([e])}). It is not hard to verify that again one has that the relation  $\mathcal E_n (\mu_{n+1}) = \mu_n$ holds for Markov measures (they are defined below).
\end{remark}

In what follows, we will consider some specific classes of measures on Bratteli diagrams.

\textbf{$\mathcal R$-invariant measures}. Let $B$ be a Bratteli diagram with sequence of incidence matrices $(F_n)$.
It is said that a Borel measure $\mu$ on $X_B$ is $\mathcal{R}$-\textit{invariant} if for any $n\in \mathbb N$, any vertex $w\in V_n$, and any paths $\overline{e}$ and $\overline{e}'$ from $E(v_{0}, w)$  one has $\mu(X_{w}^{(n)}(\overline{e})) = \mu(X_{w}^{(n)}(\overline{e}'))$. Given an invariant measure $\mu$, we set $\mu^{(n)} = (\mu^{(n)}_v : v \in V_n)$ where $\mu^{(n)}_v =\mu(X_{v}^{(n)})$. Then $\mu$ is completely determined by a sequence of positive probability vectors $(\mu^{(n)})$ satisfying the property
\begin{equation}\label{mu^n vectors}
A_n \mu^{(n+1)} = \mu^{(n)},  \ \ \  n\in \mathbb N.
\end{equation}
Denote by $\mathcal M(\mathcal R)$ and $\mathcal M_1(\mathcal R)$ the sets
of  $\mathcal R$-invariant measures and of probability  $\mathcal R$-invariant measures, respectively. If $B$ is a simple Bratteli diagram, then these sets coincide.

In this paper we mostly deal with simple stationary Bratteli diagrams. Let $B$  be a stationary Bratteli diagram  defined by a primitive matrix $A = F^T$. Suppose that $\lambda$ is the Perron-Frobenius eigenvalue of $A$, and  $\mathbf x =  (\mathbf x_1, ... , \mathbf x_K)^T$ is the corresponding strictly positive eigenvector normalized by the condition $\sum_{i=1}^K \mathbf x_i = 1$.
It is well known that, for a simple stationary Bratteli diagram, there exists a unique ergodic $\mathcal R$-invariant measure $\mu$ on $X_B$, that is $\mathcal M_1(\mathcal R) =\{\mu\}$. This measure $\mu$ is completely determined by its values on cylinder sets
\begin{equation}\label{inv_mesaure}
\mu(X_{i}^{(n)}(\overline{e})) = \frac{\mathbf x_i}{\lambda^{n - 1}},
\end{equation}
where $i\in V_n$, and $\overline e$ is a finite path with $r(\overline e) = i$.
\medskip

\textbf{Markov measures}. More generally, we can consider a class of Borel probability measures on the path space $X_B$ called \textit{Markov measures} because of a clear analogue with the case of Markov chains. In particular, this class contains all $\mathcal R$-invariant probability measures  \cite{BJ14}.

Let $(P_n)$ be a sequence of non-negative matrices with entries $(p^{(n)}_{v,e})$ where $v \in V_n, e \in E_{n+1}, n=0, 1, ... $ Thus, the size of $P_n$ is   $|V_n| \times |E_{n+1}|$. In particular, $P_0$ is a row vector. To define a Markov measure $m$, we require that the sequence $(P_n)$ satisfies the following properties:
\begin{equation}\label{defn of P_n}
(a)\ \ p^{(n)}_{v,e} > 0 \ \Longleftrightarrow \ s(e) = v; \ \ \ \ (b)\ \  \sum_{e : s(e) = v} p^{(n)}_{v,e} =1.
\end{equation}
 Then we set for any cylinder set $[\overline e] = [(e_0, e_1, ... , e_n)]$
\begin{equation}\label{m([e])}
m([\overline e]) = p^{(0)}_{v_0, e_0}p^{(1)}_{s(e_1), e_1} \cdots p^{(n)}_{s(e_n), e_n}.
\end{equation}
To emphasize that $m$ is generated by a sequence of stochastic matrices, we will also write down $m = m(P_n)$.

Now we return to the definition of a Markov measure and show that, for any such a measure  $m(P_n)$ on a Bratteli diagram $B = (V,E)$,  we can inductively define a sequence of probability vectors $q^{(k)} = (q^{(k)}(v) : v \in V_k),\ k \geq 1$, by the following formula
$$
q^{(k)}(v)  = \sum_{e \in E_k : r(e) = v} q^{(k-1)}(s(e)) p^{(k)}_{s(e), e}
$$
where $P_k = (p^{(k)}_{s(e),e})$ and $q^{(0)} = 1$.

Let $\nu$ be a probability $\mathcal R$-invariant measure. Then one can show that in this case the vectors $q^{(k)}$ coincide with $\nu(X_v^{(k)})$ defined in (\ref{mu^n vectors}).

The following result makes a link between the two  classes of measures.

\begin{lem} [\cite{BJ14}]\label{inv meas is Markov}
Let $\nu \in \mathcal M_1(\mathcal R)$. Then there exists a sequence of stochastic matrices $(P_n)$ such that $\nu =  m(P_n)$. In other words, every probability Borel $\mathcal R$-invariant measure is a Markov measure.
\end{lem}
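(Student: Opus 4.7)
The plan is to build the stochastic matrices $P_n = (p^{(n)}_{v,e})$ from $\nu$ by conditional probabilities. For $v \in V_n$ with $\nu(X_v^{(n)}) > 0$ and $e \in E_{n+1}$, define
$$p^{(n)}_{v,e} := \begin{cases} \dfrac{\nu(X_{r(e)}^{(n+1)}(\overline e \cdot e))}{\nu(X_v^{(n)}(\overline e))} & \text{if } s(e)=v,\\[4pt] 0 & \text{otherwise,}\end{cases}$$
where $\overline e \in E(v_0,v)$ is any finite path to $v$. The key point enabling this definition is that $\mathcal R$-invariance makes both numerator and denominator depend only on the endpoint and level, not on the particular path chosen, so $p^{(n)}_{v,e}$ is well-defined.

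Next I would verify the two requirements in (\ref{defn of P_n}). Condition (a) holds by construction on the ``good'' set $\{v \in V_n : \nu(X_v^{(n)}) > 0\}$; for $v$ with $\nu(X_v^{(n)})=0$, I would assign arbitrary positive weights on $\{e : s(e)=v\}$ summing to $1$, a choice that will not disturb anything because every cylinder starting with a subpath ending at such a $v$ already has $\nu$-measure zero. Condition (b) follows from the disjoint decomposition $X_v^{(n)}(\overline e)=\bigsqcup_{s(e)=v} X_{r(e)}^{(n+1)}(\overline e\cdot e)$ and additivity of $\nu$:
$$\sum_{e:\, s(e)=v} p^{(n)}_{v,e} = \frac{1}{\nu(X_v^{(n)}(\overline e))}\sum_{e:\, s(e)=v}\nu(X_{r(e)}^{(n+1)}(\overline e \cdot e)) = 1.$$

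The third step is a telescoping computation on cylinders. For $[\overline e]=[(e_0,e_1,\dots,e_n)]$, setting $\nu(X_{v_0}^{(0)}) = 1$ by convention, one gets
$$\prod_{i=0}^{n} p^{(i)}_{s(e_i),e_i} \;=\; \prod_{i=0}^{n}\frac{\nu([e_0,\dots,e_i])}{\nu([e_0,\dots,e_{i-1}])} \;=\; \nu([\overline e]).$$
Thus $m(P_n)$ agrees with $\nu$ on the algebra of cylinder sets; since this algebra generates $\mathcal B(X_B)$ and both measures are probability measures, the standard uniqueness argument (or Lemma \ref{measure extension} applied to $\mathcal F_n$ = sigma-algebra of level-$n$ cylinders) forces $\nu = m(P_n)$.

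I expect the only real obstacle to be the bookkeeping around vertices $v$ for which $\nu(X_v^{(n)})=0$, where the conditional-probability ratio is $0/0$. The resolution is the arbitrary-positive-completion described above, justified by observing that any cylinder $[\overline e]$ passing through such a $v$ satisfies $\nu([\overline e])=0$, and the corresponding product $\prod p^{(i)}_{s(e_i),e_i}$ inherits a factor coming from the transition out of the last positive-measure vertex along $\overline e$, which is itself zero by the natural formula; hence both sides vanish regardless of how we complete the stochastic matrix at null vertices. Everything else is purely mechanical.
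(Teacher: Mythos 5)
The paper gives no proof of this lemma at all --- it is quoted from \cite{BJ14}, listed as ``in preparation'' --- so there is nothing to compare against line by line. Your argument is the natural one and is essentially correct: $\mathcal R$-invariance makes the conditional probabilities $p^{(n)}_{v,e}$ depend only on $(n,v,e)$ and not on the chosen path $\overline e$, the disjoint decomposition $X_v^{(n)}(\overline e)=\bigsqcup_{e:\,s(e)=v}X_{r(e)}^{(n+1)}(\overline e\cdot e)$ gives stochasticity, the product telescopes to $\nu([\overline e])$, and agreement on the cylinder algebra (a $\pi$-system generating $\mathcal B(X_B)$) forces $\nu=m(P_n)$. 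The one point you do not fully close is condition (a) of (\ref{defn of P_n}): you treat the $0/0$ problem at vertices with $\nu(X_v^{(n)})=0$, but not the case $\nu(X_v^{(n)})>0$ while $\nu(X_{r(e)}^{(n+1)}(\overline e\cdot e))=0$ for some $e$ with $s(e)=v$. There your $p^{(n)}_{v,e}$ is forced to be $0$ (it is a conditional probability, so no other choice reproduces $\nu$), which violates the strict biconditional $p^{(n)}_{v,e}>0\Leftrightarrow s(e)=v$. For fully supported measures --- in particular the unique invariant measure on a simple diagram, which is the case actually used in the paper --- this cannot occur; in general one must either weaken (a) to the implication $p^{(n)}_{v,e}>0\Rightarrow s(e)=v$ or record full support as a hypothesis. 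This is as much a defect of the definition as of your proof, but it deserves a sentence.
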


For a stationary Bratteli diagram $B$, it is natural to distinguish and study a special subset of Markov measures $\nu = \nu(P)$, the so called \textit{stationary Markov measures}. They are obtained when all matrices $P_n, n\in \mathbb N,$ are the same and equal to a fixed matrix  $P$. Formula (\ref{m([e])}) is transformed then as follows:
\begin{equation}\label{nu(P)}
\nu([\overline e]) = p^{(0)}_{v_0, e_0}p_{s(e_1), e_1} \cdots p_{s(e_n), e_n}.
\end{equation}

%%%%%%%%%%%%%%%
%%%%%%%%%%%%%%%
\subsection{Graphs coupled with Bratteli diagrams} In this subsection we show how can one associate  a directed graph $G = (T,P)$ to a stationary Bratteli diagram. It will be clear that the suggested construction can be used in more general settings but we are focused here on the case of stationary diagrams only. Moreover, without loss of generality, we assume that a given stationary Bratteli diagram $B$ is a 0-1 simple diagram; that is it satisfies conditions (1) and (2) of Lemma \ref{0-1_matrix}.

Let $E$ be the edge set  between the first and second levels of $B$ and let $A$ be the transpose of the incidence matrix. By the made assumption, the diagram has only single edges between the vertices of consecutive levels.

\begin{remark}\label{correspondence E=A} We note that there is a one-to-one correspondence between non-zero entries of $A$ and edges of $E$:
$$
a_{i,j}\  \longleftrightarrow \ e \ \ \mbox{iff} \ \ s(e) = i, r(e) = j,\  i,j \in V.
$$
This simple observation will be regularly exploited below.
\end{remark}

\begin{definition}\label{linked edges} (1) We say that a \textit{pair of edges $(e, f) \in E\times E$ is linked if $r(e) = s(f)$}. Denote by $\mathcal L(E)$ the set of linked pairs.

(ii) Let $e \longleftrightarrow a_{i,j}$ and $f \longleftrightarrow a_{k,l}$ be the correspondences defined  in Remark \ref{correspondence E=A}. Then $(e,f) \in \mathcal L(E)$ if and only if $k = j$. In this case, we say that \textit{$a_{j,l}$ follows $a_{i,j}$.}
\end{definition}

Next, we want to associate a directed graph to a stationary Bratteli diagram with  0-1 matrix $A$. This graph, $G$, will be uniquely defined by the matrix $A$ so that we can write down $G= G(A)$.

\begin{definition}\label{graph G(A)} Let $A$ be a 0-1 matrix. Then the set of vertices, $T$, of  the directed graph $G = G(A)$ is formed non-zero entries $a_{i,j}$ of $A$. To define the set of directed edges, $P$, of $G$, we say that there is an arrow (directed edge) from $a_{i,j}$ to $a_{k,l}$ if and only if  $a_{k,l}$ follows $a_{i,j}$ (i.e. $j=k$). By definition,  $G =(T,P)$ is called a {\em coupled graph}.
 \end{definition}

It follows from this definition that for a fixed non-zero entry $a_{i,j} $ of $A$ (or a vertex $t \in T$) the number of incoming edges for $t$ equals the number of non-zero entries of $A$ in the $i$-th column, and the number of outgoing edges for $t$ equals the number of non-zero entries of $A$ in the $j$-th row.

Any graph which is isomorphic to $G(A)$ can be treated as
a  graph coupled to $A$.

\begin{example}\label{example 3x3} Let
$$
A =\left(
     \begin{array}{ccc}
       1 & 1 & 0 \\
       0 & 1 & 1\\
       1 & 0 & 1 \\
     \end{array}
   \right).
$$
Then $G(A)$ can be represented as follows.
$$
\unitlength=1cm
\begin{graph}(8,6.5)
 \graphnodesize{0.2}
 %first  row
 \roundnode{V11}(1,5)
  \roundnode{V12}(4,5)
% \roundnode{V13}(5,5)
 % The second row
 %\roundnode{V21}(1,3)
 \roundnode{V22}(4,3)
 \roundnode{V23}(7,3)
 % The third row
 \roundnode{V31}(1,1)
 % \roundnode{V32}(3,1)
 \roundnode{V33}(7,1)
 %
 %
 % EDGES
 %\graphlinewidth{0.025}
 % First level
   \diredge{V11}{V12}
%\dirloopedge{V11}(-0.5, 0.5)(0.5,0.5)
\dirloopedge{V11}{90}(-0.5,0.5)
\diredge{V12}{V22}
 \diredge{V12}{V23}
%second level
\dirloopedge{V22}{-90}(-0.5,-0.5)
 \diredge{V22}{V23}
 \diredge{V23}{V31}
 \diredge{V23}{V33}
%third level
 \diredge{V33}{V31}
\dirloopedge{V33}{90}(0.5,-0.5)
 \diredge{V31}{V12}
  \diredge{V31}{V11}
 \end{graph}
$$
\hskip 3cm \mbox{Fig. 1: Graph defined by the matrix $A$}
\end{example}
 Obviously, such a representation of $G(A)$ is  unique up to an isomorphism.

We recall that  a graph $G$ is called strongly connected if for any two vertices $t_1$ and $t_2$ of $G$ there exist a path from $t_1$ to $t_2$ and a path from $t_2$ to $t_1$.

\begin{prop}\label{graph properties} Suppose that $B$ is  a stationary 0-1 Bratteli diagram. Then

(1) the diagram is simple if and only if the coupled graph $G$ is strongly connected;

(2) there is a one-to-one correspondence between the path space $X_B$ of $B$ and the set of infinite paths $X_G$ of the coupled graph $G$.
\end{prop}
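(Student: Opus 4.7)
The plan is to exploit the one-to-one correspondence of Remark \ref{correspondence E=A} between non-zero entries $a_{i,j}$ of $A$ and edges $e \in E$ (with $s(e)=i$, $r(e)=j$). Under this correspondence, the condition that consecutive edges $e_m, e_{m+1}$ in $B$ are linked, $r(e_m) = s(e_{m+1})$, is exactly the condition for the vertices $a_{s(e_m),r(e_m)}$ and $a_{s(e_{m+1}),r(e_{m+1})}$ of $G$ to be joined by a directed arrow, by Definition \ref{graph G(A)}. Thus finite (resp. infinite) paths in $B$ correspond term-by-term to finite (resp. infinite) directed walks in $G$, and both parts of the proposition reduce to applications of this dictionary.

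\textbf{Part (1).} For the forward direction, simplicity of $B$ amounts to primitivity of $A$: $A^N > 0$ entrywise for some $N$. Given two vertices $a_{i,j}, a_{k,l}$ of $G$, the positivity $(A^N)_{j,k} > 0$ yields a $B$-path $j = v_0, v_1, \ldots, v_N = k$ of non-zero entries $a_{v_m,v_{m+1}}$, which, flanked by $a_{i,j}$ and $a_{k,l}$, becomes a directed $G$-path from $a_{i,j}$ to $a_{k,l}$. For the converse, given any $i, l \in V$, the Bratteli-diagram axioms ensure non-zero entries $a_{i,j}$ and $a_{k,l}$ exist; a directed $G$-path from $a_{i,j}$ to $a_{k,l}$ translates back into a finite $B$-path from $i$ to $l$, so any two vertices of $V$ are joined by a $B$-path.

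\textbf{Part (2).} I would define $\Phi \colon X_B \to X_G$ by $\Phi(x_1,x_2,\ldots) = \bigl(a_{s(x_n),\,r(x_n)}\bigr)_{n \geq 2}$. The linkage condition $r(x_n) = s(x_{n+1})$ in $X_B$ gives a directed edge in $G$ at each step, so $\Phi(x) \in X_G$. For the inverse, a $G$-path $(t_n)_{n \geq 1}$ with $t_n = a_{i_n, j_n}$ and $j_n = i_{n+1}$ yields, for each $n \geq 1$, a unique edge $x_{n+1} \in E$ with $s(x_{n+1}) = i_n$ and $r(x_{n+1}) = j_n$ (uniqueness follows from the 0-1 assumption), and the initial edge $x_1 \in E_1$ is the unique one from $v_0$ to $i_1$ provided by Lemma \ref{0-1_matrix}(1). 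Compatibility across consecutive indices is immediate from $j_n = i_{n+1}$.

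The subtle part will be the reverse implication in (1). The dictionary immediately gives strong connectedness of $G$ $\Rightarrow$ irreducibility of $A$, but simplicity of $B$ in the sense of the paper is equivalent to the strictly stronger primitivity of $A$. To close this gap one would appeal to additional structure visible in $G$ — e.g., a loop at some vertex, or cycles of coprime lengths through a common vertex — that forces $A$ to be aperiodic, upgrading irreducibility to primitivity. This is where I expect the argument to need the most care.
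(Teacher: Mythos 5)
Your edge-to-entry dictionary is exactly the mechanism of the paper's proof, and your arguments for the forward direction of (1) and for part (2) match the paper's: the paper obtains (2) by remarking that the same correspondence applies to infinite paths, and your explicit map $\Phi$ merely makes this concrete (including the role of the 0-1 hypothesis in recovering the edge from the pair $(i_n,j_n)$, and of Lemma \ref{0-1_matrix}(1) in supplying the initial edge over $v_0$).

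The worry you raise about the converse of (1) is not just a place where care is needed --- it is a genuine gap, and one the paper does not close either: the paper's proof establishes only that simplicity implies strong connectivity and never addresses the reverse implication. As you observe, a directed walk in $G(A)$ from $a_{i,j}$ to $a_{k,l}$ unpacks to a walk $i\to j\to\cdots\to k\to l$ in the directed graph of $A$ and conversely, so strong connectivity of $G(A)$ is equivalent to irreducibility of $A$; but simplicity of a stationary diagram is equivalent to primitivity, which is strictly stronger. There is no hidden structure in $G$ that upgrades one to the other. For instance, take
$$
A=\left(\begin{array}{ccc} 0&1&1\\ 1&0&0\\ 1&0&0 \end{array}\right).
$$
Every cycle in the directed graph of $A$ has even length, so $A$ is irreducible but not primitive and the diagram (whose path space is a genuine Cantor set here) is not simple; yet the coupled graph on the four vertices $a_{1,2},a_{1,3},a_{2,1},a_{3,1}$ contains the directed cycle $a_{1,2}\to a_{2,1}\to a_{1,3}\to a_{3,1}\to a_{1,2}$ and is strongly connected. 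So the ``only if'' half of (1) is false as stated; it becomes true only if ``simple'' is weakened to irreducibility of $A$, or if an aperiodicity hypothesis (e.g.\ a loop in $G$, as happens in the paper's Example \ref{example 3x3}) is added. Your instinct to hunt for cycles of coprime lengths correctly identifies what is missing, but it is missing from the hypotheses of the proposition, not just from the proof.
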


\begin{proof} (1) Suppose that $B$ is simple. In other words, this means that $A$ is primitive. We need to show that for any two vertices $t, t'  \in T$ there are  paths from $t$ to $t'$ and from $t'$ to $t$. Let $t$ correspond to $a_{i,j}$ and let $t'$ correspond to $a_{i',j'}$. By simplicity of $B$, there exists a finite path $\overline e = (e_1, ... , e_{n})$ in the path space $X_B$ such that $s(e_1) = i, r(e_1) =j$, and $s(e_n) = i', r(e_{n}) = j'$. According to Remark \ref{correspondence E=A}, the path $\overline e$ determines uniquely a sequence of non-zero entries $a_{s(e_1), r(e_1)}, a_{s(e_2), r(e_2)}, ... , a_{s(e_{n}), r(e_{n})}$  of $A$. By Definition \ref{graph G(A)}, this sequence corresponds to a path in $G(A)$ that starts at $t$ and ends at $t'$. Similarly, one can show that there exists a path from $t'$ to $t$.

(2) This statement follows, in fact, from (1) because the same method  can be applied to any infinite path.
\end{proof}

\begin{remark}\label{m-x assoc to G(A)}
Let $\Gamma$ be a directed graph with the set of vertices denoted by $W$. Then one can associate a 0-1 matrix $\overline A = (\overline a_{v,w})$ of the size $|W|\times |W|$, which is usually called an adjacency matrix. By definition, $\overline a_{v,w} =1$ if and only if there exists a directed edge $e$ in $\Gamma$ from the vertex $v$ to the vertex $w$. In the case when $\Gamma = G(A)$ for some 0-1 matrix $A$ (a stationary Bratteli diagram $B$ in other words), we observe that, because of the identification of non-zero entries of $A$ and edges from $E$, the adjacency matrix has the size $|E| \times |E|$ is determined by the rule $(a_{e,f} =1) \Longleftrightarrow (r(e) = s(f))$.
\end{remark}

{\em Question}. It would be interesting to find out what kind of directed strongly connected graphs are isomorphic to graphs obtained from 0-1 stationary simple Bratteli diagrams.

\subsection{Semibranching function systems}  Here we give the definition of a semibranching function system following \cite{MaPa11}.

\begin{definition}\label{s.f.s.} (1) Let $(X,\mu)$ be a probability measure space with non-atomic measure $\mu$. We consider a finite family $\{\sigma_i : i\in \Lambda\}$ of one-to-one $\mu$-measurable maps $\sigma_i$ defined on a subset $D_i$ of $X$ and let $R_i = \sigma_i(D_i)$. The family  $\{\sigma_i\}$ is called a \textit{semibranching function system  (s.f.s.)} if the following conditions hold:

(i) $\mu(R_i \cap R_j) = 0$ for $i\neq j$ and $\mu(X\setminus \bigcup_{i\in \Lambda} R_i) = 0$;

(ii) $\mu \circ \sigma_i << \mu$ and
$$
\rho{_\mu}(x, \sigma_i) := \frac{d\mu\circ \sigma_i}{d\mu}(x) > 0 \ \
 \mbox{for $\mu$-a.e. $x\in D_i$};
$$

(iii) there exists an endomorphism $\sigma : X \to X$ (called a \textit{coding map}) such that $\sigma\circ \sigma_i(x) = x$ for $\mu$-a.e. $x\in D_i, \ i \in \Lambda$.

If, additionally to properties (i) - (iii), we have $\bigcup_{i\in \Lambda}D_i = X$ ($\mu$-a.e.),  then  the s.f.s. $\{\sigma_i : i\in \Lambda\}$ is called \textit{saturated.}

(2) We also say that a saturated s.f.s. satisfies \textit{condition (C-K)} if for any $i\in \Lambda$ there exists a subset $\Lambda_i \subset \Lambda$ such that up to a set of measure zero
$$
D_i = \bigcup_{j\in \Lambda_i} R_j.
$$
In this case, condition (C-K) defines a 0-1 matrix $\wt A$ by the rule:
\begin{equation}\label{C-K defines A}
\wt a_{i,j} =1 \ \ \Longleftrightarrow \ \ j\in \Lambda_i, \ \ i \in \Lambda.
\end{equation}
Then the matrix $\wt A$ is of the size $|\Lambda|\times |\Lambda|$.
\end{definition}
\medskip

The following two examples of  s.f.s. (see Examples \ref{example of s.f.s.} and \ref{ex s.f.s. topol Markov chain}), which are generated by a stationary Bratteli diagram, will play the key role in our constructions.

\begin{example}\label{example of s.f.s.} Let $B$ be a stationary simple 0-1 Bratteli diagram. We construct a s.f.s. $\wt \Sigma$ defined on the path space $X_B$ endowed with a Markov measure $m$ which, as we will see below, must have some additional properties to satisfy Definition \ref{s.f.s.}. This s.f.s. is determined by the edge set $E$ which is the set of edges between any two consecutive levels of $B$. This set plays  the role of the index set  $\Lambda$ that was used in Definition \ref{s.f.s.}. For any $e \in E$, we denote
\begin{equation}\label{defn of D_e}
D_e = \{y = (y_i) \in X_B : s(y_1) = r(e)\},
\end{equation}
\begin{equation}\label{defn of R_e}
 R_e = \{y = (y_i) \in X_B : y_1 = e\}.
\end{equation}
Then we see that $D_e$ depends on $r(e)$ only, so that $D_e = D_{e'}$ if $r(e) = r(e')$, and $D_e \cap D_{e'} = \emptyset$ if  $r(e) \neq r(e')$.

To define a s.f.s.  $\{\sigma_e : e \in E\}$ we consider the map
\begin{equation}\label{defn of sigma_e}
     \sigma_e (y) :=  (y_0', e, y_1, y_2, .... )
\end{equation}
is a one-to-one continuous map from $D_e$ onto $R_e$. Here the edge $y_0'$ is uniquely determined by $e$ as the edge connecting $v_0$ and $s(e)$.   Let  $\sigma : X_B \to X_B$ be also defined as follows: for any $x = (x_i)_{i \geq 0} \in X_B$
\begin{equation}\label{defn of sigma}
     \sigma(x) :=  (z_0', x_2, x_3, ...)
\end{equation}
where again $z'_0$ is uniquely determined by the vertex $s(x_2)$.
Then it follows from (\ref{defn of sigma_e}) and (\ref{defn of sigma}) that the map $\sigma$ is onto and satisfies the relation
$$
\sigma\circ\sigma_e(x) = x,\ \ \  x \in D_e;
$$
 hence $\sigma$ is a coding map.

We immediately deduce from (\ref{defn of R_e}) that $\{R_e : e\in E\}$ constitutes a partition of $X_B$ into clopen sets. Relation (\ref{defn of D_e}) implies that the s.f.s. $\{\sigma_e : e \in E\}$ is a saturated s.f.s. Moreover, we claim that it satisfies condition (C-K), that is
\begin{equation}\label{checking C-K}
     D_e = \bigcup_{f : s(f) = r(e)} R_f,\ \ \ e \in E.
\end{equation}
Indeed, $y = (y_i) \in D_e \ \Longleftrightarrow \ s(y_1) = r(e) \ \Longleftrightarrow \ \exists f = y_1\ \mbox{such\ that}\ y = (y_0, f, y_2, ...) \ \Longleftrightarrow \ y \in \bigcup_{f : s(f) = r(e)} R_f$.

Relation (\ref{checking C-K}) shows that the non-zero entries of the  0-1 matrix $\wt A$ from Definition \ref{s.f.s.} are defined by the rule:
$$
(\wt a_{e,f} =1) \ \Longleftrightarrow \ (s(f) = r(e)) \ \Longleftrightarrow \ ((e, f) \in \mathcal L(E)).
$$

In order to clarify the nature of the matrix $\wt A$, we observe that $\wt A$ coincides with the adjacency matrix of the graph $G(A)$ constructed by the initial matrix $A$ (see Remark \ref{m-x assoc to G(A)}).

Next, we observe that $\sigma: X_B \to X_B$ is a finite-to-one continuous map. Indeed, given $x = (x_i) \in X_B$, one can verify that
$$
|\sigma^{-1}(x)| = |r^{-1}(r(x_1))| = \sum_{u\in V} f_{v,u}.
$$
In other words, we see that $\sigma(x) = \sigma(y)$ if and only if $r(x_1) = r(y_1)$ where $x = (x_i), y = (y_i)$, and   if $x \in D_{e_1} = \cdots =  D_{e_k}$, then $\sigma^{-1}(x) = \{z_1, ... , z_k\}$ where $z_i \in R_{e_i}$.

Thus, it remains to find out under what conditions property (ii) of Definition \ref{s.f.s.} holds. In other words, the Radon-Nikodym derivative $\rho_m(x, \sigma_e)$ must be  positive on the set $D_e$ for $\mu$-a.e. $x\in D_e$. Since the path space $X_B$ is naturally partitioned into a refining sequence of clopen partitions formed by cylinder sets of fixed length, we can apply de Possel's theorem (see, for instance, \cite{SG77}). Let $\mathcal Q_n$ be a partition of $X_B$ into cylinder sets $[\overline e]$ where each finite path $\overline e$ has length $n$. Suppose that $m$ is a Borel probability measure on $X_B$ and $\varphi$ is a measurable one-to-one map on $X_B$. By de Possel's theorem, we have for $\mu$-a.a $x$,
$$
\rho(x, \varphi) = \lim_{n\to \infty} \frac{m(\varphi([\overline e(n)])}{[\overline e(n)]}
$$
where $\{x\} = \bigcap_n [\overline e(n)]$.

We first consider  the case when $m$ is the unique $\mathcal R$-invariant measure $\mu$.
If $\overline f = (f_0, f_1, ... , f_n) \in D_e$, then $\sigma_e(\overline f) = (f'_0, e, f_1, ... , f_n)$. By (\ref{inv_mesaure}), we obtain for $\mu$-a.e. $x\in D_e$
$$
\mu([\overline f ]) = \frac{\mathbf x_{r(\overline f)}}{\lambda^{n-1}},\ \ \ \mu(\sigma_e([\overline f ])) = \frac{\mathbf x_{r(\overline f)}}{\lambda^{n}}
$$
and therefore
\begin{equation}\label{RN for mu}
     \rho_{\mu}(x, \sigma_e) = \lambda^{-1}.
\end{equation}

In case of a stationary Markov measure $\nu = \nu(P)$, we see from (\ref{nu(P)}) that if
$\{x\} =\bigcap_n [f(n)] \in D_e$ then
$$
\nu([\overline f(n)]) = p^{(0)}_{v_0, f_0}p_{s(f_1), f_1} \cdots p_{s(f_n), f_n},
$$
$$
\nu(\sigma_e([\overline f(n)])) = p^{(0)}_{v_0, f'_0}p_{s(e), e}p_{s(f_1), f_1} \cdots p_{s(f_n), f_n},
$$
and finally
\begin{equation}\label{RN for nu}
 \rho_{\nu}(x, \sigma_e) = \frac{p^{(0)}_{v_0, f'_0}p_{s(e),e}}{p^{(0)}_{v_0, f_0}}
\end{equation}
(the meaning of $f'_0$ was explained above).

It follows from (\ref{RN for mu}) and (\ref{RN for nu}) that the Radon-Nikodym derivatives  $\rho_{\mu}$ and $\rho_{\nu}$ are positive on $D_e$ if and only if all entries of the vector $p_0$ are positive. The latter, in particular, means that the support of $\nu$ is the whole space $X_B$.

In case of an arbitrary Markov measure $m$, we obtain more restrictive conditions under which the Radon-Nikodym derivative  $\rho_m(x, \sigma_e)$ is positive on $D_e$. If $x = (x_i)\in D_e$ is determined by the sequence $[\overline f(n)] = [(f_0, f_1, ... , f_n)]$ such that $x_i = f_i, i= 0,1, ... ,n$, we see that
$$
m([f(n)]) = p^{(0)}_{v_0, f_0}p^{(1)}_{s(f_1), f_1} \cdots p^{(n)}_{s(f_n), f_n}
$$
and
$$
m(\sigma_e([f(n)])) = p^{(0)}_{v_0, f'_0}p^{(1)}_{s(e), e}p^{(2)}_{s(f_1), f_1} \cdots p^{(n+1)}_{s(f_n), f_n}.
$$
\begin{lem}\label{RN for m}
Let $m$ be a Markov measure on the path space of a stationary 0-1 Bratteli diagram, then $\rho_m(x, \sigma_e) >0$ on $D_e$ if and only if
\begin{equation}\label{product convergence}
   0<  \prod_{i=1}^\infty \frac{p^{(i+1)}_{s(f_i), f_i}}{p^{(i)}_{s(f_i), f_i}}  <\infty
\end{equation}
for any $x = \bigcap_n[\overline f(n)] \in D_e$
\end{lem}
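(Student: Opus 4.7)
The plan is to apply de Possel's theorem in exactly the same way as was already done for the invariant measure $\mu$ and for the stationary Markov measure $\nu(P)$ just above, and then to substitute and cancel Markov factors. Fix $x = (x_i) \in D_e$, and let $[\overline f(n)]$ be the decreasing sequence of cylinder sets containing $x$, where $\overline f(n) = (f_0, f_1, \dots, f_n)$ and $x_i = f_i$. By de Possel's theorem,
\[
\rho_m(x,\sigma_e) \;=\; \lim_{n\to\infty} \frac{m(\sigma_e([\overline f(n)]))}{m([\overline f(n)])}
\]
for $m$-almost every $x$.

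The next step is purely algebraic. Starting from the two explicit product formulas for $m([\overline f(n)])$ and $m(\sigma_e([\overline f(n)]))$ displayed just before the statement of the lemma, I would reindex the numerator so that each factor labeled by level $i{+}1$ on the edge $f_i$ is paired against the denominator factor labeled by level $i$ on the same edge. After this pairing the ratio collapses to
\[
\frac{m(\sigma_e([\overline f(n)]))}{m([\overline f(n)])} \;=\; \frac{p^{(0)}_{v_0,f'_0}\, p^{(1)}_{s(e),e}}{p^{(0)}_{v_0,f_0}} \;\prod_{i=1}^{n} \frac{p^{(i+1)}_{s(f_i),f_i}}{p^{(i)}_{s(f_i),f_i}}.
\]
The prefactor depends only on $f_0$ and $e$; it is a finite and strictly positive constant whenever both $x$ and $\sigma_e(x)$ lie in the support of $m$, which is the only regime in which the question of positivity of $\rho_m$ is nonvacuous (in view of the positivity condition (a) in (\ref{defn of P_n}) applied to $f'_0$ and to $e$, and the hypothesis that $p^{(0)}_{v_0,f_0}>0$ at our typical point $x$).

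Consequently, passing to the limit $n\to\infty$, the defining limit for $\rho_m(x,\sigma_e)$ exists and is strictly positive and finite if and only if the partial products $\prod_{i=1}^{n} p^{(i+1)}_{s(f_i),f_i}/p^{(i)}_{s(f_i),f_i}$ converge to a value in $(0,\infty)$, which is precisely condition (\ref{product convergence}). I expect the only subtle step to be the interplay between the ``$m$-almost every $x$'' clause in de Possel's theorem and the pointwise infinite-product condition in the statement: the equivalence should be read as holding for $m$-a.e.\ path $x\in D_e$, and the finiteness of the product corresponds to $\rho_m<\infty$, which is automatic from $\rho_m\in L^1(D_e,m)$ off a null set but is tested pointwise along the particular path $x$. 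This bookkeeping, rather than the telescoping computation, is the one place the argument requires care.
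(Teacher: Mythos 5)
Your proposal is correct and follows essentially the same route as the paper: the paper's proof likewise applies de Possel's theorem to the ratio $m(\sigma_e([\overline f(n)]))/m([\overline f(n)])$, cancels the Markov factors to obtain the prefactor $p^{(0)}_{v_0,f'_0}p^{(1)}_{s(e),e}/p^{(0)}_{v_0,f_0}$ times the partial products $\prod_{i=1}^{n}p^{(i+1)}_{s(f_i),f_i}/p^{(i)}_{s(f_i),f_i}$, and passes to the limit. Your additional remark about the almost-everywhere clause is a reasonable refinement but not a departure from the paper's argument.
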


\begin{proof} This results follows immediately from the relation
\begin{eqnarray*}
  \frac{dm\circ\sigma_e}{dm}(x) &=& \lim_{n\to \infty}\frac{m(\sigma_e([f(n)]))}{m([f(n)])} \\
   &=& \lim_{n\to\infty} \frac{p^{(0)}_{v_0, f'_0}p^{(1)}_{s(e), e}p^{(2)}_{s(f_1), f_1} \cdots p^{(n+1)}_{s(f_n), f_n} }{p^{(0)}_{v_0, f_0}p^{(1)}_{s(f_1), f_1} \cdots p^{(n)}_{s(f_n), f_n}}\\
   &=& \frac{ p^{(0)}_{v_0, f'_0}p^{(1)}_{s(e), e}}{p^{(0)}_{v_0, f_0} } \prod_{i=1}^\infty \frac{p^{(i+1)}_{s(f_i), f_i}}{p^{(i)}_{s(f_i), f_i}}.
\end{eqnarray*}
\end{proof}

A Markov measure satisfying (\ref{product convergence}) is called a quasi-stationary measure. We remark that condition (\ref{product convergence}) appeared first in \cite{DJ14b} in a different context.
\end{example}

Based on the above discussion, we can summarize the mentioned results  in the following theorem.

\begin{thm}\label{example summary}
Given a 0-1 stationary simple Bratteli diagram $B$ with edge set $E$, the collection of maps $\{\sigma_e : D_e \to R_e\},\ e \in E$,  defined as in Example \ref{example of s.f.s.} on the space  $(X_B, m)$, forms  a saturated s.f.s. $\wt \Sigma$ satisfying (C-K) condition where the Markov measure $m$ is either the unique $\mathcal R$-invariant measure $\mu$, or a stationary Markov measure $\nu$ of full support, or  a quasi-stationary measure Markov measure.
\end{thm}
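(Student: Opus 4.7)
The plan is to verify each of the defining conditions of a saturated s.f.s.\ satisfying (C-K) as listed in Definition \ref{s.f.s.}, for the family $\widetilde\Sigma = \{\sigma_e : e \in E\}$ introduced in Example \ref{example of s.f.s.}. Since most of the needed computations are already carried out in that example, the proof reduces to an assembly of those verifications, with the only measure-dependent piece being the positivity of the Radon-Nikodym derivative.

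First I would dispose of the measure-free conditions. By construction in (\ref{defn of R_e}), the cylinders $R_e = \{y \in X_B : y_1 = e\}$ form a clopen partition of $X_B$, which gives condition (i) with exact (not just $m$-a.e.) disjointness. Saturation $\bigcup_{e \in E} D_e = X_B$ follows from (\ref{defn of D_e}): for any $y = (y_i) \in X_B$, the vertex $v = s(y_1)$ is hit by some edge $e \in E$ with $r(e) = v$ (here one uses that the diagram has at least one edge into each vertex of $V_n$ for $n \geq 1$), so $y \in D_e$. The coding map $\sigma$ of (\ref{defn of sigma}) is easily checked to be an endomorphism of $X_B$ satisfying $\sigma \circ \sigma_e(x) = x$ for every $x \in D_e$ by a direct substitution in (\ref{defn of sigma_e}) and (\ref{defn of sigma}), which yields condition (iii). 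Finally the (C-K) condition is the content of (\ref{checking C-K}), with the resulting 0-1 matrix $\widetilde A$ given by $\widetilde a_{e,f} = 1 \Longleftrightarrow (e,f) \in \mathcal L(E)$.

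The substantive step is condition (ii): absolute continuity $m \circ \sigma_e \ll m$ and positivity of $\rho_m(x, \sigma_e)$ on $D_e$. Absolute continuity is automatic, since $\sigma_e$ sends a cylinder $[\overline f(n)]$ into a cylinder $\sigma_e([\overline f(n)])$ of length one longer, and in each of our three measure classes the measure of any nonempty cylinder is strictly positive. For the Radon-Nikodym derivative, de Possel's theorem gives
$$
\rho_m(x, \sigma_e) = \lim_{n \to \infty} \frac{m(\sigma_e([\overline f(n)]))}{m([\overline f(n)])}, \qquad \{x\} = \bigcap_n [\overline f(n)],
$$
for $m$-a.e.\ $x \in D_e$, and the task is to verify that this limit is positive for each of the three measure classes. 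For the unique $\mathcal R$-invariant measure $\mu$, formula (\ref{inv_mesaure}) yields $\rho_\mu(x,\sigma_e) = \lambda^{-1} > 0$ as in (\ref{RN for mu}). For a stationary Markov measure $\nu = \nu(P)$ of full support, formula (\ref{nu(P)}) gives (\ref{RN for nu}), which is strictly positive precisely because the entries of the initial vector $p^{(0)}$ (and hence $p^{(0)}_{v_0, f_0'}$) are positive and $p_{s(e),e} > 0$ by (\ref{defn of P_n})(a) together with $s(e) \in V$. For a quasi-stationary Markov measure $m = m(P_n)$, the telescoping of the ratio leaves the infinite product displayed in Lemma \ref{RN for m}, and the defining condition (\ref{product convergence}) of quasi-stationarity is exactly what is needed for the limit to exist and to be strictly positive on $D_e$.

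Assembling these four verifications yields that $\widetilde\Sigma$ is a saturated s.f.s.\ satisfying (C-K) on $(X_B, m)$ in each of the three cases. The main technical obstacle is actually the bookkeeping for the quasi-stationary case: one must be careful that the ratio telescopes cleanly, with the finitely many ``surviving'' factors coming from indices shifted by the insertion of $e$ in $\sigma_e$, so that convergence of the infinite tail product (\ref{product convergence}) genuinely controls the limit. The remaining conditions are combinatorial consequences of the path-space structure of a simple 0-1 stationary diagram and do not present any real difficulty.
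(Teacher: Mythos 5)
Your proposal is correct and follows essentially the same route as the paper: the paper explicitly presents this theorem as a summary of the verifications carried out in Example \ref{example of s.f.s.} (partition by the clopen sets $R_e$, saturation and (C-K) via (\ref{checking C-K}), the coding map identity, and the de Possel computation of $\rho_m(x,\sigma_e)$ in the three measure classes, culminating in Lemma \ref{RN for m} for the quasi-stationary case). Your assembly of those steps, including the observation that full support of $\nu$ is exactly what makes (\ref{RN for nu}) positive, matches the paper's argument.
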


\begin{example}\label{ex s.f.s. topol Markov chain} We now give the other example of a s.f.s., denoted by $\Sigma$, which is naturally related to a stationary 0-1 Bratteli diagram $B$ with the incidence matrix $A$. This sort of examples was first  considered in \cite{MaPa11}.

Let $A = (a_{i,j}  :  i, j =1, ... , n)$ be a 0-1 primitive matrix, and let $X_A$ be the corresponding topological Markov chain where  $(x \in X_A) \ \Longleftrightarrow\ (x = (x_j)_{j\geq 1} : a_{x_j, x_{j+1}} =1)$. For each $i = 1, ... , n$,  we define
$$
D_i = \{ x = (x_k) \in X_A : a_{i, x_1} =1 \}, \ \ R_i = \{ x = (x_k) \in X_A : x_1 = i\},
$$
$$
\sigma_i : D_i \to R_i, \ \ \ \sigma_i(x) = ix,
$$
and
$$
\sigma : X_A \to X_A, \ \ \ \sigma(x_1,x_2, ... , x_k, ...)  = (x_2, x_3, ... , x_k, ...).
$$
Then $D_i = \bigcup_{j : a_{i,j} =1} R_j$ for each $i$. This means that the matrix associated to the s.f.s. $\Sigma$ according to (\ref{C-K defines A}) is exactly $A$.

In order to finish the definition of the s.f.s. $\Sigma$, one needs to specify a measure $m$ on $X_A$ satisfying Definition \ref{s.f.s.}. To do this, one can take, for instance, the Hausdorff measure (as was done in \cite{MaPa11}) or some Markov measures analogous to those  considered in Example \ref{example of s.f.s.}. We will not discuss the details here.
\end{example}

\begin{remark}\label{discussion of the two exs of sfs}
In Examples \ref{example of s.f.s.} and \ref{ex s.f.s. topol Markov chain} we defined two s.f.s., $\wt \Sigma$ and $\Sigma$, built by a  0-1 stationary Bratteli diagram $B$ with the incidence matrix $A$. Here we discuss some relations between these two s.f.s. To use more consistent notation, we will add the symbol `tilde', to any object of $\wt\Sigma$.

We first observe that these s.f.s. can be thought to be defined on the same space $X_B$ because the sets $X_A$ and $X_B$ are naturally homeomorphic. We have $X_A = (x_1, x_2,  \cdots ) \mapsto (e_0, e_1, e_2, \cdots ) \in X_B$ where the edge $e_i$ is uniquely defined by the properties $s(e_i) = x_i, r(e_i) = x_{i+1}, i \geq 0$, with $x_0 = v_0$. Thus, we identify these spaces of infinite paths and consider  $\wt \Sigma$ and $\Sigma$ as the s.f.s. defined on the same space $X_B$.

We recall that the index sets of $\wt \Sigma$ and $\Sigma$ are the edge set $E$ and the vertex set $V$ of $B$, respectively. It follows from the definitions of $\wt \Sigma$ and $\Sigma$, that for every $i\in V$
$$
R_i = \bigcup_{e \in s^{-1}(i)} \wt R_e, \ \ \ D_i = \bigcup_{e : a_{i,s(e)} =1} \wt D_{e}.
$$
Next, we see that the transformations $\wt \sigma$, $\sigma$, $\wt \sigma_e$, and $\sigma_i$ are related as follows:
$$
 \sigma_i|_{\wt D_e} = \wt\sigma_e,\ \ s(e) = i \in V,
$$
and $\sigma = \wt \sigma$. Moreover, it follows from the above relations that if $\mu$ is a measure on $X_B$ satisfying (ii) of Definition \ref{s.f.s.} for $\wt\Sigma$, then the measure $\mu$ satisfies  the same property for the s.f.s. $\Sigma$.

The relations between the objects of  $\wt\Sigma$ and $\Sigma$ show that $\wt\Sigma$ refines the subsets that constitute $\Sigma$, and moreover the restriction of the $\sigma_i$ on the corresponding subsets $\wt D_e$ coincides with $\wt\sigma_e$. In this case we will say that $\wt\Sigma$ is a {\em refinement} of $\Sigma$.

In order to illustrate this notion, we consider the example of Bratteli diagram with incidence matrix
$$ A =\left(
                    \begin{array}{ccc}
                      1 & 1 & 0 \\
                      0 & 1 & 1 \\
                      1 & 0 & 1 \\
                    \end{array}
                  \right).
$$
Let $1, 2, 3$ stand for vertices of $B$. Then we have the following relations
$D_1 = \wt D_1 \cup \wt D_2$, $D_2 = \wt D_2 \cup \wt D_3$, and $D_3 = \wt D_1 \cup \wt D_3$ (we remind that $\wt D_e = \wt D_{r(e)}$). Similarly, $R_i =
\bigcup_{e : s(e) =i}  \wt R_e$, $i =1, 2, 3$. Finally, if one computes, for instance,  $\sigma_1|_{\wt D_2}$, then it is the same as $\sigma_e$ where $r(e) =2$.

It is clear that the described above situation can happen in more abstract settings when two s.f.s., say $\wt \Gamma = (\wt \gamma_{\wt\omega} : \wt D_{\wt\omega} \to \wt R_{\wt\omega},\ \wt\omega \in \wt\Omega)$ and $\Gamma = ( \gamma_{\omega} :  D_{\omega} \to R_{\omega}, \ \omega \in \Omega  )$, are defined on a measure space $(X, m)$. Then we say that $\wt \Gamma$ {\em refines } $\Gamma$ if the partition of $X$ formed by $(\wt R_{\wt\omega})$ refines that formed by $(R_{\omega})$, and  for every $\omega\in \Omega$
$$
D_\omega = \bigcup_{\wt\omega \in \Lambda_\omega} \wt D_{\wt\omega}, \ \
R_\omega = \bigcup_{\wt\omega \in \Xi_\omega} \wt R_{\wt\omega},
$$
$$
\gamma_\omega |_{\wt D_{\wt\omega}} = \wt\gamma_{\wt\omega}, \ \ \wt\omega \in \Lambda_\omega.
$$

Suppose that the s.f.s. $\Gamma$ and $\wt \Gamma$ both satisfy (C-K) condition,  and therefore they define two matrices $A$ and $\wt A$ respectively.  We will see below (see Theorem \ref{CK-alg repr by sfs}) that any s.f.s. with (C-K) condition  $\Gamma$  generates a representation of $\mathcal  O_A$ on $L^2(X,m)$. In the described case, we get representations of $\mathcal O_A$ and $\mathcal O_{\wt A}$.  It would be interesting to find out how these representations are related each other  when one of these s.f.s. refines the other one. We answer this question for s.f.s. $\wt\Sigma$ and $\Sigma$ in Section \ref{section isomorphic s.f.s.}.
\end{remark}

\subsection{The Cuntz-Krieger algebra $\mathcal O_A$} \label{C-K algebra}

Let $A$ be a primitive $n\times n$ matrix with 0-1 entries $a_{i,j}$. The \textit{Cuntz-Krieger algebra $\mathcal O_A$} is generated by partial isometries $S_1, ... , S_n$ satisfying the relations
\begin{equation}\label{CK relations}
    \sum_{i=1}^n S_iS^*_i =1,\ \ \ \ S^*_iS_i = \sum_{j=1}^n a_{i,j}S_jS^*_j.
\end{equation}

The Cuntz algebra $\mathcal O_N$ corresponds to the special case of $A$ when all entries are ones, and $S_iS_i^* =1$ for all $i$, i.e., the generators are isometries, as opposed to {\em partial} isometries.

Let $I = i_1\cdots i_k$ be a finite word over  the alphabet $\{1, ..., n\}$. Define  $S_I = S_{i_1}\cdots S_{i_k}$. Let $\mathcal D_A$ be a subalgebra of $\mathcal O_A$ generated by $\{S_IS^*_I : I\ \mbox{is\ any\ finite\ word}\}$. It is well known (see, for instance, \cite{CuKr80}) that $\mathcal D_A$ is isomorphic to the commutative $C^*$-algebra $C(X_A)$ of the complex-valued functions defined on the space $X_A$ of infinite path of the topological Markov chain, i.e.,  $ X_A = ((x_j)_{j\geq 0}  : a_{x_j, x_{j+1}} =1).$

It turns out that any s.f.s. satisfying (C-K) condition is a source for construction of representations of the Cuntz=Krieger algebra.
The next theorem shows how such representations of $\mathcal O_A$ are arisen.

\begin{thm}[\cite{MaPa11}]\label{CK-alg repr by sfs} Let $\{\sigma_i : i \in \Lambda\}$ be a s.f.s. with coding map $\sigma$ defined on a probability measure space $(X,m)$. Suppose that it satisfies  condition (C-K). Let $\wt A$ be a 0-1 matrix defined by relation (\ref{C-K defines A}).  Then the operators $T_i = T_i(m)$ and $T^*_i = T^*_i(m)$ acting on $L^2(X,m)$ by formulas
\begin{equation}\label{T_i formula}
     (T_i\xi)(x) = \chi_{R_i}(x) \rho_m(\sigma(x), \sigma_i)^{-1/2} \xi(\sigma (x)),  \ i\in \Lambda,  \xi\in L^2(X,m)
\end{equation}
and
\begin{equation}\label{T^*_i formula}
  (T^*_i\xi)(x) = \chi_{D_i}(x) \rho_m(x, \sigma_i)^{1/2} \xi(\sigma_i (x)),  \ i\in \Lambda, \  \xi\in L^2(X,m)
\end{equation}
satisfy (\ref{CK relations}) and generate a representation $\pi =\pi(m)$ of $\mathcal O_{\wt A}$.
\end{thm}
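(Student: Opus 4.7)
The plan is to verify first that the formulas (\ref{T_i formula}) and (\ref{T^*_i formula}) indeed define mutually adjoint bounded operators on $L^2(X,m)$, and then to check the two Cuntz-Krieger relations (\ref{CK relations}) by direct composition, using the semibranching properties (i)--(iii) of Definition \ref{s.f.s.} and the (C-K) hypothesis in the form of equation (\ref{C-K defines A}).

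For the adjointness, I would compute $\langle T_i \xi, \eta\rangle$ by the change-of-variables $x = \sigma_i(y)$ on $R_i$, using that $d(m\circ \sigma_i)/dm = \rho_m(\cdot,\sigma_i)$ on $D_i$ and that $\sigma\circ\sigma_i = \mathrm{id}$ on $D_i$. The two square-root factors in (\ref{T_i formula}) and (\ref{T^*_i formula}) are chosen precisely so that one factor $\rho_m(y,\sigma_i)^{-1/2}$ combines with the Jacobian $\rho_m(y,\sigma_i)$ from $dm$ to leave $\rho_m(y,\sigma_i)^{1/2}$, matching the adjoint expression. Boundedness (indeed that $T_i$ is a partial isometry) will then follow from the relations below. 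I would also observe that since $\mu(R_i\cap R_j)=0$ and $\mu(X\setminus\bigcup R_i)=0$ by (i), the characteristic functions $\chi_{R_i}$ sum to the constant function $1$ in $L^\infty(X,m)$.

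Next I would compute $T_iT_i^*$ and $T_i^*T_i$ by straight substitution. For $T_iT_i^*$, applying the two formulas in succession produces the factor $\rho_m(\sigma(x),\sigma_i)^{-1/2}\rho_m(\sigma(x),\sigma_i)^{1/2} = 1$ on $R_i$, together with $\xi(\sigma_i(\sigma(x))) = \xi(x)$ by property (iii), and the indicator $\chi_{R_i}(x)\chi_{D_i}(\sigma(x))$ collapses to $\chi_{R_i}$ because $\sigma(R_i)\subset D_i$. Hence $T_iT_i^* = M_{\chi_{R_i}}$, and summing over $i\in\Lambda$ gives $\sum_i T_iT_i^* = M_1 = I$ by the saturation above. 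The analogous calculation for $T_i^*T_i$ uses that $\sigma_i(D_i)=R_i$ and $\sigma\circ\sigma_i = \mathrm{id}_{D_i}$, and the two Radon-Nikodym factors again cancel, yielding $T_i^*T_i = M_{\chi_{D_i}}$. Invoking (C-K) in the form $D_i = \bigcup_{j\in\Lambda_i} R_j$ (a disjoint union up to $m$-null sets) and the defining rule $\wt a_{i,j}=1 \Leftrightarrow j\in\Lambda_i$, one gets
\begin{equation*}
T_i^*T_i \;=\; M_{\chi_{D_i}} \;=\; \sum_{j\in\Lambda_i} M_{\chi_{R_j}} \;=\; \sum_{j} \wt a_{i,j}\, T_jT_j^*,
\end{equation*}
which is exactly the second relation in (\ref{CK relations}) for the matrix $\wt A$.

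Finally, the universal property of $\mathcal O_{\wt A}$ (as the $C^*$-algebra on generators and relations (\ref{CK relations}), cf.\ \cite{CuKr80}) yields a $*$-homomorphism $\pi = \pi(m):\mathcal O_{\wt A}\to B(L^2(X,m))$ with $\pi(S_i)=T_i$. The main technical point to watch is the book-keeping with the Radon-Nikodym derivatives together with the almost-everywhere identifications $\sigma\circ\sigma_i=\mathrm{id}_{D_i}$ and $\sigma_i\circ\sigma|_{R_i}=\mathrm{id}_{R_i}$; property (ii) guarantees $\rho_m>0$ almost everywhere on $D_i$ so the $-1/2$ powers make sense, and property (i) plus (C-K) converts the operator identities into the scalar identities $\sum_i\chi_{R_i}=1$ and $\chi_{D_i}=\sum_{j\in\Lambda_i}\chi_{R_j}$. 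I expect this measure-theoretic matching to be the only delicate step; everything else is routine.
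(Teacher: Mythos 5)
Your argument is correct, and it is essentially the standard verification: adjointness via the change of variables $x=\sigma_i(y)$ using $\rho_m(\cdot,\sigma_i)$ as the Jacobian, then $T_iT_i^*=$ multiplication by $\chi_{R_i}$ and $T_i^*T_i=$ multiplication by $\chi_{D_i}$, with saturation giving $\sum_i\chi_{R_i}=1$ and condition (C-K) converting $\chi_{D_i}$ into $\sum_j\wt a_{i,j}\chi_{R_j}$. The paper itself states this theorem as a quoted result from \cite{MaPa11} and supplies no proof, but the computation you outline is precisely the one the authors do carry out for the closely analogous Lemma \ref{repr gener monic system} in Section \ref{section monic}, so your route coincides with theirs.
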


It is worth noting that $T_i(m)T^*_i(m)$ is a projection on $L^2(X,m)$ given by the  multiplication operator by $\chi_{R_i}$, and similarly $T^*_i(m)T_i(m)$ is a projection realized by multiplication by $\chi_{D_i}$. When the measure $m$ is clearly understood we will write simply $T_i$ and $T^*_i$, $i \in \Lambda$.

Theorem \ref{CK-alg repr by sfs} will be applied below in the case when the s.f.s.
is taken from Example \ref{example of s.f.s.}.

\begin{prop}\label{eqv meas - eqv repr} Suppose that a s.f.s. $\{\sigma_i : i \in \Lambda\}$ and $(X,m)$ are  as in Theorem \ref{CK-alg repr by sfs}. Let $m'$ be another probability measure equivalent to $m$. Then the representations $\pi(m)$ and $\pi(m')$ defined as in Theorem \ref{CK-alg repr by sfs} are equivalent.
\end{prop}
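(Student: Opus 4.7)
\medskip

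\textbf{Proof proposal.} The plan is to write down an explicit intertwining unitary between $L^{2}(X,m)$ and $L^{2}(X,m')$ built from the Radon--Nikodym derivative $h := dm'/dm$, and to verify that it conjugates the generators $T_i(m')$ to $T_i(m)$. Since $m \sim m'$, the function $h$ is strictly positive $m$-a.e., so $h^{1/2}$ is well defined and $m$-a.e.\ nonzero. Define
\[
U : L^{2}(X,m') \longrightarrow L^{2}(X,m), \qquad (U\xi)(x) := h(x)^{1/2}\,\xi(x).
\]
A direct change of measure gives $\|U\xi\|_{L^2(m)}^2 = \int h|\xi|^2\,dm = \int |\xi|^2 \,dm' = \|\xi\|_{L^2(m')}^2$, so $U$ is unitary with inverse $(U^{-1}\eta)(x) = h(x)^{-1/2}\eta(x)$.

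The key computation is the transformation law for the Radon--Nikodym cocycles. For a measurable $B \subset D_i$, applying the definition of $\rho_m(\cdot,\sigma_i)$ to both $m$ and $m' = h\,m$ and equating expressions for $m'(\sigma_i(B))$ yields the pointwise identity
\[
\rho_{m'}(x,\sigma_i) \;=\; \frac{h(\sigma_i(x))}{h(x)}\,\rho_{m}(x,\sigma_i) \qquad \text{for $m$-a.e. } x\in D_i.
\]
Substituting $x \mapsto \sigma(x)$ for $x\in R_i$ and using the semibranching relation $\sigma_i(\sigma(x)) = x$ on $R_i$ (which follows from injectivity of $\sigma_i:D_i\to R_i$ together with $\sigma\circ\sigma_i = \mathrm{id}$ on $D_i$), we obtain
\[
\rho_{m'}(\sigma(x),\sigma_i)^{-1/2} \;=\; \frac{h(\sigma(x))^{1/2}}{h(x)^{1/2}}\,\rho_{m}(\sigma(x),\sigma_i)^{-1/2}, \qquad x\in R_i.
\]

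With this in hand, I would then verify the intertwining relation $U\,T_i(m')\,U^{-1} = T_i(m)$ on the generators. For $\eta \in L^{2}(X,m)$, plugging $U^{-1}\eta$ into formula (\ref{T_i formula}) for $T_i(m')$, multiplying by $h(x)^{1/2}$, and using the transformation law above causes the factors of $h$ to cancel, leaving exactly the formula (\ref{T_i formula}) for $T_i(m)$ applied to $\eta$. Since the generators of $\mathcal{O}_{\widetilde A}$ map under $\pi(m')$ to the $T_i(m')$ and under $\pi(m)$ to the $T_i(m)$, this unitary $U$ intertwines the two representations on all of $\mathcal{O}_{\widetilde A}$, establishing $\pi(m) \simeq \pi(m')$.

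The one step that needs a bit of care — and is the main thing to justify beyond bookkeeping — is the pointwise cocycle identity for $\rho_{m'}$ in terms of $\rho_{m}$ and $h$. Once this is nailed down (essentially by the Radon--Nikodym theorem together with $\sigma_i$ being $\mu$-nonsingular), the rest is an algebraic cancellation, so no deeper conceptual obstruction is expected.
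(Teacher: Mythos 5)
Your proposal is correct and follows essentially the same route as the paper: the paper omits a direct proof and instead derives the proposition from the more general Theorem \ref{isomorphic reprs}, whose intertwining operator $(U\xi)(x)=\sqrt{\tfrac{dm'\circ F}{dm}(x)}\,\xi(F(x))$ reduces exactly to your multiplication unitary $(U\xi)(x)=h(x)^{1/2}\xi(x)$ when $X=X'$, the s.f.s.\ coincide, and $F=\mathrm{id}$. Your cocycle identity $\rho_{m'}(x,\sigma_i)=\tfrac{h(\sigma_i(x))}{h(x)}\rho_m(x,\sigma_i)$ and the subsequent cancellation are precisely the specialization of the chain-rule computation carried out in that proof.
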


The \textit{proof} of the proposition is straightforward and contained in a more general result (see Theorem \ref{isomorphic reprs}) proved in the next section so that we can omit it.

Consider  a simple 0-1 stationary Bratteli diagram $B =B(V,E)$. We identify here the set of vertices $V$ with $\{1, ... , n\}$. Then we have two s.f.s. $\Sigma$ and $\wt\Sigma$ described in Examples \ref{ex s.f.s. topol Markov chain} and \ref{example of s.f.s.}.
Let $A$ and $\wt A$ be the corresponding 0-1 matrices defined as in those examples. Thus, we have two Cuntz-Krieger algebras, $\mathcal O_A$ and $\mathcal O_{\wt A}$.

\begin{lem}\label{isom comm algs}
The commutative subalgebras $\mathcal D_A$ and $\mathcal D_{\wt A}$ of $\mathcal O_A$ and $\mathcal O_{\wt A}$ respectively are isomorphic.
\end{lem}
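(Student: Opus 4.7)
The plan is to reduce the lemma to a direct topological identification between the two relevant topological Markov chains. Recall from Subsection 2.5 that $\mathcal D_A$ is canonically isomorphic, via the Gelfand transform on the diagonal generators $S_IS_I^*$, to $C(X_A)$, where $X_A = \{(x_j)_{j\geq 1} : a_{x_j,x_{j+1}}=1\}$; the same construction, applied to the partial isometries $T_e$ generating $\mathcal O_{\wt A}$ and to the edge-indexed $0$-$1$ matrix $\wt A$ defined by the linked-edge rule $\wt a_{e,f}=1 \Longleftrightarrow r(e)=s(f)$, identifies $\mathcal D_{\wt A}$ with $C(X_{\wt A})$, where $X_{\wt A}=\{(e_j)_{j\geq 1}\in E^{\mathbb N} : r(e_j)=s(e_{j+1})\}$. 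So it suffices to produce a homeomorphism $\Phi: X_A \to X_{\wt A}$; the induced pullback $f\mapsto f\circ\Phi$ will be the required $C^{\ast}$-isomorphism $\mathcal D_{\wt A}\to\mathcal D_A$.

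The map $\Phi$ is essentially the edge-graph (line-graph) construction already foreshadowed in Remark \ref{discussion of the two exs of sfs}. Given $x=(x_1,x_2,\ldots)\in X_A$, set $\Phi(x)=(e_1,e_2,\ldots)$, where $e_j$ is the edge of $B$ with $s(e_j)=x_j$ and $r(e_j)=x_{j+1}$. This is well-defined precisely because $B$ is a $0$-$1$ stationary diagram: the condition $a_{x_j,x_{j+1}}=1$ guarantees the existence of such an edge $e_j$, and the $0$-$1$ assumption forces it to be unique. By construction $r(e_j)=x_{j+1}=s(e_{j+1})$, so $(e_1,e_2,\ldots)\in X_{\wt A}$. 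Conversely, define $\Psi: X_{\wt A}\to X_A$ by $\Psi(e_1,e_2,\ldots)=(s(e_1),s(e_2),\ldots)$; the linked-pair relation yields $a_{s(e_j),s(e_{j+1})}=a_{s(e_j),r(e_j)}=1$, so $\Psi$ lands in $X_A$. A straightforward check shows $\Psi\circ\Phi=\mathrm{id}_{X_A}$ and $\Phi\circ\Psi=\mathrm{id}_{X_{\wt A}}$, again using that $B$ has at most one edge between any two vertices of consecutive levels. Continuity in both directions is immediate: cylinder sets in $X_A$ of length $n$ correspond to cylinder sets in $X_{\wt A}$ of length $n-1$ (or $n$), so $\Phi$ and $\Psi$ are continuous on clopen generating bases of the Cantor topology, hence everywhere.

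Putting these steps together gives a commutative diagram whose outer arrows are $C^{\ast}$-isomorphisms,
\begin{equation*}
\mathcal D_{\wt A} \;\cong\; C(X_{\wt A}) \;\xrightarrow{\;\Phi^{\ast}\;}\; C(X_A) \;\cong\; \mathcal D_A,
\end{equation*}
proving the lemma. There is no serious obstacle: the only subtle ingredient is the appeal to Lemma \ref{0-1_matrix}, which reduces everything to the $0$-$1$ case and thereby makes the vertex-path $\longleftrightarrow$ edge-path correspondence bijective. If one wished to keep track of the explicit generator-level isomorphism, one would send the projection $S_IS_I^{\ast}$ associated with an admissible word $I=(i_1,\ldots,i_k)$ in the vertex alphabet to the projection $T_JT_J^{\ast}$ associated with the unique corresponding admissible word $J=(e_1,\ldots,e_{k-1})$ in the edge alphabet, which is exactly what $\Phi^{\ast}$ implements on characteristic functions of cylinder sets.
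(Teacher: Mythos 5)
Your proposal is correct and follows essentially the same route as the paper: both rest on the identification $\mathcal D_A\cong C(X_A)$, $\mathcal D_{\wt A}\cong C(X_{\wt A})$, and the vertex-path $\leftrightarrow$ edge-path bijection made possible by the $0$-$1$ (unique-edge) assumption. The only cosmetic difference is that the paper routes the identification through $X_B$ and phrases it on characteristic functions of cylinder sets, whereas you build the homeomorphism $\Phi:X_A\to X_{\wt A}$ directly and pull back; these are the same argument.
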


 \begin{proof} We observe that the commutative $C^*$-algebras $C(X_A)$ and $C(X_B)$ are naturally isomorphic. Indeed, we can identify the characteristic functions of cylinder sets from $C(X_A)$ and $C(X_B)$ by the following rule. Let $i_1, ... , i_{n+1}$ be a finite sequence of vertices of $B$ such that $a_{i_k, i_{k+1}} = 1,\ k =1, ... , n$. Then we associate to every pair $(i_k, i_{k+1})$ the uniquely determined edge $e_k$ in $E$ such that $s(e_k) = i_k, r(e_k) = i_{k+1}$. This defines a finite path in $X_B$; we can think that this path begins at $v_0$ since the edge $e_0$ is completely determined by $i_1 = s(e_1)$. It follows that  a one-to-one correspondence
$$
\chi_{[i_1, ... , i_n]} \ \longleftrightarrow \ \chi_{[e_0, e_1, ... , e_n]}
$$
between the characteristic functions of the cylinder sets is well defined and can be extended by linearity on the algebra generated by characteristic functions. Moreover, this algebra is dense in the space of continuous functions.

Next, we  recall that the entries of 0-1 matrix $\wt A$ is enumerated by edges from $E$, so that we can consider the topological Markov chain $X_{\wt A} = \{(e_i) : \wt a_{e_i, e_{i+1}} = 1\}$. It is obvious that there is one-to-one correspondence between elements of $X_{\wt A}$ and $X_B$ because $(x_{e_i}) \in X_B$ if and only if $s(e_{i+1}) = r(e_i)$ if and only if $\wt a_{e_i, e_{i+1}} = 1$. The remaining argument is clear.

\end{proof}

%%%%%%%%%%%%%%%%

\section{Isomorphic semibranching function systems}

From now on, we will use the following \textit{convention}: all relations on a measure space between functions, transformations, sets, etc. are understood as $\mod 0$ relations; that is they are true up to a set of measure zero. For instance, a measurable map $F : (X, m) \to (X', m')$ is said to be ``onto'' if $m'(X' \setminus F(X)) =0$. Because these properties are obvious as a rule, we  usually omit the words ``almost everywhere'' and $\mod 0$ notation.

\begin{definition}\label{isomorphic sfs}
Let $\{\sigma_i : i \in \Lambda\}$ and $\{\sigma'_i : i \in \Lambda\}$ be two saturated s.f.s. defined on measure spaces
$(X,m)$ and $(X',m')$ respectively. Let  $\sigma : X\to X$ and $\sigma' : X' \to X'$ be the corresponding coding maps. Suppose that there is a family of measurable maps $\{\varphi_i : i\in \Lambda\}$ defined on $R_i \subset X$  such that for all $i \in \Lambda$:

(a) $\varphi_i : R_i \to R'_i $ is one-to-one and onto

(b) $(m'\circ \varphi_i)|_{R_i} \sim  \mu |_{R_i}$, that is $\dfrac{dm'\circ \varphi_i}{dm}(x) > 0$ if and only if $x\in R_i$;

(c) $\varphi_i \circ \sigma_i (x) = \sigma_i' \circ \Phi_i (x)$, $x\in D_i$.\\
Here the map $\Phi_i : D_i \to D'_i$ is defined by the following rule:
$$
\Phi_i(x) = \varphi_j(x), \ \ \forall x\in R_j, \ j\in \Lambda_i,
$$
where $D_i = \bigcup_{j\in \Lambda_i} R_j$. Then we say that $\{\sigma_i : i \in \Lambda\}$ and $\{\sigma'_i : i \in \Lambda\}$ are isomorphic s.f.s.
\end{definition}

\begin{remark}\label{map F}
(1) It follows immediately that, in conditions of Definition \ref{isomorphic sfs}, the collection of maps $\{\varphi_i : i \in \Lambda\}$ determines uniquely a one-to-one onto map $F : X \to X'$ such that
$$
F(x) = \varphi_i(x), \ \ \ x\in R_i, \ \ i \in \Lambda.
$$
Moreover, $\mu'\circ F \sim \mu$ and for $x\in R_i$
$$
\frac{dm'\circ F }{ dm}(x) = \frac{dm'\circ \varphi_i }{dm }(x).
$$
Since $\{R_i : i\in \Lambda\}$ forms a partition of $X$, the Radon-Nikodym derivative $\dfrac{dm'\circ F }{ dm}(x)$ is positive on $X$.

(2) We notice that conditions (a) - (c) of Definition \ref{isomorphic sfs} can be rewritten in terms of $F$; for example, (c) looks as follows:  $F\circ \sigma_i (x) = \sigma_i' \circ F (x)$, $x\in D_i$.

(3) (C-K) condition is invariant with respect to an isomorphism of s.f.s. $\{\sigma_i : i \in \Lambda\}$ and $\{\sigma'_i : i \in \Lambda\}$.
\end{remark}

We will use the following lemma below.

\begin{lem}\label{properties of sfs} Given a s.f.s. as in Definition \ref{s.f.s.}, the following statements hold: for any $i \in \Lambda$
\begin{eqnarray*}
  (1)  & \sigma(R_i) = D_i,\  \mbox{{\rm{and}}} \ \sigma(y) = \sigma_i^{-1}(y),\ \ y\in R_i; \\
  (2) & \Phi_i\circ \sigma(x) = \sigma'\circ\varphi_i(x), \ x\in R_i;\\
  (3) & \dfrac{ dm\circ \sigma}{dm }(z) = \rho_m(\sigma (z), \sigma_i)^{-1}.
\end{eqnarray*}
\end{lem}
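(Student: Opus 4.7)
The strategy is to unwind the definitions, with part (1) supplying the key structural fact that $\sigma_i$ and $\sigma|_{R_i}$ are mutually inverse bijections between $D_i$ and $R_i$; parts (2) and (3) then reduce to applying this fact together with condition (c) of Definition \ref{isomorphic sfs} and a single change-of-variables respectively.

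For (1), property (iii) of Definition \ref{s.f.s.} gives $\sigma\circ\sigma_i = \mathrm{id}_{D_i}$. Since $\sigma_i$ is injective by assumption and $R_i=\sigma_i(D_i)$ by definition, the map $\sigma_i:D_i\to R_i$ is a bijection and $\sigma|_{R_i}$ is its (two-sided) inverse. In particular $\sigma(R_i)=D_i$ and $\sigma(y)=\sigma_i^{-1}(y)$ for $y\in R_i$.

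For (2), fix $x\in R_i$. By (1), $\sigma(x)\in D_i$ and $\sigma_i(\sigma(x))=x$. Apply condition (c) of Definition \ref{isomorphic sfs} at the point $\sigma(x)\in D_i$:
\[
\varphi_i(x)\;=\;\varphi_i\bigl(\sigma_i(\sigma(x))\bigr)\;=\;\sigma_i'\bigl(\Phi_i(\sigma(x))\bigr).
\]
Because $\Phi_i$ maps $D_i$ into $D_i'$, we may now apply property (iii) to the s.f.s.\ $\{\sigma_i'\}$ and its coding map $\sigma'$, obtaining
\[
\sigma'(\varphi_i(x))\;=\;\sigma'\bigl(\sigma_i'(\Phi_i(\sigma(x)))\bigr)\;=\;\Phi_i(\sigma(x)),
\]
which is (2).

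For (3), observe first that since $\sigma$ is non-invertible globally, we read $m\circ\sigma$ piecewise: on each $R_i$, $\sigma|_{R_i}$ is a bijection (by (1)), so the statement of (3) is a claim about the Radon–Nikodym derivative of $m\circ\sigma|_{R_i}$. The defining identity
\[
\int_{D_i} g(x)\,\rho_m(x,\sigma_i)\,dm(x)\;=\;\int_{D_i} g(x)\,d(m\circ\sigma_i)(x)
\]
together with the substitution $y=\sigma_i(x)$, $x=\sigma(y)$ (valid by (1)) yields
\[
\int_{D_i} g(x)\,\rho_m(x,\sigma_i)\,dm(x)\;=\;\int_{R_i} g(\sigma(y))\,dm(y).
\]
Replacing $g$ by $g/\rho_m(\cdot,\sigma_i)$ inverts this to
\[
\int_{D_i} g(x)\,dm(x)\;=\;\int_{R_i} g(\sigma(y))\,\rho_m(\sigma(y),\sigma_i)^{-1}\,dm(y).
\]
Taking $g=\chi_{\sigma(E)}$ for $E\subset R_i$ and using the injectivity of $\sigma|_{R_i}$ to collapse $\{y\in R_i:\sigma(y)\in\sigma(E)\}=E$ gives
\[
m(\sigma(E))\;=\;\int_E \rho_m(\sigma(z),\sigma_i)^{-1}\,dm(z),
\]
which identifies the Radon–Nikodym derivative as asserted in (3).

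The only mild obstacle is bookkeeping: one must be careful that the ``$m\circ\sigma$'' on the left of (3) is interpreted locally on each $R_i$ (where $\sigma$ is injective) and that the pieces assemble consistently, which is guaranteed by the saturation condition $X=\bigsqcup_{i\in\Lambda}R_i$ (mod $0$). Once this is granted, all three parts are short consequences of the definitions.
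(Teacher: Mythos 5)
Your proof is correct and follows essentially the same route as the paper's: part (1) from the coding-map identity $\sigma\circ\sigma_i=\mathrm{id}_{D_i}$ together with injectivity of $\sigma_i$, part (2) by applying condition (c) of Definition \ref{isomorphic sfs} at the point $\sigma(x)\in D_i$ and then invoking (1), and part (3) by relating $m\circ\sigma|_{R_i}$ to $m\circ\sigma_i$ through the inverse relationship. The only difference is cosmetic: for (3) you justify the identity by an explicit change-of-variables integral argument (which is in fact more careful than the source), whereas the paper records the same fact as a formal chain of Radon--Nikodym quotients.
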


\begin{proof} (1) Indeed, if $x\in D_i$, then  $y =  \sigma_i(x) \in R_i$. Hence the relation $\sigma\circ\sigma_i(x) = x$ implies $\sigma(y) = x$, that is $\sigma_i^{-1}(y) = \sigma(y)$.

(2) It follows from Definition \ref{isomorphic sfs} (c) that for any $x\in R_i$
$$
\varphi_i^{-1}\circ\sigma'_i\circ\Phi_i\circ\sigma(x) = x
$$
or $\Phi_i\circ\sigma(x) = (\sigma'_i)^{-1}\circ \varphi_i (x)$. By (1), we have
$\Phi_i\circ \sigma(x) = \sigma'\circ\varphi_i(x)$.

(3) If $z = \sigma_i(y)$, then by (1) $\sigma(z) = y$ and
\begin{eqnarray*}
  \frac{dm(\sigma(z)) }{dm(z) } &=& \frac{dm(\sigma\circ \sigma_i(y))}{dm(\sigma_i(y))}  \\
  &=&  \frac{dm(\sigma(z))}{dm(\sigma_i \circ \sigma(z))} \\
  &=&  \rho_m(\sigma(z), \sigma_i)^{-1}.
\end{eqnarray*}
\end{proof}

\begin{thm}\label{isomorphic reprs}
Let $\{\sigma_i : i \in \Lambda\}$ and $\{\sigma'_i : i \in \Lambda\}$ be two isomorphic saturated s.f.s. defined on measure spaces $(X,m)$ and $(X',m')$,  respectively. Let also  $\sigma : X\to X$ and $\sigma' : X' \to X'$ be the corresponding coding maps. Suppose that $\{T_i = T_i (m) : i\in \Lambda\}$ and $\{T'_i = T'_i(m') : i\in \Lambda\}$ are operators acting respectively on $L^2(X, m)$ and  $L^2(X', m')$ according to the formulas:
\begin{equation}\label{T_i(m) formula}
     (T_i\psi)(x) = \chi_{R_i}(x)\rho_m(\sigma (x), \sigma_i)^{-1/2} \psi(\sigma (x)), \ i\in \Lambda, \  \psi\in L^2(X,m)
\end{equation}
\begin{equation}\label{T_i(m') formula}
     (T'_i\xi)(x) = \chi_{R'_i}(x) \rho_{m'}(\sigma'(x), \sigma'_i)^{-1/2} \xi(\sigma' (x)),\  i\in \Lambda,\  \xi\in L^2(X',m')
\end{equation}
If $\wt A$ is the matrix defined by (\ref{C-K defines A}), then the representations $\pi$ and $\pi'$ of $\mathcal O_{\wt A}$ determined by $\{T_i : i\in \Lambda\}$ and $\{T'_i : i\in \Lambda\}$, respectively, are unitarily equivalent.
\end{thm}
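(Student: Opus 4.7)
The plan is to build an explicit unitary $V \colon L^2(X,m)\to L^2(X',m')$ implemented by the global map $F\colon X\to X'$ coming from the isomorphism (Remark \ref{map F}), and then to verify the intertwining relation $V T_i = T'_i V$ for every $i\in \Lambda$. Taking adjoints will give $V T_i^* = (T'_i)^* V$, so $V$ conjugates the generators and hence the whole representations of $\mathcal O_{\wt A}$; note that by Remark \ref{map F}(3) both s.f.s.\ produce the same matrix $\wt A$, so the target algebra is the same on both sides.

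Set $h(x) := \dfrac{dm'\circ F}{dm}(x)$; by Remark \ref{map F}(1) this is strictly positive $m$-a.e.\ on $X$. The natural candidate for the unitary is
\begin{equation*}
(V\psi)(x') := h(F^{-1}(x'))^{-1/2}\, \psi(F^{-1}(x')), \qquad \psi \in L^2(X,m),\ x'\in X'.
\end{equation*}
A standard change-of-variable argument using $dm'(x') = h(F^{-1}(x'))\, d(m\circ F^{-1})(x')$ shows $\|V\psi\|_{L^2(X',m')} = \|\psi\|_{L^2(X,m)}$, so $V$ is unitary with $(V^{-1}\xi)(x) = h(x)^{1/2}\xi(F(x))$.

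The core step is to verify $V T_i = T'_i V$ for each $i\in\Lambda$. Using that $F$ maps $R_i$ bijectively onto $R'_i$ and the identity $F\circ\sigma = \sigma'\circ F$ on $R_i$ (which follows from Definition \ref{isomorphic sfs}(c) combined with Lemma \ref{properties of sfs}(2), since $\Phi_i$ and $\varphi_i$ are restrictions of $F$), a direct substitution into \eqref{T_i(m) formula} and \eqref{T_i(m') formula} reduces the intertwining to the pointwise identity
\begin{equation*}
h(x)^{-1/2}\, \rho_m(\sigma(x),\sigma_i)^{-1/2} \;=\; \rho_{m'}(\sigma'(F(x)),\sigma'_i)^{-1/2}\, h(\sigma(x))^{-1/2}, \qquad x\in R_i.
\end{equation*}
This is equivalent to $h(x)\,\rho_m(\sigma(x),\sigma_i) = h(\sigma(x))\,\rho_{m'}(\sigma'(F(x)),\sigma'_i)$, which I would obtain from the chain rule applied to $F\circ\sigma_i = \sigma'_i\circ F$ on $D_i$: differentiating both sides against $dm$ gives
\begin{equation*}
h(\sigma_i(y))\,\rho_m(y,\sigma_i) \;=\; h(y)\,\rho_{m'}(F(y),\sigma'_i), \qquad y \in D_i,
\end{equation*}
and then specializing $y=\sigma(x)$ for $x\in R_i$ together with $\sigma_i(\sigma(x))=x$ (Lemma \ref{properties of sfs}(1)) produces exactly the required identity.

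The main obstacle is the careful bookkeeping of Radon–Nikodym derivatives and of the domains $R_i$, $D_i$, $R'_i$, $D'_i$ under $F$, $\sigma$, $\sigma'$, $\sigma_i$, $\sigma'_i$; in particular one has to be sure that the chain-rule identity holds on the right set (namely on $D_i$, where both $F\circ\sigma_i$ and $\sigma'_i\circ F$ are defined and absolutely continuous with respect to $m$). Once that identity is secured the rest is essentially formal: $V$ intertwines the $T_i$, hence (by taking adjoints) the $T_i^*$, hence the entire representation of $\mathcal O_{\wt A}$, proving unitary equivalence of $\pi$ and $\pi'$. As a sanity check, the special case $X=X'$, $F=\mathrm{id}$ with $m'\sim m$ recovers Proposition \ref{eqv meas - eqv repr}, the Radon–Nikodym factor reducing to $(dm'/dm)^{-1/2}$.
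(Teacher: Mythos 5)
Your proposal is correct and follows essentially the same route as the paper: your unitary $V$ is exactly the adjoint (inverse) of the paper's operator $U_F$ built from $F$ and the square root of $dm'\circ F/dm$, and your chain-rule identity $h(\sigma_i(y))\rho_m(y,\sigma_i)=h(y)\rho_{m'}(F(y),\sigma'_i)$ on $D_i$ is precisely the content of the paper's direct expansion of $T_iU\xi$ and $UT'_i\xi$ using Lemma \ref{properties of sfs}. The only difference is organizational — you isolate the Radon--Nikodym bookkeeping into one displayed identity rather than expanding both sides of the intertwining relation term by term — so no further comparison is needed.
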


\begin{proof}
We will show that there exists an isometry operator $U : L^2(X', m') \to L^2(X, m)$ such that
\begin{equation}\label{U intertwines}
(UT'_i\xi)(x) = (T_iU\xi)(x),\ \ \ \xi\in  L^2(X', m'), \ \ i\in \Lambda.
\end{equation}
Define $U = U_F$ by setting
\begin{equation}\label{U definition}
     (U\xi)(x) = \sqrt{\frac{dm'\circ F }{dm}}(x) \xi(F(x)), \ \ \xi\in  L^2(X', m')
\end{equation}
where $F : X \to X'$ is a map defined in Remark \ref{map F}. Firstly, we check that $U$ is an isometry. In the respective $L^2$-norms, we have:
\begin{eqnarray*}
 \Vert U\xi \Vert^2 &=& \int_X \frac{dm'\circ F }{dm }(x) \overline{\xi}(F(x)) \xi(Fx)dm(x)\\
   &=& \int_X |\xi(F(x))|^2dm'(F(x)) \\
   &=& \Vert \xi\Vert^2.
\end{eqnarray*}
We used here the fact that $F$ is a measurable one-to-one map from $X$ onto $X'$. Secondly, it is easy to notice that $U$ is onto.

Next, we check that (\ref{U intertwines}) holds. In what follows,  we will use the relation $F\circ\sigma(x) = \Phi_i\circ\sigma(x) = \sigma'\circ\varphi_i(x)$ when $x \in R_i$, and the relation of Lemma \ref{properties of sfs} (2).
\begin{eqnarray*}
   (T_iU\xi)(x) &=& \chi_{R_i}(x) \rho_{m}(\sigma(x), \sigma_i)^{-1/2}(U\xi)(\sigma (x)) \\
   &=& \chi_{R_i}(x) \rho_{m}(\sigma(x), \sigma_i)^{-1/2} \left(\frac{dm'\circ F}{dm}(x)\right)^{1/2}\xi(F\circ \sigma (x)) \\
   &=& \chi_{R_i}(x) \rho_{m}(\sigma(x), \sigma_i)^{-1/2} \left(\frac{dm'(\sigma' \circ \varphi_i(x)) }{dm(\sigma (x)) }\right)^{1/2} \xi(\sigma' \circ \varphi_i(x))\\
   &=&  \chi_{R_i}(x) \left(\frac{dm(\sigma(x))}{dm(x)}\right)^{1/2}  \left(\frac{dm'(\sigma' \circ \varphi_i(x)) }{dm(\sigma (x)) }\right)^{1/2}  \xi(\sigma' \circ \varphi_i(x))\\
& = & \chi_{R_i}(x) \left( \frac{dm'(\sigma' \circ \varphi_i(x)) } {dm(x) } \right)^{1/2} \xi(\sigma' \circ \varphi_i(x))
\end{eqnarray*}

On the other hand, we have
\begin{eqnarray*}
  (UT'_i\xi)(x) &=& \left( \frac{dm'\circ F}{dm }(x)\right)^{1/2}(T_i(m')\xi)(F(x)) \\
   &=&  \left( \frac{dm'\circ F}{dm }(x)\right)^{1/2}\chi_{R'_i}(F(x)) \rho_{m'}(\sigma'\circ F(x), \sigma'_i)^{-1/2}\xi(\sigma'\circ F(x)).
\end{eqnarray*}
We observe that $\chi_{R'_i}(F(x)) = \chi_{R'_i}(\varphi_i(x)) = \chi_{R_i}(x)$ and if $x\in R_i$, then
\begin{eqnarray*}
  \rho_{m'}(\sigma' \circ F(x), \sigma'_i)^{-1/2} &=& \left(\frac{ dm'(\sigma' \circ F(x))}{ dm'(\sigma'_i\circ \sigma'\circ F(x)) } \right)^{1/2}\\
   &=& \left( \frac{ dm'(\sigma'\circ\varphi_i(x))}{dm'(\varphi_i(x)) } \right)^{1/2}
  \end{eqnarray*}
Substituting the letter into the expression for $UT_i(m')$, we find
\begin{eqnarray*}
 (UT_i(m')\xi)(x) &=& \chi_{R_i}(x) \left( \frac{dm'\circ F}{dm }(x)
\cdot \frac{ dm'(\sigma'\circ\varphi_i(x))}{dm'(\varphi_i(x)) } \right)^{1/2} \xi(\sigma'\circ\varphi_i(x))\\
   &=& \chi_{R_i}(x) \left( \frac{ dm'(\sigma'\circ\varphi_i(x))}{dm(x) } \right)^{1/2}
\xi(\sigma'\circ\varphi_i(x)).
\end{eqnarray*}

Comparing the found expressions for $UT'_i$ and $T_iU$, we see that (\ref{U intertwines}) holds, and therefore the representations $\pi$ and $\pi'$ of $\mathcal O_{\wt A}$ are unitarily equivalent.
\end{proof}

\section{Isomorphic s.f.s. on stationary Bratteli diagrams and equivalent representations of $\mathcal O_A$}\label{section isomorphic s.f.s.}

Let $B$ and $B'$ be two stationary simple 0-1 Bratteli diagrams with incidence matrices $F$ and $F'$  whose edge sets are $E$ and $E'$ respectively. Denote as usual $A = F^T,\ A' = (F')^T$. Let now $\{\sigma_e : e \in E\}$ and $\{\sigma'_{e'} : e' \in E'\}$ be two s.f.s. defined on $(X_B, m)$ and $(X_{B'}, m')$ (see Example \ref{example of s.f.s.}) where Borel probability measures are not specified at this stage. Recall also that we denoted by $\mathcal L(E)$ ($\mathcal L(E')$) the set of linked pairs of edges of $E$ ($E'$).

Our aim is to find out  under what conditions these s.f.s. are isomorphic. We observe that the same problem can be considered on a single stationary 0-1 Bratteli diagram $B$. In this case,  the s.f.s. $\{\sigma'_e : e \in E\}$ is obtained from  $\{\sigma_e : e \in E\}$ by rearranging edges from $E$.

\begin{definition}\label{admissible map}
Let the diagrams $B$ and $B'$ be as above. Suppose that $|E| =|E'|$. Then a one-to-one map $\alpha : E \to E'$ is called \textit{admissible} if for any edges $e, f\in E$
$$
r(e) = s(f) \ \ \Longleftrightarrow \ \ r((\alpha(e)) = s(\alpha(f)),
$$
that is $\alpha\times\alpha (\mathcal L(E)) = \mathcal L(E')$.
\end{definition}

It follows from this definition that $\alpha$ is admissible if and only if $\alpha^{-1} : E' \to E$ is admissible.

\begin{example}\label{example 3x3(1)}  We give an example of an admissible map defined on the stationary 0-1 Bratteli diagram $B$ where the matrix  $A$ is taken as in Example \ref{example 3x3}. Based on Remark \ref{correspondence E=A} we establish the one-to-one correspondence between edges of   $E$ and non-zero entries of $A$ as follows: $e_1 \leftrightarrow a_{1,1}, e_2 \leftrightarrow a_{1,2}, e_3 \leftrightarrow a_{2,2}, e_4 \leftrightarrow a_{2,3}, e_5 \leftrightarrow a_{3,1}, e_6 \leftrightarrow a_{3,3}$. If we define
$$
\alpha(e_1) = e_3, \ \alpha(e_3) = e_6, \ \alpha(e_6) = e_1, \ \alpha(e_2) = e_4, \
\alpha(e_4) = e_5, \ \alpha(e_5) = e_2,
$$
then we see that $\alpha$ is an admissible map. This fact is verified directly by considering the set of linked pairs for $B$.
\end{example}

\begin{lem}\label{defn alpha bar}
Suppose that $\alpha : E \to E'$ is an admissible map where $E$ and $E'$ are the edge sets of 0-1 stationary simple Bratteli diagrams $B$ an $B'$. Then $\alpha$ generates a homeomorphism $\overline \alpha : X_B \to X_{B'}$ defined as follows: for any $ x = (x_i) \in X_B$, set  $\overline \alpha(x) = y$ where $y = (y_0, \alpha(x_1), \alpha(x_2). ... )\in X_{B'}$ and $y_0$ is uniquely determined  by $\alpha(x_1)$ (for consistency, we will denote $y_0 = \alpha(x_0)$).
\end{lem}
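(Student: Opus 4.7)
The plan is to verify three things: (i) $\overline\alpha(x)$ actually lies in $X_{B'}$ for every $x\in X_B$, (ii) $\overline\alpha$ is a bijection, and (iii) $\overline\alpha$ is continuous. Since $X_B$ is compact and $X_{B'}$ is Hausdorff (both being Cantor spaces), a continuous bijection is automatically a homeomorphism, so these three steps will suffice.

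For well-definedness, I would first handle the initial edge. By Lemma \ref{0-1_matrix}, the normalization $|E'(v_0,v')|=1$ for every $v'\in V'$ holds in $B'$, so there is a unique edge from $v_0$ to $s(\alpha(x_1))$; this edge defines $y_0$ unambiguously and satisfies $r(y_0)=s(\alpha(x_1))$. For $i\geq 1$, the condition $x\in X_B$ forces $(x_i,x_{i+1})\in\mathcal L(E)$, whence by admissibility of $\alpha$ we obtain $(\alpha(x_i),\alpha(x_{i+1}))\in\mathcal L(E')$, i.e., $r(\alpha(x_i))=s(\alpha(x_{i+1}))$. Hence $\overline\alpha(x)\in X_{B'}$.

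For bijectivity, I would invoke the observation recorded just after Definition \ref{admissible map} that $\alpha^{-1}:E'\to E$ is itself admissible. The same construction therefore yields a map $\overline{\alpha^{-1}}:X_{B'}\to X_B$. Applying $\overline{\alpha^{-1}}\circ\overline\alpha$ to $x=(x_0,x_1,x_2,\ldots)$ recovers $\alpha^{-1}(\alpha(x_i))=x_i$ for each $i\geq 1$, and the $0$-th coordinate, which is the unique edge from $v_0$ to $s(x_1)$, must equal $x_0$ by the same uniqueness argument; the opposite composition is symmetric.

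For continuity, the cylinder sets $[(e_0,e_1,\ldots,e_n)]$ form a basis for the clopen topology on $X_B$, and the definition of $\overline\alpha$ gives directly
$$
\overline\alpha\bigl([(e_0,e_1,\ldots,e_n)]\bigr) = [(\alpha(e_0),\alpha(e_1),\ldots,\alpha(e_n))],
$$
where $\alpha(e_0)$ is the unique edge from $v_0$ to $s(\alpha(e_1))$. This is a cylinder set in $X_{B'}$. Together with the symmetric statement for $\overline{\alpha^{-1}}$, this shows $\overline\alpha$ maps basic clopen sets to basic clopen sets and hence is a homeomorphism.

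The only subtle point is the well-definedness of $y_0$, which depends crucially on the assumption that $B$ and $B'$ satisfy conclusion (1) of Lemma \ref{0-1_matrix}; without the uniqueness of edges from $v_0$, the $0$-th coordinate of $\overline\alpha(x)$ would not be recoverable from $\alpha(x_1)$ alone and the map would fail to be injective.
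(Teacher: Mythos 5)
Your proposal is correct and follows essentially the same route as the paper's own (much terser) proof: bijectivity via the admissibility of $\alpha^{-1}$, and continuity via the correspondence of cylinder sets. Your additional care with the well-definedness of $y_0$ under the 0-1 normalization of Lemma \ref{0-1_matrix} is a sound elaboration of a point the paper leaves implicit.
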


\begin{proof} It  easily follows from Definition \ref{admissible map} that $\overline \alpha$ is one-to-one and onto. The fact that $\overline\alpha$ is continuous is deduced from the observation that the preimage of any cylinder set is a cylinder set.
\end{proof}

Assuming that $B$ and $B'$ are 0-1 simple stationary Bratteli diagrams such that an admissible map $\alpha :E \to E'$ exists, we consider possible relations between $\overline\alpha$ and measures $m$ and $m'$ on the path spaces $X_B$ and $X_{B'}$. According to subsection \ref{measures on BD}, we will discuss three cases: the $\mathcal R$-invariant measure $\mu$, stationary Markov measures $\nu$, and general Markov measures $m$.
\medskip

(I) Let $\mu$ and $\mu'$ be the probability measures on $X_B$ and $X_{B'}$ invariant with respect to the tail equivalence relations $\mathcal R$ and $\mathcal R'$. Any such a measure is completely determined by its values on cylinder sets (see (\ref{inv_mesaure}). Let $(\mathbf{x}, \lambda)$ and $(\mathbf{x}',\lambda')$ be Perron-Frobenius data for matrices $A$ and $A'$, respectively.

Since $\overline \alpha : X_{B} \to X_{B'}$, the measure $\mu'$ defined by  $(\mathbf{x}',\lambda')$ is pulled back to $X_B$ and determines a new measure $\mu'\circ\overline\alpha$ on $X_B$.

\begin{lem}\label{alpha bar for mu}
If $\lambda \neq \lambda'$, then the measures $\mu'\circ\overline\alpha$ and $\mu$ are singular.

If $\lambda = \lambda'$,
then $\mu'\circ\overline\alpha = \mu$.
\end{lem}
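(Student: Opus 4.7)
The plan is to reduce both assertions to explicit computations on cylinder sets, where formula~(\ref{inv_mesaure}) gives closed-form expressions for $\mu$ and $\mu'$, and then invoke uniqueness of the Perron--Frobenius eigenvector for one case and a differentiation argument for the other.

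First, I would show that the admissible map $\alpha$ induces a bijection $\tilde\alpha\colon V\to V'$ on vertices such that $s(\alpha(e))=\tilde\alpha(s(e))$ and $r(\alpha(e))=\tilde\alpha(r(e))$ for every $e\in E$. To see well-definedness, suppose $r(e)=r(g)=v$ and pick any $f$ with $s(f)=v$ (which exists since $B$ is stationary simple). Then $(e,f),(g,f)\in\mathcal L(E)$, so by admissibility $r(\alpha(e))=s(\alpha(f))=r(\alpha(g))$; the symmetric argument on sources yields the same map. Applying the argument to the (also admissible) $\alpha^{-1}$ shows $\tilde\alpha$ is a bijection; comparing range/source data of individual edges then gives $a_{i,j}=a'_{\tilde\alpha(i),\tilde\alpha(j)}$, so $A'=P A P^{T}$ for the permutation $P$ associated to $\tilde\alpha$.

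Next, for any cylinder $X_v^{(n)}(\overline e)$ with $r(\overline e)=v$, the image $\overline\alpha(X_v^{(n)}(\overline e))$ is exactly the cylinder $X_{\tilde\alpha(v)}^{(n)}(\alpha(\overline e))$ in $X_{B'}$. Combining this with (\ref{inv_mesaure}),
$$
\mu(X_v^{(n)}(\overline e))=\frac{\mathbf x_v}{\lambda^{\,n-1}},\qquad
(\mu'\circ\overline\alpha)(X_v^{(n)}(\overline e))=\frac{\mathbf x'_{\tilde\alpha(v)}}{(\lambda')^{n-1}}.
$$
In the case $\lambda=\lambda'$, the permutation equivalence of $A$ and $A'$ combined with uniqueness of the normalized Perron--Frobenius eigenvector forces $\mathbf x'_{\tilde\alpha(v)}=\mathbf x_v$ for every $v$; hence the two measures agree on every cylinder set, and since these generate the Borel $\sigma$-algebra of $X_B$, the equality $\mu'\circ\overline\alpha=\mu$ follows. (Alternatively, one may observe that $\mu'\circ\overline\alpha$ is $\mathcal R$-invariant, because $\overline\alpha$ conjugates $\mathcal R$ to $\mathcal R'$, and then appeal to uniqueness of $\mathcal R$-invariant probability measures on a simple stationary diagram.)

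For the case $\lambda\neq\lambda'$, I would compute, along any path $\{x\}=\bigcap_n[\overline e(n)]$,
$$
\frac{(\mu'\circ\overline\alpha)([\overline e(n)])}{\mu([\overline e(n)])}=\frac{\mathbf x'_{\tilde\alpha(r(\overline e(n)))}}{\mathbf x_{r(\overline e(n))}}\left(\frac{\lambda}{\lambda'}\right)^{n-1}.
$$
Because $\mathbf x$ and $\mathbf x'$ are strictly positive and have finitely many entries, the prefactor is bounded between two positive constants, so the ratio tends to $0$ or $+\infty$ according to whether $\lambda<\lambda'$ or $\lambda>\lambda'$, at every $x\in X_B$. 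Applying de Possel's theorem to the refining sequence of cylinder partitions (as used in the computation of $\rho_m$ following Example~\ref{example of s.f.s.}), this limit is the Radon--Nikodym derivative of the absolutely continuous part of $\mu'\circ\overline\alpha$ with respect to $\mu$; since it is nowhere a positive finite number, the two measures must be mutually singular. The main technical obstacle is the vertex-lifting argument of the first paragraph, which supplies the indexing needed to make the cylinder-set computation transparent; once that is in place both cases reduce to the two standard tools (uniqueness of the Perron--Frobenius eigenvector, and de Possel's differentiation theorem on refining partitions).
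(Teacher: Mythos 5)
Your proposal is correct and its skeleton coincides with the paper's: compute the two measures on cylinder sets via (\ref{inv_mesaure}), use de Possel's differentiation theorem on the refining cylinder partitions to get singularity when $\lambda\neq\lambda'$, and use a uniqueness argument to upgrade equivalence to equality when $\lambda=\lambda'$. The genuine difference is in the equality case: the paper observes that $\overline\alpha$ conjugates $\mathcal R$ to $\mathcal R'$, so $\mu'\circ\overline\alpha$ is an ergodic $\mathcal R$-invariant probability measure and must equal $\mu$ because $\mathcal M_1(\mathcal R)$ is a singleton; you instead prove a structural lemma that $\alpha$ induces a vertex bijection $\tilde\alpha$ with $A'=PAP^{T}$, and then invoke uniqueness of the normalized Perron--Frobenius eigenvector to get $\mathbf x'_{\tilde\alpha(v)}=\mathbf x_v$ directly (you also record the paper's route in parentheses). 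Your vertex-lifting argument is sound and in fact fills a gap the paper glosses over, since the paper writes $\mathbf x_{r(\alpha(e_n))}$ without explaining why $\alpha$ acts coherently on vertices. Two remarks. First, your own first paragraph shows that $A$ and $A'$ are permutation-similar whenever an admissible map exists, hence $\lambda=\lambda'$ \emph{always}; so the singularity half of the lemma is vacuous under your analysis --- neither you nor the paper points this out, but it is a direct corollary of your stronger structural claim. Second, in the case $\lambda>\lambda'$ the cylinder ratio tends to $+\infty$ at every point, and the clean way to conclude is either to swap the roles of the two measures (so the limit is $0$ $\mu'\circ\overline\alpha$-a.e.) or to note that the set where the derivative is infinite carries the whole singular part and is $\mu$-null; your phrase ``nowhere a positive finite number'' elides this, but the paper's own proof is no more careful, and in any event the case cannot arise.
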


\begin{proof} We recall that $\overline\alpha$ maps cylinder sets of $X_B$ onto cylinder sets of $X_{B'}$. Their measures are computed according to (\ref{inv_mesaure}): if $\overline e = (e_0, e_1, ... , e_n)$, then
$$
\mu([\overline e]) = \frac{\mathbf x_{r(e_n)}}{\lambda^{n-1}},\ \ \ \mu'(\overline\alpha([\overline e])) = \frac{\mathbf x_{r(\alpha_n(e_n))}}{(\lambda')^{n-1}}.
$$
Applying de Possel's theorem, we see that for $\{x\} = \bigcap_n[\overline e(n)]$
\begin{equation}\label{RN der alpha-mu}
\left(0 <  \frac{d\mu'\circ \overline\alpha }{d\mu }(x) = \lim_{n\to\infty} \frac{\mu'(\overline\alpha([\overline e])) } {\mu([\overline e])  }<\infty\right) \ \ \Longleftrightarrow \ \ \lambda = \lambda'.
\end{equation}

In fact the above Radon-Nikodym derivative must be equal to one. This observation can be deduced from the following argument. Note that $(x,y) \in \mathcal R$ if and only if   $(\overline\alpha (x), \overline\alpha(y)) \in \mathcal R'$. If $[\mathcal R]$ is the full group of transformations generated by $\mathcal R$, then
$$
\overline\alpha^{-1}[\mathcal R']\overline\alpha = [\mathcal R].
$$
This means that $\mu'\circ \overline\alpha$ is an $\mathcal R$-invariant ergodic measure. Since the set $\mathcal M_1(\mathcal R)$ is a singleton, we conclude that $\mu'\circ \overline\alpha = \mu$.

\end{proof}

In particular, if $B = B'$ and $\alpha$ is an admissible map from $E$ onto $E$, then the homeomorphism $\overline\alpha : X_B \to X_B$ preserves the measure $\mu$, and $\overline\alpha$ belongs to the normalizer $N[\mathcal R]$.
\medskip

(II) Consider now the case of stationary Markov measures $\nu = \nu(P)$ and $\nu' = \nu'(P')$ defined on $X_B$ and $X_{B'}$ by Markov matrices $P$ and $P'$  according to (\ref{nu(P)}).

\begin{lem}\label{alpha on stat meas}
Let $\overline\alpha : X_B \to X_{B'}$ be defined as above. Then  $\nu' \circ\overline\alpha$ is equivalent to $\nu$ if and only if for any $e\in E$
\begin{equation}\label{alpha invariance}
     p'_{s(\alpha (e)), \alpha(e)} = p_{s(e), e}.
\end{equation}
\end{lem}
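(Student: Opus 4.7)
The plan is to apply de Possel's differentiation theorem to ratios of cylinder measures, and then extract the pointwise condition via recurrence of the underlying Markov chain on edges.

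First I compute the cylinder ratios explicitly. For a cylinder $[\overline e]=[(e_0,e_1,\ldots,e_n)]$ in $X_B$, the homeomorphism $\overline\alpha$ maps it onto the cylinder $[(\alpha(e_0),\alpha(e_1),\ldots,\alpha(e_n))]\subset X_{B'}$. Applying formula (\ref{nu(P)}) on both $X_B$ and $X_{B'}$ gives
\begin{equation*}
\frac{(\nu'\circ\overline\alpha)([\overline e])}{\nu([\overline e])}
\;=\;
\frac{p^{\prime(0)}_{v_0,\alpha(e_0)}}{p^{(0)}_{v_0,e_0}}\,\prod_{i=1}^{n} r_{e_i},
\qquad
r_e:=\frac{p'_{s(\alpha(e)),\,\alpha(e)}}{p_{s(e),\,e}}.
\end{equation*}
By de Possel's theorem, invoked exactly as in Example \ref{example of s.f.s.} and in the proof of Lemma \ref{alpha bar for mu}, for $\{x\}=\bigcap_n[\overline e(n)]$ the Radon--Nikodym derivative $d(\nu'\circ\overline\alpha)/d\nu$ at $x$ equals the limit (when it exists, is finite, and is positive) of these ratios as $n\to\infty$.

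For sufficiency, assume (\ref{alpha invariance}). Then each factor $r_{e_i}$ is identically $1$, so the ratio reduces to the constant $p^{\prime(0)}_{v_0,\alpha(e_0)}/p^{(0)}_{v_0,e_0}$, which is finite and strictly positive on the support of $\nu$ since (\ref{defn of P_n})(a) forces all initial probabilities $p^{(0)}_{v_0,e_0}$ (and symmetrically $p^{\prime(0)}_{v_0,e_0'}$) to be positive. Hence $d(\nu'\circ\overline\alpha)/d\nu$ exists and is positive $\nu$-a.e., and by the symmetric argument so does $d\nu/d(\nu'\circ\overline\alpha)$; therefore $\nu'\circ\overline\alpha\sim\nu$.

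For necessity, suppose $\nu'\circ\overline\alpha\sim\nu$. Then for $\nu$-a.e.\ $x=(e_0,e_1,\ldots)$ the infinite product $\prod_{i=1}^\infty r_{e_i}$ converges to a finite positive value, which in particular forces $r_{e_i}\to 1$ as $i\to\infty$. Since $E$ is finite, $r\colon E\to\mathbb R_{>0}$ takes only finitely many values. By primitivity of $A$ (equivalently, simplicity of $B$) combined with the positivity of all entries $p_{s(e),e}$ given by (\ref{defn of P_n})(a), the vertex-indexed Markov chain underlying $\nu$ is irreducible, so each $v\in V$ is visited infinitely often $\nu$-a.s.; a Borel--Cantelli argument at those successive visits, using that the conditional probability of outgoing edge $e$ is the fixed positive value $p_{s(e),e}$, then shows that every $e\in E$ is traversed infinitely often in $\nu$-a.e.\ path. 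Consequently $r_{e_i}\to 1$ almost surely can hold only if $r_e=1$ for every $e\in E$, which is exactly condition (\ref{alpha invariance}).

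The main subtlety lies in the necessity direction: one must rule out fortuitous cancellations that could let $\prod r_{e_i}$ converge without each $r_e$ being $1$. This is precisely what the joint input of the finite range of $r$ and the recurrence of every edge under the ergodic Markov chain achieves, since any $e$ with $r_e\neq 1$ would be visited infinitely often and so would prevent $r_{e_i}\to 1$.
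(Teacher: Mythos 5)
Your proposal is correct and follows essentially the same route as the paper's own proof: compute the cylinder-set ratios via (\ref{nu(P)}), pass to the Radon--Nikodym derivative by de Possel's theorem, and in the necessity direction combine the fact that the ratios $r_e$ take only finitely many values (so a convergent positive product forces $r_{e_i}=1$ eventually) with the fact that, by simplicity, $\nu$-a.e.\ path traverses every edge infinitely often. The only difference is cosmetic: you spell out the recurrence of every edge via a Borel--Cantelli argument where the paper simply asserts it from simplicity of $B$.
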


\begin{proof} Let $\{x\}$ be a point that is uniquely determined by a nested sequence of cylinder sets $([\overline e(n)])$. Then $\overline \alpha ([\overline e(n)]) = [\alpha(e_0), \alpha(e_1), ... , \alpha(e_n)]$ and  we find using relation (\ref{nu(P)}) that for any $n$
\begin{equation}\label{comp RN for nu}
     \frac{\nu'(\overline \alpha ([\overline e(n)])} { \nu'[\overline e(n)] } =
\frac{(p')^{(0)}_{v_0, \alpha(e_0)} }{p^{(0)}_{v_0, e_0} } \prod_{i =1}^n\frac {p'_{s(\alpha(e_i)), \alpha(e_i)}  }{ p_{s(e_i), e_i}}
\end{equation}
If the condition of the lemma holds, then the Radon-Nikodym derivative  $\dfrac{d\nu' \circ\overline\alpha}{d\nu}(x) $ is finite and positive.

Conversely, suppose that $ \nu' \circ\overline\alpha \sim \nu$. Then
we see from (\ref{comp RN for nu}) that the product
 $$
\prod_{i =1}^\infty\frac {p'_{s(\alpha(e_i)), \alpha(e_i)}  }{ p_{s(e_i), e_i}}
$$
must converge. Indeed, this follows from the following observation: let $T$ be a finite set containing 1, then $0 <\prod_i t_i < \infty, \ t_i \in T$, if and only if $t_i =1$ for all sufficiently large $i$. Since $B$ is simple, then for any $e\in E$ and $\nu$-almost all $x =(x_i) \in X_B$ the $|\{ i : x_i =e\}| =\infty$. This proves the result.

\end{proof}

\begin{example}\label{example of P}
In order to illustrate the results proved in Lemma \ref{alpha on stat meas}, we consider the same Bratteli  diagram $B$ and matrix $A$ as in Examples \ref{example 3x3} and \ref{example 3x3(1)}. For $\alpha : E \to E$ defined in Example  \ref{example 3x3(1)}, define the matrix $P$ (that determines a stationary Markov measure on $X_B$) as follows
$$
P = \left(
  \begin{array}{cccccc}
    p & q & 0 & 0 & 0 & 0 \\
    0 & 0 & p & q & 0 & 0 \\
    0 & 0 & 0& 0 & q & p \\
  \end{array}
\right)
$$
where $p,q \in (0,1), p+q =1$. It is straightforward to verify that for chosen $\alpha$ and $P$ the condition of $\alpha$-invariance (\ref{alpha invariance}) holds.
\end{example}

(III) In the case of arbitrary Markov measures $m$ and $m'$ defined on $X_B$ and $X_{B'}$  by  sequences of stochastic matrices $(P_n)$ and $P'_n$, we can proceed as in case (II). By the same method as in Lemma \ref{alpha on stat meas}, we can show that the following statement holds (an easy proof is omitted).

\begin{lem} \label{alpha Markov meas}
Let $B$ and $B'$ be given as above and let $m = m(P_n)$ and $m' m'(P'_n)$ be Markov measures defined on $X_B$ and $X_{B'}$. Then for $\overline\alpha : X_B \to X_{B'}$, the measure $m' \circ\overline\alpha$ is equivalent to $m$ if and only if for any $x =(x_i) \in X_B$
\begin{equation}\label{alpha inv for m}
    0 < \prod_{i =1}^\infty\frac {(p')^{(i)}_{s(\alpha(x_i)), \alpha(x_i)}  }{ p^{(i)}_{s(x_i), x_i}} <\infty.
\end{equation}

\end{lem}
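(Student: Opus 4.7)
The plan is to mirror the proof of Lemma \ref{alpha on stat meas} (the stationary Markov case), the only substantive change being that the transition probabilities now carry a level index $i$, so one must track every factor in a telescoping product of ratios. The three ingredients are: the defining formula (\ref{m([e])}) for a Markov measure on a cylinder set, the cylinder-to-cylinder description of $\overline\alpha$ given by Lemma \ref{defn alpha bar}, and de Possel's differentiation theorem applied to the refining sequence of clopen partitions $\xi_n$ of $X_B$ by cylinders of length $n$.

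First, I would fix $x = (x_i) \in X_B$ and let $[\overline e(n)] := [(x_0, x_1, \ldots, x_n)]$ be the nested cylinders with $\{x\} = \bigcap_n [\overline e(n)]$. Admissibility of $\alpha$ together with Lemma \ref{defn alpha bar} yields
$$
\overline\alpha([\overline e(n)]) = [(\alpha(x_0), \alpha(x_1), \ldots, \alpha(x_n))],
$$
again a cylinder in $X_{B'}$. Substituting (\ref{m([e])}) on both sides and canceling gives the telescoping identity
$$
\frac{m'(\overline\alpha([\overline e(n)]))}{m([\overline e(n)])}
= \frac{(p')^{(0)}_{v_0, \alpha(x_0)}}{p^{(0)}_{v_0, x_0}} \prod_{i=1}^{n} \frac{(p')^{(i)}_{s(\alpha(x_i)), \alpha(x_i)}}{p^{(i)}_{s(x_i), x_i}}.
$$

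Second, I would invoke de Possel's theorem. The measure $m' \circ \overline\alpha$ is equivalent to $m$ precisely when the above cylinder ratio converges in $(0,\infty)$ for $m$-a.e. $x$, in which case the limit equals $\frac{d(m' \circ \overline\alpha)}{dm}(x)$. The leading factor depending only on $x_0$ is a fixed positive finite constant under the standing support hypothesis on the initial distribution $p^{(0)}$ that is used for all Markov measures in the paper. Hence convergence in $(0,\infty)$ of the full cylinder ratio is equivalent to convergence in $(0,\infty)$ of the tail infinite product, which is precisely the condition in the statement (understood in the $\bmod\,0$ sense of Section 3).

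The only point requiring care, as opposed to the stationary case, is that the pigeonhole argument used in Lemma \ref{alpha on stat meas} — which exploited that the factors range over a finite set and so must eventually equal $1$ — is no longer available in the non-stationary setting. This is not a genuine obstacle, however: both implications are immediate consequences of de Possel's theorem itself. Equivalence supplies a positive finite Radon-Nikodym derivative, which is this limit, and conversely a pointwise a.e. positive finite limit of the cylinder ratios is promoted by de Possel to an honest density, producing mutual absolute continuity on $\mathcal B(X_B)$. No extra ergodic or entropy input is needed.
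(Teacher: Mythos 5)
Your proposal is correct and follows exactly the route the paper intends: the paper omits the proof of this lemma, stating only that it is ``by the same method as in Lemma \ref{alpha on stat meas},'' namely computing the telescoping ratio of Markov measures on nested cylinders mapped by $\overline\alpha$ and passing to the limit via de Possel's theorem, which is precisely what you do. Your observation that the finite-range pigeonhole step from the stationary case is not needed here is accurate, and your treatment of the sufficiency direction is at the same level of rigor as the paper's own argument in the stationary case.
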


\begin{example}
We use the 0-1 stationary Bratteli diagram and the admissible map $\alpha$ from Example \ref{example 3x3(1)}. To define a Markov measure $m = m(P_n)$, we can set for $n\in \mathbb N$
$$
P_n = \left(
  \begin{array}{cccccc}
    p & q & 0 & 0 & 0 & 0 \\
    0 & 0 & p + \varepsilon_n & q -  \varepsilon_n & 0 & 0 \\
    0 & 0 & 0& 0 & q +\delta_n & p -\delta_n \\
  \end{array}
\right)
$$
where the sequences $\varepsilon_n$ and $\delta_n$ are chosen so that the product (\ref{alpha inv for m}) converges.
\end{example}

\begin{thm}\label{isom sfs for Br dgrms}
Let $B$ and $B'$ be two 0-1 stationary simple Bratteli diagrams and let $\{\sigma_e : e \in E\}$ and $\{\sigma'_{e'} : e' \in E'\}$ be the corresponding s.f.s. defined on $(X_B, m)$ and $(X_{B'}, m')$. Suppose that $\alpha : E \to E'$ is an admissible map and $\overline \alpha : X_B \to X_{B'}$ is the one-to-one transformation generated by $\alpha$. Then $\overline\alpha$ implements the isomorphism of $\{\sigma_e : e \in E\}$ and $\{\sigma'_{\alpha(e)} : e \in E\}$ if and only if at least one of the following conditions hold:
\begin{itemize}
\item $m = \mu$, $m' = \mu'$ where $\mu$ and $\mu'$ are the measures invariant with respect to $\mathcal R$ and $\mathcal R'$ respectively satisfying the invariance relation $\mu'\circ\overline\alpha = \mu$;

\item $m = \nu$, $m' = \nu'$ where $\nu = \nu(P)$ and $\nu' = \nu(P')$ are stationary Markov measures satisfying condition (\ref{alpha invariance});

\item $m = m(P_n)$ and $m' = m'(P'_n)$ are Markov measures satisfying condition (\ref{alpha inv for m}).
\end{itemize}
\end{thm}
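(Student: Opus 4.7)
The plan is to take $F = \overline{\alpha}$ and $\varphi_e := \overline{\alpha}|_{R_e}$ for each $e \in E$ in Definition \ref{isomorphic sfs}, and then verify conditions (a), (b), (c) separately. Conditions (a) and (c) will turn out to be consequences purely of admissibility (no measure involved), while condition (b) is precisely the statement that $m' \circ \overline{\alpha}$ is equivalent to $m$. The three bullets of the theorem then correspond exactly to the three classes of measures for which Lemmas \ref{alpha bar for mu}, \ref{alpha on stat meas}, and \ref{alpha Markov meas} characterize this equivalence.

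First I would check (a). By definition of $R_e$ and of $\overline{\alpha}$ (Lemma \ref{defn alpha bar}), for any $y = (y_i) \in R_e$ we have $y_1 = e$ and $\overline{\alpha}(y) = (y_0'', \alpha(e), \alpha(y_2), \ldots)$, where $y_0''$ is the unique edge from $v_0$ to $s(\alpha(e))$ in $B'$ (here we use that $B'$ is a 0-1 diagram). Thus $\overline{\alpha}(R_e) = R'_{\alpha(e)}$, and the map is bijective since $\overline{\alpha}$ is a homeomorphism of path spaces. Since $\{R_e\}$ partitions $X_B$, the collection $\{\varphi_e\}$ assembles into the single map $\overline{\alpha}$, which is condition (a) together with the remark about $F$.

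Next I would verify (c), which is the main combinatorial check. Fix $e \in E$ and $y \in D_e$, so $s(y_1) = r(e)$. Using (\ref{defn of sigma_e}),
\[
\overline{\alpha}(\sigma_e(y)) = \overline{\alpha}(y_0', e, y_1, y_2, \ldots) = (z_0', \alpha(e), \alpha(y_1), \alpha(y_2), \ldots),
\]
where $z_0'$ is the unique edge in $B'$ with $r(z_0') = s(\alpha(e))$. On the other hand, $\Phi_e(y) = \overline{\alpha}(y) = (y_0'', \alpha(y_1), \alpha(y_2), \ldots)$ where $r(y_0'') = s(\alpha(y_1))$. By admissibility of $\alpha$, the relation $s(y_1) = r(e)$ forces $s(\alpha(y_1)) = r(\alpha(e))$, so $\Phi_e(y) \in D'_{\alpha(e)}$ and applying (\ref{defn of sigma_e}) to $\sigma'_{\alpha(e)}$ produces the same sequence $(z_0', \alpha(e), \alpha(y_1), \alpha(y_2), \ldots)$, since $z_0'$ is uniquely determined in a 0-1 diagram. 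This gives $\overline{\alpha} \circ \sigma_e = \sigma'_{\alpha(e)} \circ \Phi_e$ on $D_e$, which is (c). This is the step I expect to be the main (though still routine) obstacle, as it requires carefully tracking the edges from $v_0$ and invoking admissibility at the correct place.

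Finally, (b) asks that $m' \circ \overline{\alpha}$ be equivalent to $m$ on each $R_e$, equivalently (since the $R_e$ partition $X_B$) globally on $X_B$. In each of the three bulleted cases the corresponding Lemma \ref{alpha bar for mu}, \ref{alpha on stat meas}, or \ref{alpha Markov meas} shows that the stated compatibility condition ($\mu' \circ \overline{\alpha} = \mu$, the identity (\ref{alpha invariance}), or the product condition (\ref{alpha inv for m}), respectively) is necessary and sufficient for $m' \circ \overline{\alpha} \sim m$. Combining this with the automatic validity of (a) and (c) established above yields the two implications of the theorem: if one of the three compatibility conditions holds, then (a)--(c) are all satisfied and $\overline{\alpha}$ implements the isomorphism; conversely, if $\overline{\alpha}$ implements the isomorphism, then in particular (b) holds, which via the appropriate lemma forces the corresponding condition on the measure pair $(m, m')$.
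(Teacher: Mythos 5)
Your proposal is correct and follows essentially the same route as the paper: verify conditions (a) and (c) of Definition \ref{isomorphic sfs} for $\varphi_e = \overline\alpha|_{R_e}$ directly from admissibility (including the intermediate fact $\overline\alpha(D_e)=D'_{\alpha(e)}$ and the careful tracking of the initial edge determined by $s(\alpha(e))$), and then reduce condition (b) to the measure equivalence $m'\circ\overline\alpha\sim m$, which Lemmas \ref{alpha bar for mu}, \ref{alpha on stat meas}, and \ref{alpha Markov eas} characterize in the three respective cases. No gaps.
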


\begin{proof}
The proof is based on Lemmas \ref{alpha bar for mu}, \ref{alpha on stat meas}, and \ref{alpha Markov meas} and the following  observations. According to Definition \ref{isomorphic sfs}, we need to check several properties that the s.f.s. $\{\sigma_e : e \in E\}$ and $\{\sigma'_{\alpha(e)} : e \in E\}$ must satisfy.

Firstly, we note that for any $e\in E$, one has $\overline \alpha : R_e \to R'_{\alpha(e)}$. Indeed,  $x = (x_i) \in R_e \ \Longleftrightarrow \ x_1 = e$, hence $\alpha (x_1) = \alpha(e)$, and therefore $\overline\alpha(x) = (\alpha(x_i)) \in R'_{\alpha(e)}$. Since $\alpha : E \to E'$ is one-to-one and onto, the map   $\overline \alpha : R_e \to R'_{\alpha(e)}$ is also one-to-one and onto.

Secondly, it follows from the proved fact that  $\overline \alpha(D_e) = D'_{\alpha(e)}$. Indeed,
$$
\overline \alpha(D_e) = \bigcup_{f : s(f) = r(e)} \overline \alpha(R_f) = \bigcup_{f : s(f) = r(e)}  R'_{\alpha(f)} = \bigcup_{f' : s(f') = r(\alpha(e))}  R'_{f'} = D_{\alpha(e)}.
$$

Thirdly, we claim that for any $e\in E$ and $x\in D_e$
$$
\overline\alpha\circ \sigma_e(x) = \sigma'_{\alpha(e)} \circ \overline\alpha (x).
$$
Compute
$$
\overline\alpha\circ \sigma_e(x) = \overline\alpha (x'_0, e, x_1, ... ) = (\alpha(x_0), \alpha(e), \alpha(x_1), ...)
$$
and
$$
\sigma'_{\alpha(e)} \circ \overline\alpha (x) = \sigma'_{\alpha(e)} (\alpha(x_0), \alpha(x_1), ...) = (y'_0, \alpha(e), \alpha(x_1), ...).
$$
It remains to notice that $\alpha(x_0) = y'_0$ since these edges are determined by $s(\alpha(e))$.

In general, the measure $m'\circ \overline\alpha$ is not equivalent to $m$. The proved lemmas give necessary and sufficient conditions for such an equivalence. They are the same as those given in the theorem. The proof is complete.
\end{proof}

Theorems \ref{isomorphic reprs} and \ref{isom sfs for Br dgrms} allow us to deduce  the following result.

\begin{thm}\label{equiv repr on BD}
Let $B$, $B'$, $\{\sigma_e : e \in E\}$ and $\{\sigma'_{e'} : e' \in E'\}$ be as in
Theorem \ref{isom sfs for Br dgrms}. Suppose that for an admissible map
$\alpha : E \to E'$ the transformation
$\overline \alpha : (X_B, m) \to (X_{B'}, m')$ implements an isomorphism of
s.f.s. $\{\sigma_e : e \in E\}$ and $\{\sigma'_{\alpha(e)} : e \in E\}$ (this,
in particular, means that the measures $m$ and $m'$ satisfy the conditions
of Theorem \ref{isom sfs for Br dgrms}; and the both s.f.s. have the same matrix $\wt A$ defined by (\ref{C-K defines A})). Let $\pi$ and $\pi'_{\alpha}$ be
representations of $\mathcal O_{\wt A}$ generated by operators
$\{T_e = T_e(m) : e\in E\}$ and $\{T'_{\alpha(e)} = T'_{\alpha(e)} (m') : e\in E\}$
according to the following formulas:
\begin{equation}\label{T_e formula}
  (T_e\psi)(x) = \chi_{R_e}(x)\rho_m(\sigma (x), \sigma_e)^{-1/2} \psi(\sigma (x)),
\ \psi\in L^2(X,m),
\end{equation}
\begin{equation}\label{T'_e formula}
  (T'_{\alpha(e)} \xi)(x) = \chi_{R'_{\alpha(e)}}(x) \rho_{m'}(\sigma'(x),
\sigma'_{\alpha(e)})^{-1/2} \xi(\sigma' (x)),\   \xi\in L^2(X',m')
\end{equation}
where $\sigma$ and $\sigma'$ are coding maps acting on $X_B$ and $X_{B'}$.
Then the representation $\pi$ and $\pi_\alpha$ of $\mathcal O_{\wt A}$ are unitarily  equivalent.
\end{thm}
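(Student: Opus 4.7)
The plan is to reduce this statement to a direct concatenation of the two preceding results, namely Theorem \ref{isom sfs for Br dgrms} and Theorem \ref{isomorphic reprs}. The key conceptual point is that the hypothesis already reindexes the target system: one views $\{\sigma'_{\alpha(e)} : e \in E\}$ as a s.f.s.\ with common index set $\Lambda = E$, matching the indexing of $\{\sigma_e : e\in E\}$. The admissible map $\alpha$ plays no further role beyond producing the intertwining homeomorphism $\overline\alpha$, which is the candidate measure-space isomorphism of Definition \ref{isomorphic sfs}.

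First, I would verify that both s.f.s., after this reindexing, satisfy the (C-K) condition with the \emph{same} matrix $\wt A$. This is immediate from admissibility: Definition \ref{admissible map} gives $(\alpha\times\alpha)(\mathcal L(E)) = \mathcal L(E')$, and in Example \ref{example of s.f.s.}\ the (C-K) matrix of each system is precisely the indicator of linked pairs, i.e.\ $\wt a_{e,f}=1 \iff (e,f)\in\mathcal L(E)$. Hence both $\{T_e\}$ and $\{T'_{\alpha(e)}\}$ satisfy the Cuntz--Krieger relations (\ref{CK relations}) for the same $\wt A$, so that $\pi$ and $\pi'_\alpha$ are genuinely representations of the \emph{same} algebra $\mathcal O_{\wt A}$.

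Next, I would appeal to Theorem \ref{isom sfs for Br dgrms}: under any one of the three measure-theoretic scenarios listed in its hypothesis (which is exactly the standing hypothesis here), the map $\overline\alpha : X_B \to X_{B'}$ realizes an isomorphism of the s.f.s.\ $\{\sigma_e : e\in E\}$ and $\{\sigma'_{\alpha(e)} : e\in E\}$ in the sense of Definition \ref{isomorphic sfs}. Concretely, the required family $\{\varphi_e\}$ of Definition \ref{isomorphic sfs} is given by the restrictions $\varphi_e := \overline\alpha|_{R_e} : R_e \to R'_{\alpha(e)}$; equivalence of measures on each $R_e$ comes from the cited lemmas (\ref{alpha bar for mu}, \ref{alpha on stat meas}, \ref{alpha Markov meas}), and the intertwining identity $\overline\alpha\circ\sigma_e = \sigma'_{\alpha(e)}\circ\overline\alpha$ on $D_e$ was established explicitly in the proof of Theorem \ref{isom sfs for Br dgrms}.

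Finally, Theorem \ref{isomorphic reprs} applies verbatim with $\Lambda=E$, the systems $\{\sigma_e\}$, $\{\sigma'_{\alpha(e)}\}$, and the map $F:=\overline\alpha$ from $X_B$ onto $X_{B'}$. The operators (\ref{T_e formula}), (\ref{T'_e formula}) are the specializations of (\ref{T_i(m) formula}), (\ref{T_i(m') formula}) to this setting, and the intertwining unitary $U_{\overline\alpha} : L^2(X_{B'},m')\to L^2(X_B,m)$ built from $\overline\alpha$ via (\ref{U definition}) implements the unitary equivalence of $\pi$ and $\pi'_\alpha$. I do not anticipate any real obstacle, as the theorem is essentially a specialization of the abstract Theorem \ref{isomorphic reprs} to the Bratteli-diagram setting, and the only work is to certify that $\overline\alpha$ fits into the framework of Definition \ref{isomorphic sfs}, which is precisely what Theorem \ref{isom sfs for Br dgrms} provides.
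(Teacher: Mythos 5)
Your proposal is correct and follows essentially the same route as the paper: the authors themselves introduce this theorem with the remark that it is deduced from Theorems \ref{isomorphic reprs} and \ref{isom sfs for Br dgrms}, and their proof consists of defining exactly your intertwining isometry $V = U_{\overline\alpha}$ via $(V\xi)(x) = \bigl(\tfrac{dm'\circ\overline\alpha}{dm}(x)\bigr)^{1/2}\xi(\overline\alpha(x))$ and noting that the verification of $T_eV = VT'_{\alpha(e)}$ repeats the computation of Theorem \ref{isomorphic reprs}. The only cosmetic difference is that you invoke Theorem \ref{isomorphic reprs} as a black box after checking its hypotheses, whereas the paper restates the unitary and declares the computation "absolutely similar"; the substance is identical.
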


We remark that Theorem \ref{wt A repr generates A repr} (given below) allow us to conclude that not only the representations of  $\mathcal O_{\wt A}$ are equivalent but the representations of  $\mathcal O_A$ constructed by formulas (\ref{T_i defined by wt T_e}) are also equivalent.

\begin{proof}
The proof of this theorem is absolutely similar to that of Theorem
\ref{isomorphic reprs} so that we can omit the details. We define an isometric operator $V : L^2(X_{B'}, m') \to L^2(X_B, m)$ by
$$
(V\xi)(x) = \left(\frac{dm' \circ \overline \alpha  } { dm} (x) \right)^{1/2}\xi(\overline \alpha (x)).
$$
Then we can verify by direct computations that for every $e \in E$ and $\xi \in L^2(X_{B'}, m')$
$$
(T_eV\xi)(x) = (V T_{\alpha(e)}\xi)(x).
$$
This means that the representations $\pi$ and $\pi'_\alpha$ are equivalent.
\end{proof}

As was mentioned above, the proved theorem looks simpler when $B = B'$ and $\alpha : E \to E$ is an admissible map of the set $E$.

We give below an example of two 0-1 simple stationary Bratteli diagrams whose s.f.s. are isomorphic.

\begin{example}\label{two 4x4 matrices}
Let $B$ and $B'$ be Bratteli diagrams defined by matrices
$$
A = \left(
  \begin{array}{cccc}
    1 & 0 & 1 & 1 \\
    0 & 1 & 0 & 1 \\
    0 & 1 & 0 & 0 \\
    1 & 0 & 1 & 0 \\
  \end{array}
\right), \ \ \
A' = \left(
       \begin{array}{cccc}
         1 & 0 & 1 & 1 \\
         0 & 1 & 1 & 0 \\
         1 & 0 & 0 & 1 \\
         0 & 1 & 0 & 0 \\
       \end{array}
     \right)
$$
respectively. The sets of edges $E = \{e_1, ... , e_8\}$ and $E' = \{e'_1, ... , e'_8\}$ of $B$ and $B'$ are shown on the following figures:

$
\unitlength=1cm
\begin{graph}(4,6)
 \graphnodesize{0.2}
 %\roundnode(3,6)
 % \nodetext(-1,0){$V_0$}
 \roundnode{V11}(0.5,5)
 %\nodetext{V11}(-0.4,0){$V_1$}
 \roundnode{V12}(3,5)
 \roundnode{V13}(5.5,5)
\roundnode{V14}(8,5)
%first level
\roundnode{V21}(0.5,1.5)
 %\nodetext{V21}(-0.4,0){$V_2$}
 \roundnode{V22}(3,1.5)
 \roundnode{V23}(5.5,1.5)
\roundnode{V24}(8,1.5)
%Edges
\graphlinewidth{0.025}
%first vertex
\edge{V11}{V21}
\freetext(0.2,4.3){$e_1$}
\edge{V11}{V23}
\freetext(1.2,4.2){$e_2$}
\edge{V11}{V24}
\freetext(1.8,4.8){$e_3$}
%second vertex
\edge{V12}{V22}
\freetext(2.7,4.7){$e_5$}
\edge{V12}{V24}
\freetext(3.8,4.8){$e_4$}
%third vertex
\edge{V13}{V22}
\freetext(5,4.8){$e_6$}
%fourth vertex
\edge{V14}{V21}
\freetext(7.1,4.9){$e_8$}
\edge{V14}{V23}
\freetext(8,4.5){$e_7$}
\end{graph}
\mbox{Fig. 2: Edge set $E$ of $B$.}
$

and

$
\unitlength=1cm
\begin{graph}(4,6)
 \graphnodesize{0.2}
   \roundnode{V11}(0.5,5)
  \roundnode{V12}(3,5)
 \roundnode{V13}(5.5,5)
\roundnode{V14}(8,5)
%first level
\roundnode{V21}(0.5,1.5)
 %\nodetext{V21}(-0.4,0){$V_2$}
 \roundnode{V22}(3,1.5)
 \roundnode{V23}(5.5,1.5)
\roundnode{V24}(8,1.5)
%Edges
\graphlinewidth{0.025}
%first vertex
\edge{V11}{V21}
\freetext(0.2,4.3){$e'_1$}
\edge{V11}{V23}
\freetext(1,4.2){$e'_2$}
\edge{V11}{V24}
\freetext(1.8,4.8){$e'_3$}
%second vertex
\edge{V12}{V22}
\freetext(2.7,4.7){$e'_5$}
\edge{V12}{V23}
\freetext(3.6,4.8){$e'_4$}
%third vertex
\edge{V13}{V21}
\freetext(4.6, 4.8){$e'_6$}
\edge{V13}{V24}
\freetext(6.1, 4,8){$e'_7$}
%fourth vertex
\edge{V14}{V22}
%\dirbow{V22}{V22}(-10)
\freetext(7.2,4.9){$e'_8$}
\end{graph}
\mbox{Fig. 2: Edge set $E'$ of $B'$.}
$
\\

Define an admissible map $\alpha : E \to E'$ as follows.
$$
\alpha(e_1) = e'_1, \ \alpha(e_2) = e'_3, \ \alpha(e_3) = e'_2, \ \alpha(e_4) = e'_4,
$$
$$
\alpha(e_5) = e'_5,\  \alpha(e_6) = e'_8,\  \alpha(e_7) = e'_7, \ \alpha(e_8) = e'_6.
$$
It is straightforward to check that $\alpha \times \alpha (\mathcal L(E)) = \mathcal L(E')$. Therefore, one can apply Theorem \ref{isom sfs for Br dgrms} to obtain that the corresponding s.f.s. $\{\sigma_e : e \in E\}$ and $\{\sigma'_{\overline \alpha(e)} : e \in E\}$ are isomorphic.

\end{example}

Suppose that one has a 0-1 simple stationary Bratteli diagram $B$ whose transpose to the incidence matrix is $A$. Let the other conditions of Theorem \ref{equiv repr on BD} be satisfied. Then we can construct the unitarily equivalent representations $\wt\pi$ and $\wt\pi'$ (we changed the notation) of $\mathcal O_{\wt A}$ generated respectively by the s.f.s. $\{\sigma_e : e \in E\}$ and $\{\sigma_{\alpha(e)} : e \in E\}$. In the next statements we show that these representations define simultaneously some representations $\pi$ and $\pi'$ of $\mathcal O_A$ which are also unitarily equivalent (undefined symbols are taken from Theorem \ref{equiv repr on BD}).

\begin{thm}\label{wt A repr generates A repr} Suppose that $B = B(V,E)$ is a 0-1 simple stationary Bratteli diagram, $E$ is the edge set of $B$, $V$ is the vertex set, and $A$ is the corresponding 0-1 matrix. For the s.f.s. $\{\sigma_e\ : e \in E\}$, consider the representation $\wt\pi$ of $\wt A$ generated on $L^2(X, m)$ by the operators
\begin{equation}\label{wt T_e formula}
(\wt T_e\psi)(x) = \chi_{R_e}(x)\rho_m(\sigma (x), \sigma_e)^{-1/2} \psi(\sigma (x)),
\ e \in E, \ \psi\in L^2(X,m),
\end{equation}
\begin{equation}\label{wt T^*_e formula}
  (\wt T^*_e\xi)(x) = \chi_{D_e}(x) \rho_m(x, \sigma_e)^{1/2} \xi(\sigma_e (x)),  \ e\in E , \  \xi\in L^2(X, m).
\end{equation}
For each $i\in V$, define
\begin{equation}\label{T_i defined by wt T_e}
T_i = \sum_{e : s(e) = i} \wt T_e.
\end{equation}
Then the collection of operators $\{T_i : i \in V\}$ generates a representation of $\mathcal O_A$ on $L^2(X,m)$.
\end{thm}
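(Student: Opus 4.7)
The plan is to verify directly the two Cuntz-Krieger relations (\ref{CK relations}) for $\mathcal{O}_A$ by expanding $T_i = \sum_{e: s(e) = i} \wt T_e$ in terms of the partial isometries $\wt T_e$ generating $\wt\pi$, and reducing each expansion using two vanishing identities that follow from the formulas (\ref{wt T_e formula})--(\ref{wt T^*_e formula}) together with the 0-1 hypothesis on $B$.

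First, to show $\sum_i T_i T_i^* = 1$, I expand
$$T_i T_i^* = \sum_{e,f \,:\, s(e)=s(f)=i} \wt T_e \wt T_f^*.$$
The cross terms vanish: since $B$ is 0-1, $s(e)=s(f)$ and $e\neq f$ force $r(e)\neq r(f)$, so the sets $\wt D_e$ and $\wt D_f$ (each depending only on its range vertex) are disjoint. A short computation from (\ref{wt T_e formula}) and (\ref{wt T^*_e formula}) shows that $\wt T_e \wt T_f^*$ carries the factor $\chi_{\wt R_e}(x)\chi_{\wt D_f}(\sigma(x))$, and for $x \in \wt R_e$ one has $\sigma(x) \in \wt D_e$, so this factor vanishes identically. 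Hence $T_i T_i^* = \sum_{e: s(e)=i} \wt T_e \wt T_e^*$, the multiplication operator by $\chi_{R_i}$, where $R_i = \bigsqcup_{e: s(e) = i} \wt R_e$ is the vertex-indexed set from Remark \ref{discussion of the two exs of sfs}. Summing, $\sum_{i\in V} T_i T_i^* = \sum_i \chi_{R_i} = \chi_{X_B} = 1$.

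Second, to verify $T_i^* T_i = \sum_j a_{i,j}\, T_j T_j^*$, I expand
$$T_i^* T_i = \sum_{e,f \,:\, s(e)=s(f)=i} \wt T_e^* \wt T_f.$$
A direct computation with (\ref{wt T_e formula}) and (\ref{wt T^*_e formula}), using $\sigma\circ\sigma_e = \mathrm{id}_{\wt D_e}$, shows that $\wt T_e^* \wt T_f$ carries the factor $\chi_{\wt R_f}(\sigma_e(x))$, and since $(\sigma_e(x))_1 = e$ by construction, this vanishes whenever $e\neq f$. Consequently $T_i^* T_i = \sum_{e: s(e)=i} \wt T_e^* \wt T_e = \sum_{e: s(e)=i} \chi_{\wt D_e}$. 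In a 0-1 diagram the map $e\mapsto r(e)$ is a bijection from $\{e: s(e)=i\}$ onto $\{j\in V : a_{i,j}=1\}$, and $\wt D_e = \{y\in X_B : s(y_1)=r(e)\} = R_{r(e)}$. Therefore $T_i^* T_i = \sum_{j: a_{i,j}=1} \chi_{R_j} = \sum_{j\in V} a_{i,j}\, T_j T_j^*$, which is the second CK relation.

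The main technical obstacle is isolating and proving the two vanishing identities $\wt T_e \wt T_f^* = 0$ (when $s(e)=s(f)$, $e\neq f$) and $\wt T_e^* \wt T_f = 0$ (when $e\neq f$). Both are short direct calculations from (\ref{wt T_e formula})--(\ref{wt T^*_e formula}), but each rests crucially on the 0-1 hypothesis and on the explicit descriptions of $\wt R_e$ and $\wt D_e$. Once they are in hand, the partition identities $R_i = \bigsqcup_{e: s(e)=i} \wt R_e$ and $\wt D_e = R_{r(e)}$ from Remark \ref{discussion of the two exs of sfs} complete the verification of both Cuntz-Krieger relations for $\mathcal{O}_A$.
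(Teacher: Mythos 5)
Your proposal is correct and follows essentially the same route as the paper: expand $T_iT_i^*$ and $T_i^*T_i$ over the edges with source $i$, kill the cross terms $\wt T_e\wt T_f^*$ and $\wt T_e^*\wt T_f$ ($e\neq f$) via the 0-1 hypothesis and the explicit formulas, and identify the surviving diagonal sums. The only cosmetic difference is in the second relation, where the paper matches double sums using the $\mathcal O_{\wt A}$ relation $\wt T_e^*\wt T_e=\sum_f\wt a_{e,f}\wt T_f\wt T_f^*$ while you identify both sides directly as multiplication by the characteristic function of $D_i$ via $\wt D_e=R_{r(e)}$; the two bookkeepings are equivalent.
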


\begin{proof} To prove the result it suffices to check that
\begin{equation}\label{T_i satisfies CK}
     \sum_{i\in V} T_iT^*_i =1, \ \ \ \ \ T^*_iT_i = \sum_{j\in V} a_{i,j} T_jT^*_j
\end{equation}
assuming that the operators $\{\wt T_e : e \in E\}$ satisfy (\ref{T_i satisfies CK}) when the matrix $A$ is replaced by $\wt A$.

To show that the first relation in (\ref{T_i satisfies CK}) holds, we compute
\begin{eqnarray*}
    \sum_{i\in V} T_iT^*_i &=& \sum_{i\in V} (\sum_{ e : s(e) =i}\wt T_e) (\sum_{ e' : s(e') =i}\wt T^*_{e'})\\
 & = & \sum_{i\in V}(\sum_{ e : s(e) =i}\wt T_e\wt T^*_e)  +  \sum_{i\in V}(\sum_{ e \neq e' : s(e) =s(e') =i}\wt T_e \wt T^*_{e'}).
\end{eqnarray*}
Since
$$
\sum_{i\in V}(\sum_{ e : s(e) =i}\wt T_e\wt T^*_e) = \sum_{ e\in E} \wt T_e\wt T^*_e = 1,
$$
we need to verify only that $\wt T_e\wt T^*_{e'} =0$ for any edges $e \neq e'$ with $s(e) =s(e') = i$. Indeed, when we apply the formulas  (\ref{wt T_e formula}) and (\ref{wt T^*_e formula}) we obtain
$$
(\wt T_e\wt T^*_{e'} \psi)(x) = \chi_{R_e}(x)\chi_{D_{e'}}(\sigma (x)) \rho_m(\sigma (x), \sigma_e)^{-1/2}\rho_m( \sigma(x), \sigma_{e'})^{1/2} \psi (x).
$$
We claim that $\chi_{R_e}(x)\chi_{D_{e'}}(\sigma (x))  = 0 $ for all $x$. We note that $e\neq e'$ and $s(e) = s(e')$ implies $r(e) \neq r(e')$. If $x\in R_e$, then $x = (x_0, e, x_1, x_2, ...)$ and $\sigma(x) = (x'_0, x_1, x_2, ...)$ where $s(x_1) = r(e)$. On the other hand, $D_{e'} = \bigcup_{f: s(f) = r(e')} R_f$, and any point $y = (y_i) \in D_{e'}$ has the property $s(y_1) = r(e') \neq r(e)$. This proves the claim.

We will now prove that the second formula in (\ref{T_i satisfies CK}) is also true.
We have
$$
T^*_iT_i = \sum_{e : s(e) =i} \wt T^*_e \wt T_e + \sum_{e \neq e' : s(e) = s(e') = i}
\wt T^*_e \wt T_{e'} =  \sum_{e : s(e) =i} \wt T^*_e \wt T_e
$$
because the fact that $\wt T^*_e \wt T_e  = 0$ can be proved as above.
Next, since $ \wt T^*_e \wt T_e  = \sum_f  \wt a_{e,f}  \wt T_f \wt T^*_f$, we get
$$
T^*_iT_i  = \sum_{e : s(e) =i} \sum_{f : s(f) = r(e)}  \wt T_f \wt T^*_f
$$

On the other hand,
\begin{eqnarray*}
  \sum_{j \in V} a_{i,j} T_jT^*_j &=& \sum_{j \in V} a_{i,j} (\sum_{ f : s(f) =j}\wt T_f) (\sum_{ f' : s(f') =j}\wt T^*_{f'}) \\
  &=& \sum_{j \in V} a_{i,j} \sum_{ f : s(f) =j}\wt T_f\wt T^*_{f}
\end{eqnarray*}
We use now the correspondence $a_{i,j} \leftrightarrow (e, s(e) = i, r(e) = j)$, so that the latter has the form
$$
  \sum_{j \in V} a_{i,j} T_jT^*_j = \sum_{e : s(e) =i} \sum_{f : s(f) = j}  \wt T_f \wt T^*_f.
$$
This completes the proof of the proposition.
\end{proof}

The following corollary easily follows from the results proved above  and from relation (\ref{T_i defined by wt T_e}).

\begin{cor} Let $B = B(V,E)$, $A$, $\wt A$, and $\{\sigma_e : e\in A\}$ be as in Theorem \ref{wt A repr generates A repr}. For an admissible map $\alpha : E \to E$, let $\wt \pi$ and $\wt\pi_{\alpha}$ be the representations of $\mathcal O_{\wt A}$ defined by (\ref{T_e formula}) and (\ref{T'_e formula}). Then the unitary equivalence of the representations $\wt \pi$ and $\wt\pi_{\alpha}$  of $\mathcal O_{\wt A}$ implies the unitary equivalence of the corresponding representations of $\mathcal O_A$.
\end{cor}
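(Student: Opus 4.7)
The plan is to lift the given $\mathcal{O}_{\wt A}$-intertwiner to the $\mathcal{O}_A$ level by exploiting the linearity in the defining formula (\ref{T_i defined by wt T_e}). Let $W$ be a unitary implementing $\wt\pi \cong \wt\pi_{\alpha}$, so that $W\wt T'_{\alpha(e)} = \wt T_e W$ for every $e \in E$, where $\wt T_e$ and $\wt T'_{\alpha(e)}$ are the operators given by (\ref{T_e formula}) and (\ref{T'_e formula}). By Theorem \ref{wt A repr generates A repr} the $\mathcal{O}_A$-representations $\pi$ and $\pi_\alpha$ corresponding to $\wt\pi$ and $\wt\pi_\alpha$ assign
\begin{equation*}
\pi(S_i) = T_i = \sum_{e : s(e)=i} \wt T_e, \qquad \pi_\alpha(S_i) = T_i^\alpha := \sum_{e : s(e)=i} \wt T'_{\alpha(e)},
\end{equation*}
so the goal reduces to showing $W T_i^\alpha = T_i W$ for every vertex $i$.

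The one mildly nontrivial ingredient is that admissibility forces $\alpha$ to permute source-fibers. If $s(e) = s(f) = i$, I would pick any $g \in E$ with $r(g) = i$ (which exists by the Bratteli diagram axioms); the pairs $(g,e)$ and $(g,f)$ lie in $\mathcal{L}(E)$, so by admissibility $(\alpha(g), \alpha(e))$ and $(\alpha(g), \alpha(f))$ lie in $\mathcal{L}(E)$ too, giving $s(\alpha(e)) = r(\alpha(g)) = s(\alpha(f))$. This yields a well-defined permutation $\alpha^*$ of the vertex set with $s(\alpha(e)) = \alpha^*(s(e))$, so
\begin{equation*}
T_i^\alpha = \sum_{e : s(e)=i} \wt T'_{\alpha(e)} = \sum_{e' : s(e')=\alpha^*(i)} \wt T'_{e'} = T'_{\alpha^*(i)},
\end{equation*}
where $T'_j$ is the genuine $\mathcal{O}_A$-representation of the primed s.f.s.\ produced by Theorem \ref{wt A repr generates A repr}. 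An entirely analogous range-fiber argument shows $r(\alpha(e)) = \alpha^*(r(e))$ as well, so $\alpha^*$ is an automorphism of the matrix $A$; hence $\{T_i^\alpha\}$ satisfies the $A$-Cuntz-Krieger relations (as the composition of the $T'$-representation with the corresponding automorphism of $\mathcal{O}_A$).

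With $\pi_\alpha$ thus established as a bona fide representation of $\mathcal{O}_A$, the intertwining is a one-liner: summing the hypothesized identities over the source-fiber of $i$ yields
\begin{equation*}
W T_i^\alpha = \sum_{e : s(e)=i} W \wt T'_{\alpha(e)} = \sum_{e : s(e)=i} \wt T_e W = T_i W,
\end{equation*}
which completes the proof. The only step where the full strength of admissibility (as opposed to mere bijectivity of $\alpha$) does any real work is the source-fiber preservation of the second paragraph; this is precisely what makes $\pi_\alpha$ a well-defined $\mathcal{O}_A$-representation in the first place, and once it is in place the corollary is immediate.
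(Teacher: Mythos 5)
Your proof is correct and takes essentially the same route the paper intends: the corollary is obtained by summing the edge-level intertwining relations $W\wt T'_{\alpha(e)} = \wt T_e W$ over the source fibers $\{e : s(e)=i\}$ appearing in (\ref{T_i defined by wt T_e}), exactly as in your final display. Your extra observation that admissibility forces $\alpha$ to permute source and range fibers (so that the summed operators again satisfy the Cuntz--Krieger relations for $A$) is a detail the paper leaves implicit, and it is worth making explicit.
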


\section{Monic representations of $\mathcal O_A$}\label{section monic}

Let $\mathcal O_A$ be the Cuntz-Krieger algebra defined by its generators $S_1, ... , S_n$ according to relations (\ref{CK relations}). Here $A = (a_{i,j})$ is a 0-1 primitive matrix. We recall (see Subsection \ref{C-K algebra}) that  $\mathcal O_A$ contains the commutative subalgebra $\mathcal D_A$ generated by $\{S_IS^*_I : I = i_1\cdots i_k,  k \in \mathbb N\}$ which is isomorphic to $C(X_A)$ (or, equivalently, $C(X_B)$ where $B$ is the stationary Bratteli diagram with matrix $A$).

We define and study a class of representations of  $\mathcal O_A$ called monic representations. This class was originally considered in \cite{DJ14a} for representations of the Cuntz algebra $\mathcal O_N$.

\begin{definition}\label{definition monic repr}
We say that a representation $\pi$ of  $\mathcal O_A$ on a Hilbert space $\mathcal H$  is {\em monic} if there is a cyclic vector $\xi \in \mathcal H$ for the abelian subalgebra $\mathcal D_A$, i.e.,
$$
\overline{\mathrm{span}} \{ T_IT_I^* : I \  \mbox{finite\ word} \} =\mathcal H
$$
where $T_I = \pi(S_I)$.
\end{definition}

\begin{example}
(1) Let $B$ be a stationary 0-1 Bratteli diagram with matrix $A$, and let $\{\sigma _e : e \in E\}$ be the s.f.s. defined by $B$ as in Example \ref{example of s.f.s.}. We {\em claim} that the representation of $\mathcal O_{\wt A}$ defined in Theorem  \ref{wt A repr generates A repr} is monic. Indeed, it is not hard to verify that $\wt T_e\wt T^*_e$ is the projection on $L^2(X_B, m)$ given by multiplication by the characteristic function $\chi_{R_e},\ e \in E$. Then, for any finite path $w = (e_1, ... , e_k) \in E^*$, the projection $\wt T_w\wt T^*_w$ corresponds to the multiplication by the characteristic function $\chi_{[\overline w]}$ of the cylinder set $[\overline w] = [(e'_0, e_1, ... , e_k)]$. Since $C(X_B)$ is dense in $L^2(X_B, m)$, we see that $\{\wt T_e : e \in E\}$ satisfies Definition \ref{definition monic repr}.

It follows from Lemma \ref{isom comm algs} that the representation of $\mathcal O_A$ defined as in Theorem \ref{wt A repr generates A repr}  is also monic because the projection $T_iT_i^*$ corresponds to the multiplication by the characteristic function $\chi_{[i]}$ where $[i]$ denotes all paths in $X_A$ beginning at $i$.

(2) The definition of s.f.s. based on a topological Markov chain $X_A$ (see Example \ref{ex s.f.s. topol Markov chain}) gives us another example of a monic representation. Indeed, we again can use the formulas from Theorem\ref{wt A repr generates A repr} and show that the constant function that equals 1 is cyclic for the representation of $C(X_A)$.
\end{example}

\begin{definition}\label{def monic system} Let  $\{\sigma_i : i \in \Lambda\}$ be a s.f.s. on a probability measure space $(X,m)$ with $\sigma_i : D_i \to R_i$ and a coding map $\sigma$.  We say that a collection $(m, (f_i)_{i\in \Lambda})$ is a {\em monic system} if for any $i\in \Lambda$ one has $m\circ \sigma^{-1}_i << m$ and  \begin{equation}\label{def monic system derivative}
     \rho_m(x, \sigma_i^{-1}) = |f_i|^2
\end{equation}
for some functions $f_i \in L^2(X,m)$ such that $f_i \neq 0$ for $m$-a.e. $x \in R_i$. A monic system is called {\em nonnegative} if $f_i \geq 0$.
\end{definition}

\begin{remark}
Relation (\ref{def monic system derivative}) can be written also as follows
$$
\rho_m(\sigma x, \sigma_i)^{-1} = |f_i|^2.
$$
To see this, we use the formulas $\rho_m(\sigma^{-1}_i x, \sigma_i) \rho_m(x, \sigma_i^{-1}) = 1$ and $\sigma_i^{-1} x = \sigma x, x\in R_i$. This formula agrees with that used in the definition of $T_i$ and $T^*_i$ (see (\ref{T_i formula}) and (\ref{T^*_i formula})).
\end{remark}

\setcounter{footnote}{0}

\begin{lem}\label{repr gener monic system} Suppose that a saturated s.f.s. $\{\sigma_i : i \in \Lambda\}, \sigma_i : D_i \to R_i,$ defined on $(X,m)$ satisfies condition (C-K) from Definition \ref{s.f.s.}; and let $\sigma$ be a coding map for $\{\sigma_i : i \in \Lambda\}$. Then, for any monic system $(m, (f_i)_{i\in \Lambda})$, the operators
\begin{equation}\label{repr T_i by monic system}
     T_i f = f_i \cdot f\circ\sigma, \  \ f\in L^2(x,m), \  i\in \Lambda
\end{equation}
generates a representation of $\mathcal O_{A}$ on $L^2(X,m)$  where $A$ is defined in (\ref{C-K defines A})\footnote{Here we changed the notation  and used $A$ instead of $\wt A$.}.
\end{lem}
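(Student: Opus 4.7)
The plan is to verify the Cuntz--Krieger relations
$$
\sum_{i\in\Lambda} T_i T_i^* = 1, \qquad T_i^* T_i = \sum_{j\in\Lambda} a_{i,j}\, T_j T_j^*
$$
directly from the definition of $T_i$ in \eqref{repr T_i by monic system}, by showing that $T_iT_i^*$ is multiplication by $\chi_{R_i}$ and $T_i^*T_i$ is multiplication by $\chi_{D_i}$.

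First I would compute the adjoint $T_i^*$. For $f,g\in L^2(X,m)$,
$$
\langle T_i f,g\rangle \;=\; \int_X f_i(x)\, f(\sigma x)\, \overline{g(x)}\, dm(x).
$$
Because $f_i$ vanishes off $R_i$, the integral is actually over $R_i$; by Lemma~\ref{properties of sfs}(1), the restriction of $\sigma$ to $R_i$ equals $\sigma_i^{-1}$, so the change of variables $x=\sigma_i(y)$, $y\in D_i$, yields
$$
\langle T_i f,g\rangle \;=\; \int_{D_i} f_i(\sigma_i y)\, f(y)\, \overline{g(\sigma_i y)}\, \rho_m(y,\sigma_i)\, dm(y),
$$
so $(T_i^* g)(y) = \chi_{D_i}(y)\, \overline{f_i(\sigma_i y)}\, g(\sigma_i y)\, \rho_m(y,\sigma_i)$. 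The key identity is that $\rho_m(\sigma_i^{-1}x,\sigma_i)\,\rho_m(x,\sigma_i^{-1})=1$, which combined with \eqref{def monic system derivative} gives $\rho_m(y,\sigma_i)=|f_i(\sigma_i y)|^{-2}$ for $y\in D_i$ (this is where the nonvanishing of $f_i$ on $R_i$ is essential, and probably the one place one has to be careful about $m$-null sets).

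Next I would compute the two products. For $y\in D_i$,
$$
(T_i^* T_i f)(y) = \chi_{D_i}(y)\,|f_i(\sigma_i y)|^2\,\rho_m(y,\sigma_i)\, f(\sigma(\sigma_i y)) = \chi_{D_i}(y)\, f(y),
$$
using $\sigma\circ\sigma_i=\mathrm{id}$ on $D_i$, so $T_i^*T_i$ is multiplication by $\chi_{D_i}$. Similarly, for $x\in R_i$, we have $\sigma x\in D_i$ and $\sigma_i(\sigma x)=x$, whence
$$
(T_iT_i^* f)(x) = f_i(x)\, \overline{f_i(x)}\, f(x)\, \rho_m(\sigma x,\sigma_i) = |f_i(x)|^2\,|f_i(x)|^{-2}\, f(x) = f(x),
$$
while both sides vanish off $R_i$ (since $f_i$ does). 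Hence $T_iT_i^*$ is multiplication by $\chi_{R_i}$.

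Finally I would assemble the Cuntz--Krieger relations. By (i) of Definition~\ref{s.f.s.} the sets $\{R_i:i\in\Lambda\}$ partition $X$ up to $m$-null sets, so
$$
\sum_{i\in\Lambda} T_i T_i^* \;=\; \sum_{i\in\Lambda} \chi_{R_i} \;=\; 1.
$$
By the (C-K) condition and \eqref{C-K defines A}, $D_i = \bigcup_{j\in\Lambda_i}R_j$ up to measure zero, so $\chi_{D_i}=\sum_{j}a_{i,j}\chi_{R_j}$, which gives
$$
T_i^*T_i \;=\; \chi_{D_i} \;=\; \sum_{j\in\Lambda} a_{i,j}\, \chi_{R_j} \;=\; \sum_{j\in\Lambda} a_{i,j}\, T_jT_j^*.
$$
The hardest part is the adjoint computation: it requires juggling the Radon--Nikodym identity $\rho_m(y,\sigma_i)|f_i(\sigma_i y)|^2=1$ on $D_i$ and justifying that $f_i$ does not vanish where it must not, which is already built into Definition~\ref{def monic system}. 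Everything else is symbol manipulation combined with the partition and (C-K) hypotheses.
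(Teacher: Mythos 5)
Your proposal is correct and follows essentially the same route as the paper: compute $T_i^*$ by the change of variables $x=\sigma_i(y)$, use the identity $\rho_m(y,\sigma_i)\,|f_i(\sigma_i y)|^2=1$ (the paper packages this via the auxiliary function $g_i=f_i/|f_i|^2$) to show that $T_iT_i^*$ and $T_i^*T_i$ are multiplication by $\chi_{R_i}$ and $\chi_{D_i}$ respectively, and then read off the Cuntz--Krieger relations from the partition property and condition (C-K). No gaps.
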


The representation of $\mathcal O_{A}$ defined by $\{T_i : i\in \Lambda\}$ as in Lemma \ref{repr gener monic system} will be called {\em associated to a monic system}.

\begin{proof} This lemma can be proved in the same manner as \cite[Theorem 2.7]{DJ14a}. We first notice that $f_i(x) \neq 0 $ if and only if $x \in R_i = \sigma_i(D_i)$. Since $\sigma\circ \sigma_i =1$,  we observe that $T_i$ is a partial isometry:
$$
||T_if||^2 = \int_X |f_i|^2|f\circ \sigma|^2\; dm = \int_X |f\circ \sigma|^2 \;  d(m\sigma_i^{-1}) =  \int_X |f|^2 \; dm.
$$
Next, if $i \neq j$, then
$$
<T_i f, T_j g> = \int_X \overline f_i f_j \overline{ f\circ \sigma} (g\circ\sigma)\; dm = 0
$$
because $f_i$ and $f_j$ are supported by disjoint sets.

To find $T^*_i$, we define for each $i \in \Lambda$
\begin{equation}\label{def of g_i}
    g_i(x) = \frac{f_i(x)}{|f_i(x)|^2} \ \ \mbox{if \ $x\in R_i$},\ \ g_i(x) = 0 \ \ \mbox{if \ $x\notin R_i$}.
\end{equation}
Then
$$
<T^*_if, g> = \int_X \overline f T_ig \; dm = \int_X \overline f (g\circ \sigma) g_i |f_i|^2\; dm = \int_X \overline {(f\circ\sigma_i)} (g\circ \sigma_i)g \; dm,
$$
so that
$$
T^*_if = \overline {(g\circ \sigma_i)} (f\sigma_i).
$$
It follows from the proved formulas that $T_iT^*_i$ is the projection on $L^2(X,m)$ given by
\begin{equation}\label{T_iT^*_i is proj}
     T_iT^*_i f = f_i \overline{(g_i\circ \sigma_i\circ \sigma)} (f\circ \sigma_i\circ \sigma) = \chi_{R_i} f
\end{equation}
(we use here  (\ref{def of g_i}) and the relation $\sigma_i\circ \sigma(x) = x$ for $x\in R_i$).  Since $\bigcup_{i\in \Lambda} R_i = X$ and this union is disjoint, we have
$$
\sum_{i\in \Lambda} T_iT^*_i =1.
$$

To finish the proof, we check that for any $i\in \Lambda$
\begin{equation}\label{condition for T^*_iT_i}
     T^*_iT_i = \sum_{j\in \Lambda} a_{i,j} T_jT^*_j.
\end{equation}
One can show similarly to (\ref{T_iT^*_i is proj}) that for any $f \in L^2(X,m)$
$$
T^*_iT_i f = \chi_{D_i} f.
$$
It is clear that
$$
\sum_{j\in \Lambda} a_{i,j} T_jT^*_j f = \sum_{j\in \Lambda_i}  T_jT^*_j f= \sum_{j\in \Lambda_i} \chi_{R_j} f =  \chi_{D_i} f
$$
and therefore (\ref{condition for T^*_iT_i}) is proved.

\end{proof}

{\em Question:} Under what conditions on a s.f.s. $\{\sigma_i : i \in \Lambda\}$ the representation $T_i f = f_i \cdot f\circ\sigma$ is monic?
\\

We now consider a class of monic systems that is naturally related to $\mathcal O_A$. Let $X_A$ be the topological Markov chain, and let $\Sigma = \{\sigma_i : i =1, ... , n\}$ be the s.f.s. defined in Example \ref{ex s.f.s. topol Markov chain}. A monic system $(m, (f_i))$ on $X_A$ defined by $\Sigma$ is called {\em inherent}.

\begin{thm}\label{thm on inherent systems}
Let $\pi$ be a representation of $\mathcal O_A$ defined by a monic system according to (\ref{repr T_i by monic system}). Then $\pi$ is monic if and only if it is unitarily equivalent to a representation associated to an inherently  monic system.
\end{thm}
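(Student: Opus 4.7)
I would split the biconditional into the two directions. The easier direction is that any representation associated (in the sense of Lemma \ref{repr gener monic system}) to an inherent monic system $(\mu,(f_i))$ on $X_A$ is automatically monic: the operators $T_i$ from \eqref{repr T_i by monic system} satisfy $T_I T_I^* = M_{\chi_{[I]}}$, and finite linear combinations of characteristic functions of cylinder sets $[I]\subset X_A$ are uniformly dense in $C(X_A)$, hence $L^2$-dense in $L^2(X_A,\mu)$. Thus $\mathbb I$ is a cyclic vector for the image of $\mathcal D_A\cong C(X_A)$, which is Definition \ref{definition monic repr}. Unitary equivalence preserves cyclicity, so this half follows for any representation equivalent to one from an inherent monic system.

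For the forward direction, start with a monic representation $\pi$ of $\mathcal O_A$ on a Hilbert space $\mathcal H$ with cyclic vector $\xi$ for $\mathcal D_A$. Using the identification $\mathcal D_A\cong C(X_A)$ from Lemma \ref{isom comm algs}, define a Borel probability measure $\mu$ on $X_A$ via $\int_{X_A} h\, d\mu = \langle \xi,\pi(h)\xi\rangle$ for $h\in C(X_A)$. By the spectral theorem for abelian $C^*$-algebras, there is a unitary $U:\mathcal H\to L^2(X_A,\mu)$ with $U\xi=\mathbb I$ that intertwines $\pi|_{\mathcal D_A}$ with the multiplication representation $M:C(X_A)\to B(L^2(X_A,\mu))$. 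Put $\wt T_i := U\pi(S_i)U^*$ and $\wt f_i := \wt T_i\mathbb I \in L^2(X_A,\mu)$. The claim is that $(\mu,(\wt f_i))$ is an inherent monic system and that the associated operators from Lemma \ref{repr gener monic system} are exactly the $\wt T_i$.

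The main technical step --- and the part I expect to be the real obstacle --- is to show that the endomorphism of $\mathcal D_A$ given by $\alpha(d)=\sum_i S_i d S_i^*$ corresponds, under $\mathcal D_A\cong C(X_A)$, to composition with the shift $\sigma_A$ on $X_A$, and consequently that $\wt T_i M_g = M_{g\circ \sigma_A}\wt T_i$ on $L^2(X_A,\mu)$. This can be verified algebraically inside $\mathcal O_A$: for a cylinder projection $S_{j_1\cdots j_k}S_{j_1\cdots j_k}^*$ the Cuntz--Krieger relations give
\begin{equation*}
\sum_i S_i(S_{j_1\cdots j_k}S_{j_1\cdots j_k}^*)S_i^* \;=\; \sum_{i : a_{i,j_1}=1} S_{ij_1\cdots j_k}S_{ij_1\cdots j_k}^*,
\end{equation*}
which under the Gelfand identification is $\chi_{\sigma_A^{-1}[j_1\cdots j_k]} = \chi_{[j_1\cdots j_k]}\circ \sigma_A$. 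Evaluating the intertwining relation on the cyclic vector $\mathbb I$ then yields $\wt T_i g = \wt f_i\cdot (g\circ \sigma_A)$, which is precisely the form \eqref{repr T_i by monic system} for the inherent s.f.s.\ of Example \ref{ex s.f.s. topol Markov chain}. The remaining checks are routine bookkeeping: $\wt T_i\wt T_i^*=M_{\chi_{[i]}}$ forces $\wt f_i$ to be supported on $R_i^A=[i]$ and nonzero $\mu$-a.e.\ there (otherwise the range projection would be smaller), while comparing $\|\wt T_ih\|^2=\int |\wt f_i|^2|h\circ\sigma_A|^2 d\mu$ with $\langle h,\wt T_i^*\wt T_ih\rangle=\int_{D_i^A}|h|^2\,d\mu$, via the bijection $\sigma_A:[i]\to D_i^A$ with inverse $\sigma_i^A$, produces the Radon--Nikodym identity $\rho_\mu(x,(\sigma_i^A)^{-1})=|\wt f_i|^2$ required by Definition \ref{def monic system}. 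Thus $(\mu,(\wt f_i))$ is an inherent monic system and $U$ implements the claimed unitary equivalence.
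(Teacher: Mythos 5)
Your proposal is correct and follows essentially the same route as the paper: the easy direction via $T_IT_I^*=M_{\chi_{[I]}}$ and density of cylinder functions, and the converse by using the cyclic vector to produce the measure $\mu$ on $X_A$, transporting the $S_i$ to $L^2(X_A,\mu)$ via the (co)isometry built from $f\mapsto\pi(f)\xi$, setting $f_i=\wt T_i\mathbb I$, and extracting the covariance relation $T_i\pi(f)=\pi(f\circ\sigma)T_i$ by checking it on cylinder projections. Your algebraic verification of that covariance relation and of the Radon--Nikodym identity is just a slightly more explicit rendering of the steps the paper cites from \cite{DJ14a}.
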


\begin{proof}
The fact that $\pi$ is a representation of $\mathcal O_A$ is proved in Lemma \ref{repr gener monic system}. Suppose that $\pi$ is generated by $\{T_i : i =1, ... , n\}$ and acts on $L^2(X_A, \mu)$. Assume that $\pi$ is associated to an inherent  monic system $\Sigma = \{\sigma_i : i =1, ... , n\}$. By direct computation we can show that the projection $T_IT^*_I$ is a multiplication by the cylinder set $\chi_{[I]}(x)$ where $I$ is any finite word on the alphabet $(1, ... , n)$. It suffices to note that the function $f \equiv 1$ is cyclic for $D_A$.

To prove the converse, we will follow \cite{DJ14a}. If $\pi$ is monic on $\mathcal H$, then there exists a cyclic vector $\xi \in \mathcal H$ for $\mathcal D_A$. It follows from the isomorphism of $\mathcal D_A$ and $C(X_A)$ that there exists a Borel measure $\mu$ on $X_A$ such that
$$
<\xi, \pi(f)\xi> \ = \int_{X_A}\: f\; d\mu,\ \ f \in C(X_A).
$$
Moreover, $\pi(\mathcal D_A)$ is unitarily equivalent to the representation of $C(X_A)$ acting on $L^2(X,\mu)$ by multiplication operators.

Consider the map $W : C(X_A) \to \mathcal H$ defined by the relation $W(f) = \pi(f) \xi$. Clearly, $W$ is linear and isometric, so that $W$ can be extended to an isometry from $L^2(X_A, \mu)$ to $\mathcal H$. We notice that this isometry is onto because $\pi$ is monic.

We now define the operators $\widehat T_i := W^*T_iW \ (i =1, ... , n)$ acting on $L^2(X_A, \mu)$. We check that $\widehat T_i$ satisfy relation (\ref{repr T_i by monic system}). Set up $f_i = \widehat T_i 1$. In what follows we will use the relations
$$
T_i^* \pi(f) T_i = \pi(f\circ\sigma_i), \ \ \  T_i\pi(f)  = \pi(f\circ\sigma)T_i
$$
which can be verified first on characteristic functions of cylinder sets.

We have
$$
\int_{X_A}|f_i|^2 f \; d\mu = < \wt T_i, f \wt T_i>_{L^2(X,\mu)} = <T_i \xi,
\pi(f) T_i\xi>_{\mathcal H}
$$
$$
= <\xi , \pi(f\circ \sigma_i)\xi>_{\mathcal H} = \int_{X_A} f\circ \sigma_i \; d\mu.
$$
Hence, we deduce that $\mu\circ \sigma_i^{-1} << \mu$ and $\dfrac{d\mu\circ \sigma_i^{-1}}{d\mu} = |f_i|^2$.

Next, if $f\in C(X_A)$, then
$$
\wt T_i f = W^*T_i \pi(f)\xi = W^*\pi(f\circ\sigma)T_i\xi = W^*\pi(f\circ\sigma)WW^*T_iW1 = (f\circ\sigma)f_i.
$$
This proves (\ref{repr T_i by monic system}). The relation $\sum_i T_iT^*_i = 1$ implies that the support of $f_i$ is the set $R_i= \{ (x_k) \in X_A : x_0 = i\}$.
\end{proof}

\begin{cor}
Let $(m, (f_i))$ and $(m', (f'_i))$ be two inherent monic systems on $X_A$. Then the representations of $\mathcal O_A$ associated to these monic systems are  equivalent if and only if the measures $m$ and $m'$ are equivalent, and there exists a function $h$ on $X_A$ such that
$$
\frac{dm'}{dm}(x) = |h(x)|^2, \ \ \ f'_i = h\circ \sigma f_i h^{-1},\  i =1, ... , n.
$$
\end{cor}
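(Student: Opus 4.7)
The two representations act on $L^2(X_A,m)$ and $L^2(X_A,m')$ by $T_if = f_i\cdot(f\circ\sigma)$ and $T'_if = f'_i\cdot(f\circ\sigma)$ respectively. By the proof of Theorem~\ref{thm on inherent systems}, the constant function $\mathbb{I}\equiv 1$ is cyclic for $\mathcal D_A$ in both, since $\pi(\mathcal D_A)$ and $\pi'(\mathcal D_A)$ act by multiplication operators ($T_IT_I^*$ is multiplication by $\chi_{[I]}$). My plan is to treat the two implications separately, realising the intertwining unitary in each case as multiplication by the function $h$.

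For the ``if'' direction I would guess the intertwiner and verify it works: define $U:L^2(X_A,m')\to L^2(X_A,m)$ by $Uf = hf$. The identity $|h|^2=dm'/dm$ makes $U$ isometric by a one-line change of variables, and $h\neq 0$ $m$-a.e.\ (forced by $m\sim m'$) makes $U$ onto. The intertwining $UT'_i=T_iU$ then follows from the direct computation
$$
(UT'_if)(x)=h(x)f'_i(x)f(\sigma x),\qquad (T_iUf)(x)=f_i(x)h(\sigma x)f(\sigma x),
$$
which reduces the required identity to precisely the hypothesis $f'_i=(h\circ\sigma)\,f_i\,h^{-1}$.

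For the ``only if'' direction, let $V:L^2(X_A,m')\to L^2(X_A,m)$ be a unitary intertwining $\pi'$ with $\pi$. I would define $h:=V\mathbb{I}$ and exploit monicity to show that $V$ itself is multiplication by $h$. Indeed, the intertwining on the abelian subalgebra $\mathcal D_A\cong C(X_A)$ gives, for every $g\in C(X_A)$,
$$
V(g)=V\pi'(g)\mathbb{I}=\pi(g)V\mathbb{I}=g\cdot h,
$$
so by density $Vf=hf$ for every $f\in L^2(X_A,m')$. Isometry of $V$ forces $dm'/dm=|h|^2$, and surjectivity forces $h\neq 0$ $m$-a.e.; together these yield $m\sim m'$. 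Finally, applying $Vf=hf$ on both sides of $VT'_i=T_iV$ and comparing the resulting multiplication operators gives $hf'_i=f_i(h\circ\sigma)$ $m$-a.e., which rearranges to the desired formula $f'_i=(h\circ\sigma)\,f_i\,h^{-1}$.

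The one delicate point I expect is the passage from ``isometric'' to full equivalence of measures: reading off $m\sim m'$ together with the pointwise nonvanishing of $h$ requires the \emph{surjectivity} of $V$, since otherwise the characteristic function of the zero set of $h$ would furnish a vector in $L^2(X_A,m)$ outside the range of multiplication by $h$. Once this is secured, the inversion $h^{-1}$ and the reverse domination $m\ll m'$ are justified, and the remainder of the argument is routine bookkeeping with Radon--Nikodym derivatives and the composition $f\mapsto f\circ\sigma$.
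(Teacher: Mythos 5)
Your proof is correct and is essentially the argument the paper intends: the paper simply defers to \cite[Theorem 2.9]{DJ14a}, whose proof likewise realizes the intertwiner as multiplication by $h=V\mathbb{I}$ using cyclicity of the constant function for $\mathcal D_A$, reads off $dm'/dm=|h|^2$ from isometry and the nonvanishing of $h$ from surjectivity, and extracts $f'_i=(h\circ\sigma)f_i h^{-1}$ from the intertwining relation. No gaps.
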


The {\em proof} is the same as that of \cite[Theorem 2.9]{DJ14a}.

\bibliographystyle{plain}

\begin{thebibliography}{99}

\bibitem[Ak12]{Ak12}
H. Akin, \textit{An upper bound of the directional entropy with respect to the
              Markov measures}, ``Internat. J. Bifur. Chaos Appl. Sci. Engrg.'', \textbf{22}  (2012), no.11, 1250263, 6 pp.


\bibitem[BKM09]{BKM09}
S. Bezuglyi, J. Kwiatkowski, and K. Medynets, \textit{Aperiodic substitutional systems and their Bratteli diagrams},  ``Ergodic Theory Dynam. Syst.'', \textbf{29} (2009), 37 - 72.

\bibitem[BKMS10]{BKMS10}
S. Bezuglyi, J. Kwiatkowski, K. Medynets, and B. Solomyak \textit{ Invariant measures on stationary Bratteli diagrams},  ``Ergodic Theory Dynam. Syst.'', \textbf{30} (2010), 973 - 1007.

\bibitem[BJ14]{BJ14}
S. Bezuglyi, P. Jorgensen, \textit{Markov measures on Bratteli diagrams}, in preparation.

\bibitem[BrRo81]{BrRo81}
O. Bratteli, and D.W. Robinson, \textit{Operator Algebras and Quantum-statistical Mechanics. II}, Springer-Verlag, New York, 1981, xiii+505 p.

\bibitem[BrJKR00]{BrJKR00}
O. Bratteli, P.E.T. Jorgensen, K.H. Kim, and F. Roush,
{Non-stationarity of isomorphism between {AF} algebras defined
              by stationary {B}ratteli diagrams},
``Ergodic Theory Dynam. Syst.'' \textbf{20} (2000), 1639--1656.


\bibitem[BrJKR02a]{BrJKR02a}
O. Bratteli, P.E.T. Jorgensen, K.H. Kim, and F. Roush,
{\em Computation of isomorphism invariants for stationary dimension
              groups}, ``Ergodic Theory Dynam. Syst.'' \textbf{22} (2002), 99--127.


\bibitem[BrJKR02b]{BrJKR02b}
O. Bratteli, P.E.T. Jorgensen, K.H. Kim, and F. Roush,
{\em Corrigendum to the paper: ``Decidability of the isomorphism
         problem for stationary $AF$-algebras and the associat ordered simple dimension groups''}, ``Ergodic Theory Dynam. Syst.'' \textbf{22} (2002), 633.


\bibitem[BrJO2004]{BrJO2004}
O. Bratteli, P.E.T. Jorgensen, and V. Ostrovskyi, {\em Representation theory and numerical $AF$-invariants. The representations and centralizers of certain states on $\mathcal O_d$}, ``Mem. Amer. Math. Soc.'', 168 (2004), xviii+178 p.

\bibitem[Cu77]{Cu77}
J. Cuntz, \textit{Simple {$C\sp*$}-algebras generated by isometries}, ``Comm. Math. Phys.'', \textbf{57} (1977), 173--185.


\bibitem[Cu79]{Cu79}
J. Cuntz, \textit{Noncommutative Haar measure and algebraic finiteness
              conditions for simple $C^{\ast}$-algebras}, In: ``Alg\`ebres d'op\'erateurs et leurs applications en physique  math\'ematique'', Colloq. Internat. CNRS (1979), 274, 113 - 133.


\bibitem[CuKr80]{CuKr80} J. Cuntz, W. Krieger, \textit{A class of $C^*$-algebras
and topological Markov chains}, ``Inv. math.'' \textbf{56} (1980), 251 - 268.

\bibitem[DHS99]{DHS99}
F. Durand, B. Host, C. Skau, \textit{Substitution dynamical systems, Bratteli diagrams and dimension groups}, ``Ergodic Theory Dynam. Syst.'', \textbf{19} (1999), 953-993


\bibitem[DJ12]{DJ12}
D. E. Dutkay, P. E. T. Jorgensen, \textit{Spectral measures and {C}untz algebras},
``Math. Comp.'', \textbf{81} (2012), 2275--2301.

\bibitem[DPS14]{DPS14} D.E. Dutkay, G. Picioroaga, and M.-S. Song,
\textit{Orthonormal bases generated by {C}untz algebras}, ``J. Math. Anal. Appl.''
\textbf{409} (2014), 1128--1139.


\bibitem[DJ14a]{DJ14a} D.E. Dutkay, P.E.T. Jorgensen, \textit{Monic representations of the Cuntz algebra and Markov measures}, http://arxiv.org/abs/1401.2186.

\bibitem[DJ14b]{DJ14b} D.E. Dutkay, P.E.T. Jorgensen, \textit{Representations  of Cuntz algebras associated to quasi-invariant Markov measures},  http://arxiv.org/abs/1401.7582.

\bibitem[Ell89]{Ell89}
G.A. Elliott, {\em Book review: Partially ordered abelian groups with
           interpolation}, ``Bull. Amer. Math. Soc. (N.S.)'', \textbf{21} (1989), 200 - 204.

\bibitem[EllMu93]{EllMu93}
G.A. Elliott, D. Mundici, {\em A characterization of lattice-ordered abelian groups},
``Math. Z.'', \textbf{213} (1993), 179--185.

\bibitem[F97]{F97} A. Forrest, \textit{ $K$-groups associated with substitution     minimal systems},  ``Isr. J. Math.'', \textbf{98} (1997), 101 -- 139.

\bibitem[GPS95]{GPS95}
T. Giordano, I. Putnam, C. Skau, \textit{Topological orbit equivalence and
$C^*$-crossed products}, ``J. Reine Angew. Math.'', \textbf{469} (1995), 51-111.

\bibitem[HPS92]{HPS92}
R.H.Herman, I.Putnam and C. Skau, \textit{Ordered Bratteli diagrams, dimension groups and topological dynamics}, ``Internat.J.Math.'' \textbf{3} (1992), 827--864.


\bibitem[Hi80]{Hi80}
T. Hida, \textit{Brownian Motion}, Applications of Mathematics, 11, Springer-Verlag,
New York, 1980, xvi+325 p.

\bibitem[JP12]{JP12}
P. E. T. Jorgensen,  and A. M. Paolucci, \textit{Markov measures and extended zeta functions}, ``J. Appl. Math. Comput.'', \textbf{38} (2012), 305 - 323.

\bibitem[Ka48]{Ka48}
S. Kakutani, \textit{On equivalence of infinite product measures}, ``Ann. of Math. (2)'', \textbf{49} (1948), 214 - 224.


\bibitem[MaPa11]{MaPa11} M. Marcolli, A.M. Paolucci, \textit{Cuntz-Krieger algebras and wavelets on fractals},  ``Complex Anal. Oper. Theory'' \textbf{5} (2011),  41�81.

\bibitem[Me06]{Me06} K. Medynets,
\textit{Cantor aperiodic systems and Bratteli diagrams}.
``C. R., Math., Acad. Sci. Paris'', \textbf{342}, (2006), 43--46.

\bibitem[SG77]{SG77} Shilov, G. E., Gurevich, B. L. \textit{Integral, measure and derivative: a unified approach}. Translated from the Russian and edited by Richard A. Silverman. Dover Publications, Inc., New York, 1977. xiv + 233 pp.



\bibitem[Sk97]{Sk97}
C. Skau, \textit{Orbit structure of topological dynamical systems and its
              invariants}, In: ``Operator Algebras and Quantum Field Theory'', Int. Press, Cambridge, !997, 533 - 544.



\end{thebibliography}

\end{document}